\documentclass{article}

\usepackage[utf8]{inputenc} 
\usepackage[T1]{fontenc}    
\usepackage{hyperref}       
\usepackage{url}            
\usepackage{booktabs}       
\usepackage{amsfonts}       
\usepackage{nicefrac}       
\usepackage{microtype}      
\usepackage{amsmath}
\usepackage{amsthm}
\usepackage[group-separator={,}]{siunitx}
\usepackage{thmtools}
\usepackage{thm-restate}
\usepackage{subcaption}

\usepackage{mathtools}

\usepackage{float}
\usepackage{xcolor}
\usepackage[inline]{enumitem}
\usepackage{xr}
\usepackage{cuted}
\usepackage{amssymb}
\usepackage[numbers]{natbib}
\usepackage{authblk}
\usepackage{enumitem}

\usepackage[margin=1in]{geometry}

\usepackage{algorithm}
\usepackage{algorithmic}

\usepackage{todonotes}



\newtheorem{theorem}{Theorem}[section]

\newtheorem{lemma}[theorem]{Lemma}
\newtheorem{corollary}[theorem]{Corollary}
\newtheorem{assumption}[theorem]{Assumption}

\newtheorem{example}{Example}[section]
\newtheorem{remark}{Remark}

\newcommand{\mX}{\mathcal{X}}
\newcommand{\mY}{\mathcal{Y}}
\newcommand{\mP}{\mathcal{P}}
\newcommand{\mS}{\mathcal{S}}
\newcommand{\R}{\mathbb{R}}
\newcommand{\N}{\mathbb{N}}
\newcommand{\E}{\mathbb{E}}
\newcommand{\tS}{\tilde{\mS}}
\newcommand{\tP}{\tilde{\mP}}

\newcommand{\scen}[1]{s_{#1}}
\newcommand{\newscen}[1]{\tilde{s}_{#1}}
\newcommand{\prob}{\mathbb{P}}
\newcommand{\tx}{\tilde{x}}

\DeclareMathOperator*{\argmax}{arg\,max}


\title{Scenario Reduction for Distributionally Robust Optimization}

\author[1]{Kevin-Martin Aigner}
\author[1]{Sebastian Denzler\footnote{Corresponding author. E-mail address: \href{mailto:sebastian.denzler@fau.de}{sebastian.denzler@fau.de}}}
\author[1]{Frauke Liers}
\author[2]{Sebastian Pokutta}
\author[3]{Kartikey Sharma}

\affil[1]{Friedrich-Alexander-Universität Erlangen-Nürnberg, Germany}
\affil[2]{Zuse Institute Berlin, Germany}
\affil[3]{Indian Institute of Technology Delhi, India}


\begin{document}
\maketitle

\begin{abstract}
Stochastic and (distributionally) robust optimization problems often become computationally challenging as the number of scenarios or data points increases.
Scenario reduction is therefore a key technique for improving tractability.
We introduce a general scenario reduction method for distributionally robust optimization (DRO), which includes stochastic and robust optimization as special cases.
Our approach constructs the reduced DRO problem by projecting the original ambiguity set onto a reduced set of scenarios.
Under mild conditions, we establish bounds on the relative quality of the reduction.  
The methodology is applicable to random variables following either discrete or continuous probability distributions, with representative scenarios appropriately selected in both cases.
Given the relevance of optimization problems with linear and quadratic objectives, we further refine our approach for these settings.  
Finally, we demonstrate its effectiveness through numerical experiments on mixed-integer benchmark instances from MIPLIB and portfolio optimization problems.  
Our results show that the proposed approximation significantly reduces solution time while maintaining high solution quality with only minor errors.
\end{abstract}
\paragraph{Keywords:} distributionally robust optimization; scenario reduction; scenario clustering; approximation bounds; mixed-integer programming; portfolio optimization

\section{Introduction}
\label{sec:introduction}


Uncertainties can significantly impact both the feasibility and optimality of solutions to optimization problems.
To ensure that solutions remain effective under real-world conditions, it is essential to incorporate strategies that account for parameter uncertainties.
Two primary approaches for addressing uncertainty in optimization problems are \emph{Stochastic Optimization (SO)} and \emph{Robust Optimization (RO)}.
In SO, uncertain parameters are modeled as random variables, and solution methods typically rely on knowledge of their probability distributions.
Traditionally, SO seeks solutions that are optimal in terms of expected outcomes or, more broadly, in relation to chance constraints or various risk measures, see~\cite{BirgeLouveaux1997}.
Robust Optimization (RO) is generally applied when the probability distribution of uncertain parameters is unknown or when a stronger guarantee of feasibility is required~\cite{Ben-TalElGhaouiNemirovski2009, robust_bertsimas_hertog}.
It aims to find solutions that perform optimally against worst-case realizations of uncertain parameters selected from a predefined uncertainty set.

In practical SO problems, information about the underlying probability distribution is often limited.
Recent research has focused on developing approaches that combine the advantages of both SO and RO to efficiently achieve high-quality protection against uncertainty. 
Specifically, \emph{Distributionally Robust Optimization (DRO)} seeks to solve a ``robust'' version of a stochastic optimization problem when there is incomplete knowledge of the probability distribution~\cite{DRO_book}.
The advantage of DRO lies in its ability to protect solutions against relevant uncertainties while avoiding the overly conservative incorporation of irrelevant uncertainties sometimes observed in classic robust approaches.
The articles~\cite{drosurvey1, drosurvey2} provide a comprehensive overview of DRO literature and we refer the interested reader to those for an in-depth exposition.

The size and complexity of optimization problems under uncertainty
increase with the number of scenarios or data points, resulting in long solution times.
To manage computational complexity, scenario reduction techniques are often applied.
These methods aim to reduce the number of scenarios while retaining the essential information about the uncertainty, thereby making the optimization problem more tractable without significantly compromising solution quality.
As a well-established approach, scenario reduction is applied in stochastic and robust optimization~\citep{goerigk_scen_reduction, roemisch-scenredSO}.
In this paper, we address the challenge of reducing the scenario set
size in DRO for discrete and continuous uncertainties with a
monotonically homogeneous uncertain objective function. 
To achieve this, we select a set of representative scenarios and formulate an approximate DRO problem.
This approach generates solutions that remain well-protected against uncertainty while still delivering reliable results for the original problem.
The main idea is to use clustering methods in order to aggregate similar scenarios.
Specifically, we perform scenario clustering in two ways: first, by solving a mixed-integer optimization problem that minimizes the clustering approximation factor, and second, using the well-known $k$-means algorithm.
We refine and extend our approach to address problems with quadratic objectives, where the clustering task is formulated as a mixed-integer semidefinite program.
To construct the approximated DRO problem, we aggregate the probabilities in the ambiguity set according to their representative scenarios.
Computational experiments on benchmark instances from the MIPLIB and portfolio optimization problems demonstrate that our scenario reduction typically induces small errors only.
In instances of non-linear scenario dependence, our method demonstrates superior accuracy in approximating the objective function.
Additionally, the effectiveness of $k$-means as a fast clustering method is shown by achieving similar approximation guarantees for the representative scenarios 
with significantly faster computation.

\paragraph*{Problem setting}

We consider the problem of minimizing an uncertain objective function
\(f \colon (x,s) \in \R^n \times \R^m \mapsto f(x,s)\in \R\) with
\(n,m\in \N\) over a feasible set \(\mX\subseteq\R^n\) that is allowed
to contain both discrete as well as continuous variables.
The objective depends on the decision variable \(x\in \mX\) 
and a random vector \(s\in\mS\) with realizations inside the scenario set \(\mS\subseteq\R^m\).

Given a probability distribution \(\prob\) of the uncertain parameter
\(s\) with support \(\mS\), the stochastic program reads
\begin{equation}
	\inf_{x\in\mX} \E_{s\sim \prob} \left[f(x,s)\right],  \label{Eq:SO}\tag{SO}
\end{equation}
where \(\E_{s\sim \prob}\left[f(x,s)\right]=\int_{s\in\mS} f(x,s) \prob(\text{d}s)\) denotes the expected value for the given probability distribution~\(\prob\) over~\(s\).
Solving the stochastic optimization problem~\eqref{Eq:SO} requires knowledge about the underlying probability distribution of the random vector \(s\).
In many settings, such information may not be available or may be incomplete. 
A robust protection of~\eqref{Eq:SO} against such incomplete knowledge about the probability distribution can be achieved by the distributionally robust optimization problem
\begin{equation}
	\inf_{x\in\mX} \sup_{\prob \in \mP}  \E_{s\sim \prob} \left[f(x,s)\right],  \label{Eq:DRO}	\tag{DRO}
\end{equation}
where \(\mP\) is an ambiguity set of probability distributions.
Our goal with solving the DRO problem is to minimize the worst-case expected cost over all distributions in the ambiguity set \(\mP\). 
The key challenge in solving problem~\eqref{Eq:DRO} lies in developing
algorithmically tractable robust counterparts. Their computational
complexity typically depends not only on the difficulty of optimizing for $x$ but also 
strongly depends on the difficulty of optimizing over \(\mP\). 
In the following, we will differentiate between continuous and
discrete probability distributions.

It is worth mentioning that distributional robustness includes both
stochastic as well as classical robust optimization as a special case.
Indeed, if the ambiguity set \(\mP\) equals the set of all possible probability distributions of \(s\) over \(\mS\), the DRO problem~\eqref{Eq:DRO} is equivalent to the strict robust optimization problem
\begin{equation}
	\min_{x\in\mX} \max_{s\in\mS} f(x,s).
\end{equation}
For discrete random variables, the maximum size ambiguity set is the probability simplex $\mP=\{ p \in [0,1]^{|\mS|}: \sum_{k=1}^{|\mS|} p_k = 1\}$.
If the ambiguity set consists only of one single probability distribution \(\mP=\{\prob^*\}\), the DRO problem~\eqref{Eq:DRO} is equal to the stochastic problem~\eqref{Eq:SO} with respect to the distribution \(\prob^*\).


There are various ways of constructing an ambiguity set to robustify
stochastic programs like~\eqref{Eq:SO} in the literature to get an
algorithmically tractable reformulation of the distributionally robust
counterpart~\eqref{Eq:DRO}. 

\paragraph*{Motivation for scenario reduction}

An example of a data-driven ambiguity set \(\mP\) for a discrete random variable with finite support \(s\in \mS\coloneqq \{\scen{1},\ldots,\scen{|\mS|}\}\subseteq \R^{n}\), for \(n\in\N\), using box ambiguity sets with confidence intervals is of the form
	$\mP \coloneqq \left\{ p\in \left[ 0,1\right]^{|\mS|} : l\leq p \leq u \text{, } \sum_{k=1}^{|\mS|} p_k=1 \right\}$.
Here, \(l, u \in [0,1]^{|\mS|}\) are the lower and upper bounds of the confidence intervals.
This ambiguity set is also considered in~\cite{DRO_over_time}.

In this case, the distributionally robust counterpart~\eqref{Eq:DRO} can be reformulated, as the inner maximization problem forms a linear program. Then, by strong duality,~\eqref{Eq:DRO} is equivalent to
\begin{subequations}\label{Eq:DRO_reformulated_box}
	\begin{align}
		\min_{x,z,\lambda,\mu} \quad &z - l^\top \lambda + u^\top \mu
		\label{Eq:DRO_reformulated_box:a} \\
		\text{s.t. } \quad  & z- \lambda_k + \mu_k \geq f(x,\scen{k}) \quad \forall k=1,\ldots,|\mS|,
		\label{Eq:DRO_reformulated_box:b} \\
		&\lambda,\mu\geq 0		\label{Eq:DRO_reformulated_box:c} \\
		&x\in\mX,\text{ }z\in\R,\text{ }\lambda,\mu \in\R^{|\mS|}.
		\label{Eq:DRO_reformulated_box:d}
	\end{align}
\end{subequations}
Here, the dual variables \(\lambda_k\) and \(\mu_k\) price the ambiguity for scenario \(\scen{k}\in\mS\). 
This problem is of the same problem class as~\eqref{Eq:SO}; however, it is larger in size.
The reformulated problem grows linearly with the number of scenarios
that may become prohibitive if the cardinality of S is large.
Thus, the difficulty of solving~\eqref{Eq:DRO_reformulated_box} depends on the complexity of \(f\) and the cardinality of \(\mS\).
Reducing the scenario set leads to smaller problems that remain
algorithmically tractable.
\noeqref{Eq:DRO_reformulated_box:a,Eq:DRO_reformulated_box:b,Eq:DRO_reformulated_box:c,Eq:DRO_reformulated_box:d}

\subsection{Related Work}

%


Scenario reduction is a key technique used in reducing the complexity and the size of stochastic programming problems~\cite{roemisch-scenredSO}.
A popular approach is to generate new scenarios and assigned probabilities in order to minimize their distance to the original probability like in~\cite{dupavcova2003scenario, henrion2009scenario}.
This approach focuses on the uncertainty in the underlying probability distribution.
In multistage settings, stochastic processes have been approximated by scenario trees~\cite{hochreiter2007financial, pflug2001scenario}.
More recently, a lot of attention has also been paid to problem-based scenario reduction methods where the optimization objective itself is leveraged in the construction of the metrics used to simplify scenarios~\cite{bertsimas2023optimization,keutchayan2023problem, problem_driven_scen_reduction, zhang2023optimized}. 
For this purpose, a variety of techniques have been developed to aggregate scenarios and simplify scenario trees, such as clustering~\cite{kammammettu2023scenario}, moment matching~\cite{carino1998concepts}, objective approximation~\cite{zenios1993constructing}, etc.
Machine learning techniques that attempt to learn scenario representations have also become popular~\cite{wu2022learning}.
Some papers have primarily focused on ineffective scenarios, which are scenarios whose removal does not affect the risk-averse objective function~\cite{arpon2018scenario}, or have aimed to incorporate the risk measure into the distance metric used for aggregation~\cite{pineda2010scenario}.
However, these methods are not directly applicable to the DRO case, as they typically assume a given probability distribution over the original scenarios.
In the DRO modeling approach, there is an entire set of possible
probability distributions that has to be taken into account.
It is very relevant whenever computationally efficient approaches
shall be implemented in challenging applications under uncertainty,
for example in electricity networks~\cite{aigner_curtailment, aigner_dvine}.

Our work shares similarities with scenario reduction methods as they were developed for
robust optimization in~\cite{goerigk2019representative, goerigk_scen_reduction}.
The authors propose models to reduce the number of scenarios for one- and two-stage robust optimization problems with discrete uncertainty in the objective function.
The reduction is done in a way such that an approximation quality holds.
The recent work~\cite{goerigk_new} improves on that by also taking the feasible set into consideration when reducing the scenarios.

There has been limited research addressing reduction methods for DRO.
Similarly to~\cite{arpon2018scenario}, ineffective scenarios can be identified for ambiguity sets~\cite{rahimian2022effective, rahimian2019identifying}.
Clustering-based methods that aggregate scenarios and then create separate ambiguity sets for each of them have also been proposed~\cite{zhang2023stochastic}.
Motivated by scenario reduction for robust~\cite{goerigk_scen_reduction} and stochastic optimization~\cite{bertsimas2023optimization}, we develop a probability-driven scenario reduction approach for distributionally robust optimization problems compatible with both discrete and continuous probability distributions with a provable approximation quality. 

Dimension reduction methods typically focus on reducing the dimensionality of scenario vectors themselves.
Common approaches for this goal are feature selection~\cite{feature_selection}, principal component analysis~\cite{PCA_Pearson, pca}, random projections~\cite{random_projection} or compressed sensing~\cite{compressed_sensing}.
These approaches aim to simplify the representation of scenarios by reducing the number of features, thereby making the data more manageable without decreasing the number of scenarios.
In contrast, our focus is on reducing the number of scenarios and computing suitable representative scenarios while ensuring provable approximation guarantees. 
Importantly, our framework does not rely on structural assumptions on the ambiguity set and applies to both discrete and continuous supports. 
This generality, however, comes at the cost of potentially more conservative bounds compared to reduction methods that explicitly exploit problem-specific ambiguity set geometry or objective structure.


\subsection{Contribution}

The main contributions of this paper are as follows:
\begin{enumerate}
	\item We present a \emph{scenario reduction approach for distributionally robust optimization} with provable worst-case approximation quality for problems with an objective function that is monotonically homogeneous in the uncertainty.
	\item Unlike existing methods, we impose no structural requirements on the ambiguity set and conceptually handle both discrete and continuous supports.
	\item We provide an explicit mixed-integer optimization formulation for an \emph{optimal clustering} along with a fast alternative based on the well-known \emph{$k$-means}
          algorithm~\cite{kmeans_macqueen}. For quadratic objectives, we use a mixed-integer semidefinite program for this purpose. 
	\item We illustrate the performance of our methods through numerical experiments on benchmark mixed-integer linear problems from the MIPLIB library and practically relevant applications from convex quadratic portfolio optimization on real-world data.
\end{enumerate}


\subsection{Outline}
The introduction of our work is followed by setting up the framework and providing the key approximation results in Section~\ref{sec:scenario_reduction} for DRO problems with monotone and homogeneous uncertain objective functions together with an arbitrary ambiguity set for discrete or continuous random variables.
Section~\ref{sec:scenario_partitions} discusses the various approaches that can be used to reduce and simplify the set of scenarios by partitioning the scenario set via k-means clustering or integer programming.
Subsequently, Section~\ref{sec:dimen_red_amb_sets} discusses the projection of the ambiguity set over the original set of scenarios to the lower dimensional DRO problem. 
Section~\ref{sec:extensions} extends our work to the case of uncertain quadratic objectives, where the uncertain parameter is a symmetric quadratic positive semidefinite matrix.
Finally, we illustrate the performance of our methods through
numerical experiments in Section~\ref{sec:experiments} and summarize
our findings in the conclusion.

\section{Scenario Reduction for DRO with Approximation Guarantee}
\label{sec:scenario_reduction}

To successfully solve DRO problems when many or a continuous set of scenarios are present, the scenarios need to be reduced.
In addition, the problem~\eqref{Eq:DRO} needs to be simplified by reducing the problem size. 
Our framework achieves this by partitioning the original scenario set into clusters or regions.
We then calculate a representative scenario for each new set in the partition. 
By focusing on uncertain monotone and positive homogeneous objective functions, we cover a wide range of optimization problems, allowing us to obtain approximation guarantees for the objective values of the representative scenarios. 

Our goal for this framework is to obtain worst-case guarantees on the approximation quality of the solution \(\tilde{x}\in\mX\) obtained from solving the optimization problem with the reduced set of scenarios. 
Such an approximation guarantee typically looks like
\begin{equation}
\sup_{\prob\in\mP} \E_{s\sim \prob} \left[f(\tilde{x},s)\right] \leq \alpha \inf_{x\in\mX}  \sup_{\prob\in\mP} \E_{s\sim \prob} \left[f(x,s)\right],\label{Eq:Approx_quality}
\end{equation}
with \(\alpha \geq 1\).
This means that the approximate solution has a worst-case objective guarantee that is no worse than a factor \(\alpha\) when compared to the original worst-case objective value.
In this setting, we call \(\tilde{x}\) an \emph{\(\alpha\)-approximation} of problem~\eqref{Eq:DRO}.
Our results holds for distributions with both discrete and continuous scenario sets unless specified otherwise.

\subsection{Aggregating Scenarios}\label{Sec:AggregatingScen}
Given a set of scenarios, we aim to choose a representative scenario such that all scenarios in the set can be bounded by scaling of the representative scenario.
In addition, it is required that the representative scenario can be bounded by scaling any of the original scenarios.
We thus focus our attention on functions whose response to scaled arguments is bounded by a scaled original value.
Specifically, we focus our attention on functions which satisfy $f(x,s) \leq \alpha f(x, \tilde{s}) \text{ for all } s, \tilde{s} \in \R^m$,
such that $s \leq \alpha \tilde{s}$ holds component-wise with $\alpha>0$.
In order to obtain an approximation guarantee for the approximate solution, we assume monotonicity and homogeneity for the objective function \(f(x,s)\) and all feasible $x\in\mX$.
We give a formal definition of this assumption as follows.


\begin{assumption}
	\label{assump:func_assumps}
	The objective function \(f\colon \mX \times \mS \rightarrow \R, (x,s) \mapsto f(x,s)\) fulfills the following conditions.
	\begin{enumerate}[label= (\roman*)]
		\item Monotonicity: For an arbitrarily fixed \(x\in\mX\), the following implication holds for all \(s,\tilde{s}\in\R^m:\) $s\leq\tilde{s} \Rightarrow f(x,s)\leq f(x,\tilde{s})$.
		\item Positive homogeneity: for \(x\in\mX, s\in\R^m\) and \(\alpha > 0\), we have \(f(x, \alpha s)\leq C(\alpha) f(x,s)\) for some function $C:\R_+ \rightarrow \R_+$. 
	\end{enumerate}
\end{assumption}

This assumption ensures that we can bound the change in the function when the original scenarios are replaced by a representative.
This includes positive homogeneity of any arbitrary degree i.e., \(f(x,\alpha s)\leq \max(\alpha, \alpha^k) f(x,s)\) for fixed $x\in\mX$ without any change in the analysis we perform for $C(\alpha)=\alpha$ for ease of notation.
We note that a wide class of problems satisfies the assumptions above.
Next, we give examples of functions satisfying Assumption \ref{assump:func_assumps}.

\begin{example}
	Let $g\colon \mX \rightarrow \R^{m}, x \mapsto g(x)$. The following classes of functions fulfill Assumption \ref{assump:func_assumps}:
	\begin{enumerate}[label= (\roman*)]
		\item Linear functions: \(f(x,s)=g(x)^\top s\) with $g(x)\geq 0$ componentwise. This also includes separable objective functions $f(x,s)=g(x)^\top h(s)$ by redefining the scenarios as $s' = h(s)$ with $h\colon \mS \rightarrow \R_+^{m}, s \mapsto h(s)$.
		\item Functions that scale with the norm of the uncertainty: \(f(x,s) = g(x)\|s\|\).
		\item Nondecreasing positive homogeneous functions of an arbitrary degree \(k\geq 1\) such as \(f(x,s) = (x^{\top}  s)^k\) if \(x \geq 0\).
		\item Optimal value functions of the form $f(x,s)=c^\top x + \min_{y\in \mY(x)} g(y)^\top s$ with $g(y)\geq 0$ componentwise and $\mY(x)$ as the feasible set of the second-stage problem.
	\end{enumerate}


	
\end{example}

Assumption~\ref{assump:func_assumps} is fulfilled by a wide range of practically relevant optimization problems. Linear combinatorial optimization problems, such as shortest path, assignment, scheduling, or network design, often have objectives of the form \(f(x, s) = g(x)^\top s\) with \(g(x) \geq 0\) and cost vectors \(s \in \R^m_+\), thus satisfying monotonicity and positive homogeneity; see, e.g.,~\cite{network_flow, wolsey}. Convex quadratic programs, such as portfolio optimization~\cite{portfolio}, also fulfill the assumption: here, \(f(x, s) = x^\top \Sigma x\), where \(\Sigma\) denotes a scenario representing a positive semidefinite covariance matrix. The function is then positively homogeneous of degree two and monotonic if \(x \geq 0\). Furthermore, two-stage stochastic optimization problems with linear second-stage costs~\cite{birge2011introduction}, given by \(f(x,s) = c^\top x + \min_{y \in \mY(x)} g(y)^\top s\)\ with \(g(y) \geq 0\), also satisfy the assumption. Such models frequently arise in applications like supply chain management~\cite{nagar2008supply} or energy systems~\cite{AIGNER2022318}.


In order to discuss the properties of a representative scenario, we
assume that \(\{\mS_1,\ldots \mS_K\}\subset\mS\) is a partition
of the scenario set \(\mS\) of size \(K\ll |\mS|\) such that
\(\mS=\cup_{j=1}^K \mS_j\) and \(\mS_{j}\cap \mS_{k} =\emptyset\) for
all \(j\neq k\).
We discuss how to obtain suitable partitions in Section \ref{sec:scenario_partitions}.
Given a partition, we proceed by constructing a new scenario set \(\tS =\{\newscen{1},\ldots, \newscen{K}\}\) where scenario \(\newscen{j}\) is the representative scenario for cluster \(\mS_{j}\) for all \(j=1,\ldots, K\).

Next, we make a assumption regarding the original and representative scenarios. 

\begin{assumption}
	\label{assump:pos_scenario_sets}
	The original scenario set \(\mS\) and the set of representative scenarios \(\tS\) are both strictly positive i.e., \(s > 0 \text{ for all } s \in \mS\) and \(\newscen{} > 0 \text{ for all } \newscen{} \in \tS\).
\end{assumption}
This assumption allows us to leverage the positive homogeneity property of the objective function to bound its value at the representative scenarios by simply scaling the objective function at the original scenario and vice-versa, as explained by the following lemma.
Note that the positivity assumption is often naturally satisfied in combinatorial or related practically relevant optimization problems.

\begin{lemma}
	\label{assump:scenario_bounds}
	Let $\mS$ be compact. If the original scenario set \(\mS\) and the representative scenario set \(\tS\) satisfy Assumption~\ref{assump:pos_scenario_sets} and if \(\mS\) is compact, then given a partition \(\{\mS_1,\dots,\mS_k\}\) of \(\mS\) we have:
	\begin{enumerate}[label=(\roman*)]
		\item There exists an \(\alpha>0\), such that for every \(j=1,\ldots,K\) it holds \(s \leq \alpha \newscen{j}\) for all \(s\in \mS_{j}\).
		\item There exists a \(\beta>0\), such that for every \(j=1,\ldots,K\) it holds \(\newscen{j}\leq \beta s\) for all \(s\in \mS_{j}\).
	\end{enumerate}
\end{lemma}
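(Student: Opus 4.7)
The plan is to read both inequalities componentwise and reduce each one to a finite maximum of ratios, using compactness of $\mS$ together with strict positivity of the scenarios and representatives to ensure all ratios are bounded.

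For part (i), I would argue as follows. For each coordinate $i \in \{1,\dots,m\}$, the map $s \mapsto s_i$ is continuous on the compact set $\mS$, hence attains a finite maximum $M_i \coloneqq \max_{s\in\mS} s_i$. Since $\newscen{j} > 0$ by Assumption~\ref{assump:pos_scenario_sets}, each component $(\newscen{j})_i$ is strictly positive, so the ratio $M_i/(\newscen{j})_i$ is well defined and finite. Setting
\[
\alpha \;\coloneqq\; \max_{j=1,\dots,K}\;\max_{i=1,\dots,m} \frac{M_i}{(\newscen{j})_i},
\]
we get $s_i \leq M_i \leq \alpha\,(\newscen{j})_i$ for every $s\in\mS_j$ and every $i$, which is exactly the claimed componentwise inequality $s \leq \alpha\newscen{j}$. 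The maximum is finite because it is taken over a finite index set and each entry is finite and positive.

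For part (ii), the argument is symmetric but uses the extra ingredient that $s > 0$ on the \emph{entire} compact set $\mS$. Continuity of the coordinate map $s \mapsto s_i$ on compact $\mS$ together with strict positivity $s_i > 0$ for all $s \in \mS$ yields, by the extreme value theorem, a strictly positive minimum $m_i \coloneqq \min_{s\in\mS} s_i > 0$. Hence $1/s_i \leq 1/m_i < \infty$ uniformly over $s\in\mS$, and I can set
\[
\beta \;\coloneqq\; \max_{j=1,\dots,K}\;\max_{i=1,\dots,m} \frac{(\newscen{j})_i}{m_i},
\]
which gives $(\newscen{j})_i \leq \beta \, m_i \leq \beta \, s_i$ for all $s\in\mS_j$, proving $\newscen{j} \leq \beta s$.

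The only subtle point — and the step I would flag as the main obstacle — is the justification of the strictly positive lower bound on each coordinate in part (ii). Without compactness, strict positivity $s > 0$ on $\mS$ does not preclude $\inf_{s\in\mS} s_i = 0$, which would send the ratios $(\newscen{j})_i / s_i$ to infinity and invalidate the construction. This is precisely where the compactness hypothesis on $\mS$ is essential; for part (i) only the weaker fact that $\mS$ is bounded is used. Everything else reduces to taking a maximum over a finite collection of finite numbers, so no further technicalities arise and the proof is complete once these two constants are exhibited.
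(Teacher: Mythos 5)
Your proof is correct and follows essentially the same route as the paper's: componentwise extrema obtained via compactness, ratios against the strictly positive components of the representatives, and a finite maximum over clusters and coordinates. The only minor difference is that you use global extrema $M_i,m_i$ over all of $\mS$ instead of the paper's per-cluster bounds $\overline{s}_j,\underline{s}_j$ (yielding a possibly looser but equally valid constant), and you make explicit the point, left implicit in the paper, that compactness together with strict positivity forces the coordinatewise minima to be strictly positive.
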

\begin{proof}
Since the set \(\mS\subset\R^m_+\) is compact, there exist points \(\overline{s},\underline{s}\in\R^m_+\) such that \(s \leq \overline{s} \) and \(s \geq \underline{s} \) hold componentwise for all \( s \in \mS\).
Similarly, for all partitioned sets \(\mS_j\) there exist points \(\overline{s}_j,\underline{s}_j \in\R^m_+\) such that \(s \leq \overline{s}_j \) and \(s \geq \underline{s}_j \) for all \( s \in \mS_j\).
Denote the components of \(\underline{s}_j\) and \(\overline{s}_j\) with index $i$ by \(\underline{s}_{ji}\) and \(\overline{s}_{ji}\), respectively. 
Let \(\tilde{s}_j\) be the representative scenario for set \(\mS_j\).
Then we have 
\[
\alpha = \max_{j=1,\dots,K} \alpha_j \text{ with } \alpha_j = \max_{i=1,\dots,m}\frac{\overline{s}_{ji}}{\tilde{s}_{ji}},
\]
and similarly
\[
\beta = \max_{j=1,\dots,K} \beta_j, \text{ where } \beta_j = \max_{i=1,\dots,m}\frac{\tilde{s}_{ji}}{\underline{s}_{ji}}.
\]
Given these \(\alpha\) and \(\beta\), we obtain the desired bounds componentwise for $i=1,\ldots,m$:
\[
\alpha \tilde{s}_{ji} = \left(\max_{j'=1,\dots,K} \alpha_{j'}\right) \tilde{s}_{ji} \geq \alpha_{j} \tilde{s}_{ji} = \left(\max_{i'=1,\dots,m}\frac{\overline{s}_{ji'}}{\tilde{s}_{ji'}}\right) \tilde{s}_{ji} \geq \frac{\overline{s}_{ji}}{\tilde{s}_{ji}} \tilde{s}_{ji} = \overline{s}_{ji} \geq s_{i} \;\;\forall s \in \mS_{j}.
\]
Similarly for any scenario \(s \in \mS_{j}\), we compute componentwise for $i=1,\ldots,m$:
\[
\beta s_{i} = \left(\max_{j'=1,\dots,K} \beta_{j'}\right) s_{i} \geq \beta_{j} s_{i} = \left(\max_{i'=1,\dots,m}\frac{\tilde{s}_{ji'}}{\underline{s}_{ji'}}\right) s_{i} \geq \frac{\tilde{s}_{ji}}{\underline{s}_{ji}} s_{ji} \geq \frac{\tilde{s}_{ji}}{\underline{s}_{ji}} \underline{s}_{ji} = \tilde{s}_{ji},
\]
where the last inequality holds because \(\underline{s}_{j} \leq s \;\forall\; s \in \mathcal{S}_{j}\) componentwise. 
This concludes the proof.
\end{proof}

So far, we have bounded the original and the representative scenarios
with respect to each other. 
We consider next the impact of replacing the original scenarios with the representative scenarios on the ambiguity set. The latter specifies the probabilities of these scenarios. 


\subsection{Reduced Ambiguity Set}
Reducing the size of the scenario set also changes the probability distributions over the scenarios and, consequently, the ambiguity set.
There are multiple ways by which such a transformation can take place. 
For example, we may seek the new probability distribution over the reduced set that minimizes the distance between itself and the original distribution. 
However, since these new scenarios are supposed to be representative of the original scenarios, we opt for a simpler alternative by aggregating the probabilities in a straightforward manner. 
Given a probability distribution \(\prob\in \mP\) over the original scenarios \(\mS\), we modify it to a probability distribution over the reduced scenario set \(\tS\)  with $|\tS|=K$.
We do this by computing the probability vector \(\tilde{p}\in [0,1]^K\) as 
\begin{equation*}
	\tilde{p}_j = \int_{s\in \mS_j} \prob(\text{d}s),
\end{equation*}
where \(\tilde{p}_j\) is the probability associated with
\(\newscen{j}\) which is a representative scenario of set
\(\mS_j\). 
By aggregating every distribution in the set, we thus adapt an ambiguity set over the original set \(\mP\) to an ambiguity set over the reduced set denoted by
\begin{equation}
	\tilde{\mP} \coloneqq \left\{ \tilde{p}\in [0,1]^{K} \text{ : }  \tilde{p}_j = \int_{s\in \mS_j} \prob(\text{d}s),\text{ } \prob \in \mP \right\}. \label{Eq:ambiguity_reduced}
\end{equation}
If the original scenario set is discrete, the above ambiguity set simplifies to
\begin{equation*}
	\tilde{\mP} \coloneqq \left\{ \tilde{p}\in [0,1]^{K} \text{ : }  \tilde{p}_j = \textstyle\sum_{\{i \mid s_i\in \mS_j\}} p_i,\text{ } p \in \mP \right\}.
\end{equation*}
With this setup, we can write a DRO problem over the reduced ambiguity
set analogously to the original DRO problem~\eqref{Eq:DRO}: 
\begin{align}
	\inf_{x\in\mX} \sup_{\tilde{p} \in \tilde{\mP}}  \E_{\tilde{s}\sim \tilde{p}} \left[f(x,\tilde{s})\right]
	=
	\inf_{x\in\mX} \sup_{\tilde{p} \in \tilde{\mP}}  \sum_{j=1}^{K} \left[f(x,\newscen{j})\right]\tilde{p}_j,  \tag{DROred}\label{Eq:DRO_reduced}	
\end{align}

We now show that under the assumptions of Section~\ref{Sec:AggregatingScen} any solution of the DRO problem with a reduced ambiguity set~\eqref{Eq:DRO_reduced} is an \(\alpha \beta\)-approximation for the original DRO problem~\eqref{Eq:DRO}. 

\begin{theorem}\label{thm:abapprox}
Let \(x^*\) be a solution of problem~\eqref{Eq:DRO} that satisfies Assumptions~\ref{assump:func_assumps}.
Let the reduced DRO problem~\eqref{Eq:DRO_reduced} be constructed with scenarios \(\newscen{j} \in \tS\) for some partition \((\mS_1,\dots,\mS_K)\), such that Assumption~\ref{assump:pos_scenario_sets} is fulfilled.
Then every solution \(\tx\) of~\eqref{Eq:DRO_reduced} is an \(\alpha\beta\)-approximation of the DRO problem~\eqref{Eq:DRO}, i.e.
\begin{equation*}
	\sup_{\prob \in \mP} \E_{s \sim \prob}[f(\tx, s)] \leq \alpha \beta \sup_{\prob \in \mP} \E_{s \sim \prob}[f(x^*, s)].
\end{equation*}
\end{theorem}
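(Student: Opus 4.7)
The plan is to prove the theorem via a standard sandwich chain of inequalities that relates the original worst-case expected cost to the reduced worst-case expected cost for \emph{any} fixed decision $x$, and then to invoke optimality of $\tx$ on the reduced side.

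First, I would establish two pointwise inequalities on the expectations that hold for every $x\in\mX$ and every $\prob\in\mP$ with its aggregated counterpart $\tp\in\tP$ defined by $\tp_j=\int_{s\in\mS_j}\prob(\mathrm{d}s)$. Using Lemma~\ref{assump:scenario_bounds}(i), for each $s\in\mS_j$ we have $s\le \alpha\newscen{j}$; combining monotonicity and positive homogeneity from Assumption~\ref{assump:func_assumps} yields $f(x,s)\le f(x,\alpha\newscen{j})\le \alpha f(x,\newscen{j})$. Integrating scenario-by-scenario over the partition gives
\begin{equation*}
\E_{s\sim\prob}[f(x,s)] \;=\; \sum_{j=1}^K \int_{s\in\mS_j} f(x,s)\,\prob(\mathrm{d}s) \;\le\; \alpha \sum_{j=1}^K f(x,\newscen{j})\,\tp_j \;=\; \alpha\,\E_{\tilde{s}\sim\tp}[f(x,\tilde{s})].
\end{equation*}
Symmetrically, using Lemma~\ref{assump:scenario_bounds}(ii) and the same two properties, $f(x,\newscen{j})\le \beta f(x,s)$ for every $s\in\mS_j$, so
\begin{equation*}
\E_{\tilde{s}\sim\tp}[f(x,\tilde{s})] \;=\; \sum_{j=1}^K f(x,\newscen{j})\,\tp_j \;\le\; \beta\sum_{j=1}^K \int_{s\in\mS_j} f(x,s)\,\prob(\mathrm{d}s) \;=\; \beta\,\E_{s\sim\prob}[f(x,s)].
\end{equation*}

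Next, I would lift these pointwise inequalities to the ambiguity sets. Because $\tP$ is by definition~\eqref{Eq:ambiguity_reduced} the image of $\mP$ under the aggregation map $\prob\mapsto\tp$, taking suprema on both sides produces
\begin{equation*}
\sup_{\prob\in\mP}\E_{s\sim\prob}[f(x,s)] \;\le\; \alpha\,\sup_{\tp\in\tP}\E_{\tilde{s}\sim\tp}[f(x,\tilde{s})], \qquad \sup_{\tp\in\tP}\E_{\tilde{s}\sim\tp}[f(x,\tilde{s})] \;\le\; \beta\,\sup_{\prob\in\mP}\E_{s\sim\prob}[f(x,s)],
\end{equation*}
for every $x\in\mX$. (A small care point is the direction of the supremum transfer: the first inequality uses that each $\prob\in\mP$ induces some $\tp\in\tP$, and the second uses that every $\tp\in\tP$ arises from some $\prob\in\mP$, so both directions follow directly from the definition of $\tP$.)

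Finally, I would chain these with optimality. Applying the first inequality at $x=\tx$, then using that $\tx$ minimizes the reduced worst-case objective so $\sup_{\tp\in\tP}\E_{\tilde{s}\sim\tp}[f(\tx,\tilde{s})]\le \sup_{\tp\in\tP}\E_{\tilde{s}\sim\tp}[f(x^*,\tilde{s})]$, and then the second inequality at $x=x^*$, yields
\begin{equation*}
\sup_{\prob\in\mP}\E_{s\sim\prob}[f(\tx,s)] \;\le\; \alpha\sup_{\tp\in\tP}\E_{\tilde{s}\sim\tp}[f(\tx,\tilde{s})] \;\le\; \alpha\sup_{\tp\in\tP}\E_{\tilde{s}\sim\tp}[f(x^*,\tilde{s})] \;\le\; \alpha\beta\sup_{\prob\in\mP}\E_{s\sim\prob}[f(x^*,s)],
\end{equation*}
which is exactly the desired $\alpha\beta$-approximation bound. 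I do not expect a genuine obstacle here; the only subtle step is the supremum transfer between $\mP$ and $\tP$, which is immediate from~\eqref{Eq:ambiguity_reduced} but worth stating explicitly so the proof is self-contained.
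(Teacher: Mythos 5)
Your proposal is correct and follows essentially the same route as the paper's proof: decompose the expectation over the partition, use Lemma~\ref{assump:scenario_bounds}(i) with monotonicity and positive homogeneity to pass to the representatives with factor \(\alpha\), invoke optimality of \(\tx\) for~\eqref{Eq:DRO_reduced}, then disaggregate and use Lemma~\ref{assump:scenario_bounds}(ii) for the factor \(\beta\). The only difference is presentational — you isolate the two pointwise expectation bounds and the supremum transfer between \(\mP\) and \(\tP\) as explicit preliminary steps, which the paper handles inline via the constrained supremum \(\sup_{\prob\in\mP}\{\sum_j f(\cdot,\newscen{j})\tilde{p}_j : \tilde{p}_j=\int_{\mS_j}\prob(\mathrm{d}s)\}\).
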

\begin{proof}
  As the family of sets \(\mS_{j}\), \(j=1,\ldots,K\) is a partition of \(\mS\), we can write
	\begin{equation*}
		\sup_{\prob \in \mP} \E_{s \sim \prob}[f(\tx, s)]
		= \sup_{\prob \in \mP} \int_{s\in\mS} f(\tx,s) \prob(\text{d}s)
		= \sup_{\prob \in \mP} \sum_{j=1}^{K} \int_{s\in S_j} f(\tx, s)\prob(\text{d}s).
	\end{equation*}
	Due to our assumption of positive homogeneity and because the scenarios \(\scen{i}\) are upper bounded by the aggregated scenarios \(\newscen{j}\), we bound the last term as follows:
	\begin{equation*}
	\begin{aligned}
		\sup_{\prob \in \mP} \sum_{j=1}^{K} \int_{s\in S_j} f(\tx, s)\prob(\text{d}s)
		&\leq \alpha \sup_{\prob \in \mP} \sum_{j=1}^{K}\int_{s\in S_j}f(\tx, \newscen{j})\prob(\text{d}s)\\
		&= \alpha \sup_{\prob \in \mP} \sum_{j=1}^{K}f(\tx, \newscen{j})\left(\int_{s\in S_j}\prob(\text{d}s)\right)\\
		&= \alpha \sup_{\prob \in \mP} \left\{\sum_{j=1}^{K}f(\tx, \newscen{j})\tilde{p}_j \;\Big|\; \tilde{p}_j = \int_{s\in S_j}\prob(\text{d}s) \right\}.
	\end{aligned}
	\end{equation*}
	Since \(\tilde{x}\) is a minimizer for the reduced problem~\eqref{Eq:DRO_reduced} whereas \(x^*\) is a solution to the original DRO problem, we can write
	\begin{equation*}
	\begin{aligned}
		\alpha \sup_{\prob \in \mP} \left\{\sum_{j=1}^{K}f(\tx, \newscen{j})\tilde{p}_j \;\Big|\; \tilde{p}_j = \int_{s\in S_j}\prob(\text{d}s) \right\}
		\leq \alpha \sup_{\prob \in \mP} \left\{\sum_{j=1}^{K}f(x^*, \newscen{j})\tilde{p}_j \;\Big|\;\tilde{p}_j = \int_{s\in S_j}\prob(\text{d}s) \right\}.
	\end{aligned}
	\end{equation*}
	Now we disaggregate the probabilities again as follows
	\begin{equation*}
	\begin{aligned}
		\alpha \sup_{\prob \in \mP} \left\{\sum_{j=1}^{K}f(x^*, \newscen{j})\tilde{p}_j \;\Big|\; \tilde{p}_j = \int_{s\in S_j}\prob(\text{d}s) \right\} 
		&= \alpha \sup_{\prob \in \mP} \sum_{j=1}^{K}f(x^*, \newscen{j})\left(\int_{s\in S_j}\prob(\text{d}s)\right)\\
		&= \alpha \sup_{\prob \in \mP} \sum_{j=1}^{K}\int_{s\in S_j}f(x^*, \newscen{j})\prob(\text{d}s). 
	\end{aligned}
	\end{equation*}
	Finally, we bound the aggregated scenario \(\newscen{j}\) by the original scenario using Assumption~\eqref{assump:scenario_bounds}:
	\begin{equation*}
	\begin{aligned}
		\alpha \sup_{\prob \in \mP} \sum_{j=1}^{K}\int_{s\in S_j}f(x^*, \newscen{j})\prob(\text{d}s) 
		&\leq \alpha \beta \sup_{\prob \in \mP} \sum_{j=1}^{K}\int_{s\in S_j} f(x^*, s)\prob(\text{d}s) \\
		&= \alpha \beta \sup_{\prob \in \mP} \int_{s\in\mS} f(x^*, s)\prob(\text{d}s) \\
		&= \alpha \beta \sup_{\prob \in \mP} \E_{s \sim \prob}[f(x^*, s)].
	\end{aligned}
	\end{equation*}
	This concludes the proof.
\end{proof}

\begin{remark}
	\begin{enumerate}
		\item We have proven the above results for the case of continuous scenarios.
		They directly extend to discrete scenario sets \(\mS\), where integrals are replaced by finite sums. 
		
		\item Our theoretical results do not rely on any structural assumptions on the ambiguity set.
		In particular, the approximation guarantees remain valid for polyhedral, norm-based, or more general nonlinear ambiguity sets.
		
		\item In the case of positively homogeneous functions of degree \(\gamma > 1\), the product \(\alpha \beta\) is replaced by \((\alpha \beta)^{\gamma}\).
	\end{enumerate}
\end{remark}

	
The authors of~\cite{goerigk_scen_reduction} establish a similar approximation bound for scenario reduction in the context of robust optimization with discrete uncertainty sets.
In their proof, they apply less stringent bounds for the representative scenarios compared to our approach in Lemma~\ref{assump:scenario_bounds}.
However, their method leverages the property that, in linear robust optimization with a discrete uncertainty set, the worst-case scenario is always one of the given discrete scenarios, and a robust solution provides protection against the convex hull of these scenarios.
In contrast, our setting is more general than the one considered in \cite{goerigk_scen_reduction} as it can be applied to a wide class of optimization problems satisfying Assumption \ref{assump:func_assumps}.
Our approach involves uncertain probabilities over the scenarios where constraining the worst-case uncertainties is not feasible a priori for a general convex ambiguity set.

An interesting observation is that, although our approach is applied in a distributionally robust optimization setting with uncertain probabilities, the worst-case approximation bound is independent of any probabilistic information.
Consequently, the bound holds for any objective function satisfying Assumption~\ref{assump:func_assumps}, and for any ambiguity sets whose supports satisfy Assumption~\ref{assump:pos_scenario_sets}, provided that the reduced ambiguity set is constructed by aggregating the probabilities of the original one.
This makes the approximation bound very general and applicable to a wide range of settings.
Although one might initially suspect that this bound serves only as an upper limit and is not actually attainable, the following section presents an example in which the bound is achieved in the limit.

\subsection{Sharpness of bound in Theorem~\ref{thm:abapprox}}
The approximation ratio bound of $\alpha \beta$ may appear loose at first, but it is in fact tight.
It is possible to construct an example in which the reduced scenario set attains the approximation ratio $\alpha \beta$.
Consider the following scenario set
\[
S = \left\{
(1, 1+\varepsilon),
(10, 1+\varepsilon),
(10, 1),
(1, 10)
\right\}.
\]
We now construct the following reduced scenario set
\[
\tilde{S} = \left\{
(1, 1+\varepsilon)
\right\}.
\]
Using Lemma~\ref{assump:scenario_bounds} we can identify the values of $\alpha$ and $\beta$ as $10$ and $1$ respectively. 
Thus, we obtain a theoretical approximation guarantee of $\alpha \beta = 10$.

We now construct a DRO problem whose resulting solution attains this approximation guarantee.
To this end, we construct an ambiguity set on the original scenario set $S$. 
We can write the ambiguity set as
\[
\mathcal{P} = \left\{ p \in [0,1]^4 \mid \;p_2 = 1-\delta,\; p_1=p_3=p_4=\frac{\delta}{3} \right\}.
\]
The reduced ambiguity set is given by 
\[
\tilde{\mathcal{P}} = \left\{ p \in [0,1] \mid \;p_1 = 1 \right\}.
\]
We can then write the DRO problem over the original ambiguity set as
\begin{equation}
\begin{aligned}
\min_{x \in X} &\; \max_{p \in \mP}
\; \mathbb{E}\big[ s^\top x \big]\\
\text{s.t.} &\; x_1 + x_2 = 1,\\
&\; x_1, x_2 \geq 0.
\end{aligned}
\end{equation}
The optimal solution of this problem is given by $x^*$ and is equal to $x_1 = 0, x_2 = 1$ with an optimal objective value of \((1-\delta)(1+\varepsilon) + \frac{\delta}{3}(12 + \varepsilon)\) 
The optimal solution for the reduced ambiguity set is denoted by $\tilde{x}$ and is equal to $x_1 = 1, x_2 = 0$. 
However, when you evaluate the worst case value of this solution on the original ambiguity set you obtain a worst case objective value of $(1-\delta)10 + \frac{\delta}{3}12$.

Given this objective value, we can obtain the approximation ratio as
\[
\text{Approx. error} =
\frac{\displaystyle \max_{p \in P} \mathbb{E}\big[ s^\top \tilde{x} \big]}
     {\displaystyle \max_{p \in P} \mathbb{E}\big[ s^\top x^* \big]}
= \frac{(1-\delta)10 + \frac{\delta}{3}12}{(1-\delta)(1+\varepsilon) + \frac{\delta}{3}(12 + \varepsilon)}
\;\;\longrightarrow\;\; 10 \;\text{as } \varepsilon \to 0,\; \delta \rightarrow 0.
\]

Theorem~\ref{thm:abapprox} ensures that a solution obtained from a set of representative scenarios has a bounded approximation quality.
However, since these bounds may be loose, we want to partition the scenario set \(\mS\) to ensure that the representative scenarios achieve the tightest possible bounds.
This issue is addressed in the Section \ref{sec:scenario_partitions}.

\section{Scenario Partitioning}
\label{sec:scenario_partitions}

Theorem~\ref{thm:abapprox} provides a bound on the performance of a solution of the DRO problem with the reduced ambiguity set. 
The bound depends on the parameters \(\alpha\) and \(\beta\), which depend on the partitioning of the scenario set and the corresponding selection of representative scenarios. 
In this section, we aim to assess how the partitioning of the scenario set and the selection of representative scenarios influence the approximation bound.
Furthermore, we discuss strategies for optimizing said bounds.
We begin by exploring how to identify the optimal representative scenario when the whole scenario set is reduced to a single point.

\subsection{Optimal Representative Scenario}
\label{sec:optimal_representative}
\begin{lemma}
\label{lem:single_scenario}
Let a scenario set \(\mS\) be given.
Then, an optimal single representative scenario achieving the approximation quality \(\alpha \beta\) in Theorem~\ref{thm:abapprox} is given by
\(\underline{s}\), where \(\underline{s}_i = \min_{s \in \mS}
s_i\) for all vector components $i=1,\ldots,m$. 
This scenario achieves an approximation quality of 
\[
	\alpha \beta = \max_{i=1,\dots,m}\frac{\overline{s}_i}{\underline{s}_i} \text{, where } \overline{s}_i = \max_{s \in \mS} s_i.
\]
\end{lemma}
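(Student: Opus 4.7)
The plan is to reduce the problem to a pure min–max question over the single representative vector $\tilde{s}$ and then exhibit a matching lower bound and achievability argument. First, I would instantiate Lemma~\ref{assump:scenario_bounds} in the degenerate partition $K=1$, so that the only freedom is the choice of $\tilde{s} \in \R^m_{>0}$. From that lemma's proof, the tightest admissible constants are given explicitly by
\[
\alpha(\tilde{s}) = \max_{i=1,\dots,m} \frac{\overline{s}_i}{\tilde{s}_i}, \qquad \beta(\tilde{s}) = \max_{i=1,\dots,m} \frac{\tilde{s}_i}{\underline{s}_i},
\]
with $\overline{s}_i = \max_{s\in\mS} s_i$ and $\underline{s}_i = \min_{s\in\mS} s_i$ (which exist and are positive by compactness plus Assumption~\ref{assump:pos_scenario_sets}). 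The goal becomes minimizing the product $\alpha(\tilde{s})\beta(\tilde{s})$ over positive $\tilde{s}$.

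Next, I would establish the universal lower bound. The key observation is that fixing a common index $i$ gives
\[
\alpha(\tilde{s})\,\beta(\tilde{s}) \;\geq\; \frac{\overline{s}_i}{\tilde{s}_i}\cdot\frac{\tilde{s}_i}{\underline{s}_i} \;=\; \frac{\overline{s}_i}{\underline{s}_i},
\]
where the $\tilde{s}_i$ factors telescope, so the choice of $\tilde{s}$ drops out. Taking the maximum over $i$ yields $\alpha(\tilde{s})\beta(\tilde{s}) \geq \max_{i}\overline{s}_i/\underline{s}_i$ for every admissible representative.

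Finally, I would verify that $\tilde{s} = \underline{s}$ attains this bound: direct substitution yields $\beta(\underline{s}) = \max_i \underline{s}_i/\underline{s}_i = 1$ and $\alpha(\underline{s}) = \max_i \overline{s}_i/\underline{s}_i$, matching the lower bound. Plugging into Theorem~\ref{thm:abapprox} then certifies the stated approximation quality. There is no serious obstacle here; the only subtlety is to remember that the problem data $\overline{s}$ and $\underline{s}$ are well-defined only under the positivity and compactness hypotheses inherited from Lemma~\ref{assump:scenario_bounds}, so the initial reduction step deserves a sentence of justification.
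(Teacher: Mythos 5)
Your proof is correct, but it follows a genuinely different route from the paper's. The paper sets up the same optimization over $(\alpha,\beta,\tilde{s})$, then normalizes $\beta=1$ by rescaling $\tilde{s}$, reduces the constraints to $\overline{s}\leq\alpha\tilde{s}$, $\tilde{s}\leq\underline{s}$, and argues by contradiction that $\tilde{s}=\underline{s}$ solves this simplified problem. You instead prove a universal lower bound: for every positive representative, $\alpha(\tilde{s})\beta(\tilde{s})\geq\frac{\overline{s}_i}{\tilde{s}_i}\cdot\frac{\tilde{s}_i}{\underline{s}_i}=\frac{\overline{s}_i}{\underline{s}_i}$ for each $i$, hence $\alpha(\tilde{s})\beta(\tilde{s})\geq\max_i\overline{s}_i/\underline{s}_i$, and then verify achievability at $\tilde{s}=\underline{s}$. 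Your version buys three things: it needs no w.l.o.g.\ rescaling; it sidesteps the paper's contradiction step, which asserts a strict increase of the max-ratio that does not actually follow from a strict increase in a single component (indeed $\underline{s}$ is not the unique optimum, as the paper's own Corollary~\ref{coro:diagonale} and Theorem~\ref{theorem:alpha_beta} show, so the exclusivity suggested by that contradiction argument is spurious, though harmless for the lemma as stated); and the telescoping bound characterizes the optimal value for \emph{all} representatives at once, which immediately certifies any other representative attaining the bound (e.g.\ the points on the diagonal) as optimal. The paper's approach, in turn, exhibits the reduction to a clean linear-constraint problem that it reuses when setting $\beta=1$ in the MIP formulation~\eqref{Eq:Clustering_MIP}. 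One small caveat in your write-up: the lemma as stated does not assume compactness of $\mS$, so strictly speaking you should either invoke the standing compactness/positivity hypotheses of Lemma~\ref{assump:scenario_bounds} (as you note) or replace $\min/\max$ by $\inf/\sup$; this matches the paper's implicit reading and is not a substantive gap.
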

\begin{proof}
Our goal is to find a single scenario \(\tilde{s}\) to represent the scenario set $\mS$ which minimizes the product \(\alpha \beta\), where 
\[
\begin{aligned}
s \leq \alpha \tilde{s} \quad\forall s \in \mathcal{S}, \quad
\tilde{s} \leq \beta s \quad \forall s \in \mathcal{S},
\end{aligned}
\]
componentwise.
We can construct an optimization problem for this setup as follows,
\begin{align}
	\min_{\alpha, \beta, \tilde{s}} &\; \alpha \beta \\
	\text{s.t.} &\; s \leq \alpha \tilde{s} \quad \forall s \in \mathcal{S},\\
	&\; \tilde{s} \leq \beta s \quad \forall s \in \mathcal{S},\\
	&\;  \alpha,\beta \geq 0, \tilde{s} \in \R^m.
\end{align}
Let \(\underline{s}\) be such that \(\underline{s}_i = \min_{s \in \mathcal{S}}s_i\) and let \(\overline{s}\) be such that \(\overline{s}_i = \max_{s \in \mathcal{S}} s_i\) for each component $i=1,\ldots,m$.
W.l.o.g, we assume that \(\beta = 1\) by rescaling $\tilde{s}$, otherwise redefine $s'=\beta s$ and $\alpha'=\alpha \beta$.
We simplify the above problem by 
\begin{equation}
	\label{prob:approx_basic}
\begin{aligned}
\min_{\alpha, \tilde{s}} &\; \alpha\\
\text{s.t.} &\; \overline{s} \leq \alpha \tilde{s},\; \tilde{s} \leq \underline{s},\\
&\; \alpha \geq 0, \tilde{s} \in \R^m.
\end{aligned}
\end{equation}
We claim that the optimal solution of the above problem is \(\tilde{s} = \underline{s}\) and \(\alpha = \max_{i=1,\dots,m}\frac{\overline{s}_i}{\underline{s}_i}\). 
We prove this result by contradiction. 
Suppose this is not true. Then there exists a solution \(\tilde{s}'\in\R^m \) such that \(\tilde{s}' \leq \underline{s}\) with at least one component \(j\in\{1,\cdots,m\}\) such that \(\tilde{s}_j' < \underline{s}_j\) (to ensure that \(\tilde{s}' \neq \underline{s}\)).
Given this point, the associated optimal objective is then given by \(\alpha' = \max_{i=1,\dots,m}\frac{\overline{s}_i}{\tilde{s}_i'}\) due to the first constraint.
However, we know that \(\tilde{s}' \leq \underline{s} = \tilde{s}\) due to $\beta=1$ which implies \(\frac{\overline{s}_i}{\tilde{s}_i'} \geq \frac{\overline{s}_i}{\tilde{s}_i}\) for all $i=1,\cdots,m$.
By the same argument, \(\frac{\overline{s}_j}{\tilde{s}_j'} > \frac{\overline{s}_j}{\tilde{s}_j}\) holds because \(\tilde{s}_j' < \underline{s}_j\).
This implies \(\alpha' = \max_{i=1,\dots,m}\frac{\overline{s}_i}{\tilde{s}_i'} > \max_{i=1,\dots,m}\frac{\overline{s}_i}{\tilde{s}_i} = \alpha\).
This contradicts the assertion that \(\tilde{s}'\) is optimal.
Hence \(\tilde{s} = \underline{s}\) is optimal.
\end{proof}

The previous result is generalized such that the representative scenario \(\tilde{s}\) is not limited to \(\underline{s}\) which might not be an element of $\mS$.
We show that any point on the line joining \(\underline{s}\) and \(\overline{s}\) achieves the same approximation bound. 

\begin{corollary}\label{coro:diagonale}
	Given a compact scenario set $\mS$ that fulfills Assumption~\ref{assump:pos_scenario_sets} .  Any point \(s'\) on the line joining \(\underline{s}\) and \(\overline{s}\) achieves an approximation ratio of $\max_{i=1,\dots,m} {\overline{s}_i}/{\underline{s}_i}$.
\end{corollary}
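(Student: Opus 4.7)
The plan is to parametrize the segment between $\underline{s}$ and $\overline{s}$ as $s' = (1-t)\underline{s} + t\overline{s}$ for some $t\in[0,1]$, compute the smallest valid $\alpha$ and $\beta$ as functions of $t$, and then verify that their product is independent of $t$ and equals $R := \max_{i=1,\dots,m} \overline{s}_i/\underline{s}_i$. As in the proof of Lemma~\ref{lem:single_scenario}, the tightest $\alpha$ ensuring $\overline{s} \leq \alpha s'$ componentwise is $\alpha = \max_i \overline{s}_i / s'_i$, and the tightest $\beta$ ensuring $s' \leq \beta \underline{s}$ is $\beta = \max_i s'_i / \underline{s}_i$; both maxima are well-defined because $s' \geq \underline{s} > 0$ by Assumption~\ref{assump:pos_scenario_sets}.

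Next, I would introduce the coordinate-wise ratios $r_i := \overline{s}_i / \underline{s}_i \geq 1$, which let me rewrite $s'_i / \underline{s}_i = 1 + t(r_i - 1)$ and $\overline{s}_i / s'_i = r_i / (1 + t(r_i - 1))$. The function $r \mapsto 1 + t(r-1)$ is clearly nondecreasing in $r$ for $t\in[0,1]$, so the maximum defining $\beta$ is attained at any index $i^\star$ with $r_{i^\star} = R$, giving $\beta = 1 + t(R-1)$. A short calculus check shows $\frac{d}{dr}\bigl(r/(1+t(r-1))\bigr) = (1-t)/(1+t(r-1))^2 \geq 0$, so the map $r \mapsto r/(1+t(r-1))$ is likewise nondecreasing on $[1,\infty)$ for $t\in[0,1]$; hence the same index $i^\star$ attains the maximum defining $\alpha$, giving $\alpha = R/(1+t(R-1))$.

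Multiplying the two expressions, the factor $1 + t(R-1)$ cancels and $\alpha\beta = R$, which is the bound claimed. The edge cases $t=0$ (recovering Lemma~\ref{lem:single_scenario}) and $t=1$ (where $s' = \overline{s}$, $\alpha=1$, $\beta=R$) fit into the same formula. The only nonobvious point — and the step I would flag as the main one to justify carefully — is the shared-maximizer argument: both $\alpha$ and $\beta$ are maxima over $i$ of different expressions, and it is crucial that they are simultaneously attained at the index $i^\star$ maximizing $r_i$; everything else is straightforward algebra once monotonicity in $r_i$ is established.
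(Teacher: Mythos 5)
Your proof is correct, and it follows the same overall strategy as the paper's appendix proof: write the tightest factors as $\alpha=\max_i \overline{s}_i/s_i'$ and $\beta=\max_i s_i'/\underline{s}_i$, show that both maxima are attained at a common index, and let the product telescope to $\overline{s}_{i^\star}/\underline{s}_{i^\star}$. Where you differ is in how the shared-maximizer step is established: the paper fixes an index $j$ attaining the $\alpha$-maximum and manipulates the affine identity $1=\lambda\,\underline{s}_i/s_i'+(1-\lambda)\,\overline{s}_i/s_i'$ to conclude that the same $j$ attains the $\beta$-maximum, whereas you re-express both quantities as $1+t(r_i-1)$ and $r_i/(1+t(r_i-1))$ with $r_i=\overline{s}_i/\underline{s}_i$ and use monotonicity in $r_i$ to show that both maxima are attained at the coordinate maximizing $r_i$. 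Your variant buys a little extra: it gives explicit formulas $\beta=1+t(R-1)$ and $\alpha=R/(1+t(R-1))$ along the segment, and it directly identifies the common maximizer as the max-ratio coordinate, so the final identification $\alpha\beta=\max_i \overline{s}_i/\underline{s}_i$ is immediate; in the paper's version that last equality $\overline{s}_j/\underline{s}_j=\max_i \overline{s}_i/\underline{s}_i$ is stated without comment and implicitly relies on the optimality value from Lemma~\ref{lem:single_scenario}. Your flagged ``main step'' (simultaneous attainment) is indeed the crux in both arguments, and your justification of it is complete.
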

We provide the proof of Corollary~\ref{coro:diagonale} and further insights on the set of optimal representative scenarios for a given partition in the Appendix in Theorem~\ref{theorem:alpha_beta} and~\ref{App:representatives}.
The color map in Figure~\ref{fig:alpha_beta} illustrates the resulting approximation factor $\alpha\beta$ in dependence of the representative scenario $\tilde{s}$ for $\mS=[1,3] \times [1,2] \subset \R^2$.
In this picture, the approximation factors are indicated using
different colors, see the legend.
The region enclosed by the two black lines achieves the best possible approximation factor, here 3.


\begin{figure}[htb]
	\centering
	\includegraphics[width=0.7\linewidth]{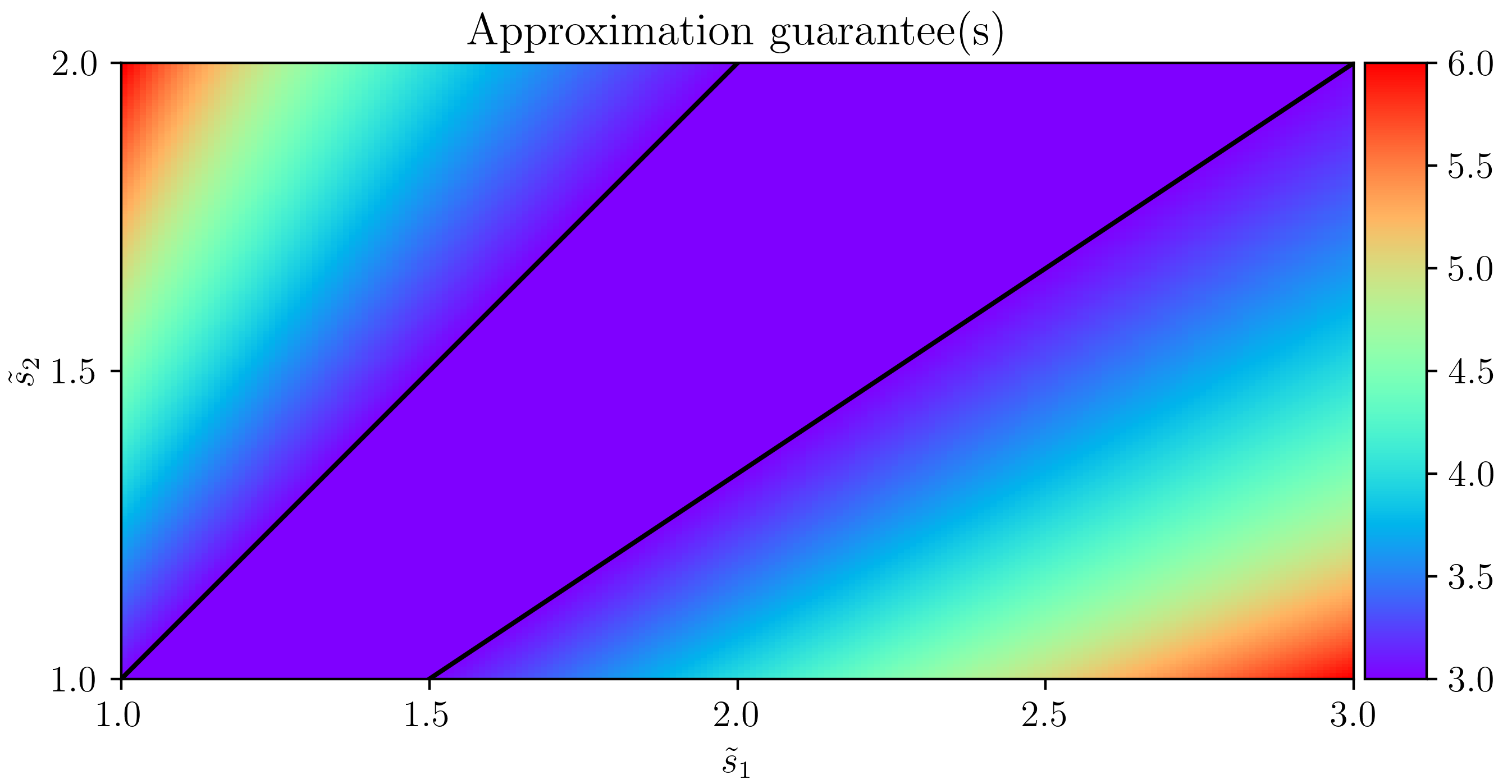}
	\caption{\label{fig:alpha_beta} Approximation guarantee as a function of the representative scenario \(\tilde{s} = (\tilde{s}_1, \tilde{s}_2)\) for scenario set $\mS=[1,3]\times[1,2]\subset \R^2$.}
\end{figure}

So far, we have discussed how to choose a single optimal representative scenario for a set.
Now, we focus on partitioning a set to achieve the best approximation quality over the entire partition. 

\subsection{Optimal Scenario Partitioning}
We begin by modeling the partitioning problem as an optimization problem.
For a representative scenario \(\newscen{}\) of \(\mS\), let the scaling factors \(\alpha\) and \(\beta\) be given by
\[
	\alpha(\newscen{}, \mS) = \max_{i=1,\dots,m}\frac{\max_{s \in \mS} s_i}{\newscen{i}} \;\text{ and }\; \beta(\newscen{}, \mS) = \max_{i=1,\dots,m}\frac{\newscen{i}}{\min_{s \in \mS} s_i}.
\]
The approximation guarantee of \(\newscen{}\) for this set \(\mS\) is given by 
\begin{equation}
t(\newscen{}, \mS) = \alpha(\newscen{}, \mS) \beta(\newscen{}, \mS),
\end{equation}
with the optimal single representative scenario approximation guarantee being computed as
\begin{equation}
	t_1(\mS) = \min_{\newscen{}} \; t(\newscen{}, \mS).
\end{equation}

Our next step is to find an optimal partition of the set \(\mathcal{S}\) into \(K\) subsets $\{\mS_1,\ldots,\mS_K\}$ with \(\mS=\cup_{j=1}^K \mS_j\) and \(\mS_{j}\cap \mS_{k} =\emptyset\) for
all \(j\neq k\), which minimizes the approximation guarantee.
For a given collection of sets, the scaling factors \(\alpha\) and \(\beta\) are the maximum of the scaling factors over the individual sets, see Lemma~\ref{assump:scenario_bounds}.
Therefore, given a partition \(\{\mS_1,\dots,\mS_K\}\) of the set \(\mS\), the optimal approximation guarantee for this partition is 
\[
	t(\mS_1,\dots,\mS_K) = \min_{\newscen{1},\dots,\newscen{K}}\left(\max_{u=1,\dots,K} \;\alpha(\newscen{u}, \mS_u)\right) \left(\max_{v=1,\dots,K} \;\beta(\newscen{v},\mS_v)\right).
\]
Thus, we can formulate an optimization problem to compute the optimal partitioning as 
\begin{align}
	\label{prob:opt_partition}
	t_K(\mS) = \min_{\mS_1,\dots,\mS_K} &\; t(\mS_1,\dots,\mS_K)\\
	\text{s.t.} \hspace{0.3cm} &\; \bigcup_{k=1}^{K} \mS_k = \mathcal{S},\; \mS_j \cap \mS_k = \varnothing \quad \forall j \neq k.
\end{align}

With this setup, we can state the following upper bound on the approximation guarantee, using a covering with hyperrectangles instead of a partionining.
The next theorem provides a tradeoff between the number of clusters $K$ and the approximation guarantee.
\begin{theorem}
	\label{thm:approx_bound}
	Given a scenario set \(\mS \subseteq \mathbb{R}_{+}^m\) whose
        projections along each dimension \(i=1,\dots,m\) are connected
        and a natural number \(K\) such that \(K =
        \prod_{i=1}^{m}r_i\) for some collection of natural numbers
        \(\{r_1,\dots,r_m\}\), the optimal approximation ratio
        \(t_K(S)\) that can be obtained with \(K\) partitions is bounded by
	\[
	t_K(\mS) \leq \max_{i=1,\dots,m}\left(\frac{\overline{s}_i}{\underline{s}_i}\right)^{\frac{1}{r_i}},
	\]
	where \(\overline{s}_i\) and \(\underline{s}_i\) are upper and
        loer bounds of the set \(\mS\) when projected orthogonally on dimension \(i=1,\ldots,m\). 
	This upper bound is attained by a partition of \(\mS\) into hyperrectangles with \(r_i\) partitions of the interval \([\underline{s}_i,\overline{s}_i]\) along dimension \(i=1,\ldots,m\). 
\end{theorem}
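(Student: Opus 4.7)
The plan is to exhibit a concrete partition of $\mS$ into $K$ pieces that realizes the claimed bound, by laying down a multiplicative grid aligned with the coordinate axes and then applying Lemma~\ref{lem:single_scenario} to each piece.

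First I would choose the grid geometrically rather than arithmetically. For each dimension $i=1,\ldots,m$, set $\rho_i := (\overline{s}_i/\underline{s}_i)^{1/r_i}$ and define breakpoints $a_i^{(k)} := \underline{s}_i \rho_i^k$ for $k=0,\ldots,r_i$, so that $a_i^{(0)}=\underline{s}_i$, $a_i^{(r_i)}=\overline{s}_i$, and every successive interval $I_i^{(k)} := [a_i^{(k-1)},a_i^{(k)}]$ has endpoint ratio exactly $\rho_i$. The Cartesian products
\[
R_{\mathbf{k}} := I_1^{(k_1)} \times \cdots \times I_m^{(k_m)}, \qquad \mathbf{k} \in \prod_{i=1}^m \{1,\ldots,r_i\},
\]
yield $\prod_i r_i = K$ closed hyperrectangles whose union covers $\prod_i [\underline{s}_i, \overline{s}_i] \supseteq \mS$. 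Intersecting with $\mS$ and adopting a tie-breaking rule on the shared faces (e.g.\ half-open intervals) gives a genuine partition $\{\mS_{\mathbf{k}}\}$ of $\mS$ into at most $K$ nonempty parts.

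For each nonempty $\mS_{\mathbf{k}}$, Lemma~\ref{lem:single_scenario} furnishes a single representative scenario whose approximation ratio equals $\max_i (\overline{s}_{\mathbf{k},i}/\underline{s}_{\mathbf{k},i})$, where $\overline{s}_{\mathbf{k},i}, \underline{s}_{\mathbf{k},i}$ denote the coordinate extremes over $\mS_{\mathbf{k}}$. Since $\mS_{\mathbf{k}} \subseteq R_{\mathbf{k}}$, this quantity is bounded above by $\max_i a_i^{(k_i)}/a_i^{(k_i-1)} = \max_i \rho_i$. Lemma~\ref{assump:scenario_bounds} tells us that the scaling factors $\alpha$ and $\beta$ of a partition combine as the maximum of the per-piece scaling factors, so taking the maximum over $\mathbf{k}$ gives
\[
t_K(\mS) \leq \max_{i=1,\ldots,m} \rho_i = \max_{i=1,\ldots,m}\left(\frac{\overline{s}_i}{\underline{s}_i}\right)^{1/r_i},
\]
as claimed, since $t_K$ is defined as a minimum over partitions and we have produced one.

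The main obstacle is mostly bookkeeping. One has to handle boundary overlaps between hyperrectangles and possibly empty intersections with $\mS$ so that the collection is a strict $K$-partition; if fewer than $K$ pieces are nonempty, one can pad by trivially subdividing a nonempty piece, which does not worsen the bound. The connectedness of each coordinate projection, combined with the compactness inherited from Lemma~\ref{assump:scenario_bounds}, ensures that the extremes $\underline{s}_i,\overline{s}_i$ are attained and that the geometric grid aligns with the support of $\mS$ without degenerate strata. The only substantive (as opposed to routine) step is the choice of a \emph{multiplicative} rather than additive grid: that is precisely what produces the exponent $1/r_i$ and makes the per-rectangle ratio uniform across the grid.
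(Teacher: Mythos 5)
Your construction is correct and takes essentially the same route as the paper: both proofs partition \(\mS\) by an axis-aligned hyperrectangle grid with \(r_i\) slices per dimension, use the componentwise-minimum corner of each piece as its representative (so \(\beta=1\) per piece), and combine the per-piece factors via maxima to get \(\max_i(\overline{s}_i/\underline{s}_i)^{1/r_i}\). The only difference is presentational: you write the geometric breakpoints \(\underline{s}_i\rho_i^k\) explicitly, while the paper keeps the subdivision generic and invokes Lemma~\ref{lem:one_dim_approx_bound} to supply the same per-dimension bound.
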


To prove Theorem~\ref{thm:approx_bound}, we first find the optimal ratio \(t(\mS)\) when the set \(\mS\) is one dimensional. 


\begin{lemma}
	\label{lem:one_dim_approx_bound}
	Let \(\mS\) be an interval in \(\mathbb{R}_{+}\) such that \(\mS = [\underline{s}, \overline{s}]\) with \(0 < \underline{s} < \overline{s}\). 
	Then, the optimal approximation guarantee of a partition of the set \(\mS\) is bounded from above by
	\[
	t_K(\mS) \leq \left(\frac{\overline{s}}{\underline{s}}\right)^{\frac{1}{K}}.
	\]
\end{lemma}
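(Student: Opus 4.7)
The natural plan is to exhibit an explicit partition that achieves the claimed bound, and then invoke Lemma~\ref{lem:single_scenario} on each piece. Since in one dimension Lemma~\ref{lem:single_scenario} says that the best single-representative approximation ratio for an interval $[c,d] \subseteq \R_{+}$ is exactly $d/c$, the task reduces to partitioning $[a,b]$ into $K$ subintervals whose endpoint ratios are all equal.

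\textbf{Construction.} I would define the geometric partition
\begin{equation*}
	a_k \;=\; a \cdot (b/a)^{k/K}, \qquad k = 0, 1, \ldots, K,
\end{equation*}
so that $a_0 = a$, $a_K = b$, and
\begin{equation*}
	\frac{a_k}{a_{k-1}} \;=\; (b/a)^{1/K} \qquad \text{for every } k = 1,\dots,K.
\end{equation*}
Take the partition $\mS_k = [a_{k-1}, a_k)$ for $k = 1, \ldots, K-1$ and $\mS_K = [a_{K-1}, a_K]$; this is clearly a disjoint cover of $[a,b]$.

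\textbf{Bounding the approximation ratio on each piece.} For each subinterval $\mS_k$, the projection max is $a_k$ and the projection min is $a_{k-1}$. By Lemma~\ref{lem:single_scenario} (in the one-dimensional case, i.e.\ $m=1$), choosing the representative scenario $\tilde{s}_k = a_{k-1}$ yields approximation quality $t(\tilde{s}_k,\mS_k) = a_k/a_{k-1} = (b/a)^{1/K}$. Since the overall approximation guarantee of the partition equals the maximum of the per-cluster guarantees (this is exactly the definition $t(\mS_1,\dots,\mS_K)$ derived from Lemma~\ref{assump:scenario_bounds}), we get
\begin{equation*}
	t_K(\mS) \;\leq\; t(\mS_1, \dots, \mS_K) \;\leq\; \max_{k=1,\dots,K}\frac{a_k}{a_{k-1}} \;=\; (b/a)^{1/K}.
\end{equation*}

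\textbf{Anticipated obstacles.} The argument is essentially a direct construction, so the mathematical content is light. The only mild subtleties to handle cleanly are: (i) ensuring the subintervals are genuinely disjoint (handled by the half-open convention above, which does not affect suprema/infima since the endpoints remain in the closure used by Lemma~\ref{assump:scenario_bounds}); and (ii) making sure that the definition of $t_K(\mS)$ as a minimum over \emph{all} partitions means that exhibiting one partition with the claimed guarantee is enough to obtain the $\leq$ bound. The theorem only claims an upper bound, so I do not need to prove optimality of this geometric partition, which keeps the proof short.
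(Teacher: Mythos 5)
Your construction is correct, and it delivers the stated upper bound: the geometric breakpoints $a_k = a\,(b/a)^{k/K}$ with left endpoints as representatives give $\alpha \le (b/a)^{1/K}$ and $\beta = 1$ on every piece, and since $t_K(\mS)$ is a minimum over all partitions, one good partition suffices. The route differs from the paper's in a meaningful way, though: the paper does not guess the partition but treats the breakpoints $w_1,\dots,w_{K-1}$ as variables, fixes the representatives to the left endpoints, and then solves the resulting min--max problem by a logarithmic substitution ($\hat{w}_i = \log w_i$, increments $\delta_i$ summing to $\log(b/a)$), which shows that equalizing the ratios — i.e.\ exactly your geometric partition — is optimal within this family. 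So the paper's argument buys a little more (it certifies that no other choice of interval breakpoints with left-endpoint representatives beats $(b/a)^{1/K}$, which explains where the bound comes from), while yours is shorter and entirely sufficient for the lemma as stated, since only an upper bound on $t_K(\mS)$ is claimed. One cosmetic remark: on the half-open pieces $[a_{k-1},a_k)$ the componentwise maximum is a supremum that is not attained, so rather than invoking Lemma~\ref{lem:single_scenario} verbatim you may prefer to verify directly that $s \le \alpha\, \tilde{s}_k$ and $\tilde{s}_k \le s$ hold for all $s \in \mS_k$ with $\tilde{s}_k = a_{k-1}$ and $\alpha = (b/a)^{1/K}$; this sidesteps any attainment issue and matches how Lemma~\ref{assump:scenario_bounds} is used elsewhere in the paper.
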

\begin{proof}
  To obtain an upper bound, we determine 
  the approximation ratio for any choice of partition.
Therefore, let the set \(\mS\) be divided into
\(K\) sub-intervals by the variables \(w_1,\dots,w_{K-1} \in [\underline{s}, \overline{s}]\) as
\begin{equation}
		\{[\underline{s}, w_1), [w_1, w_2), \dots, [w_{K-2}, w_{K-1}), [w_{K-1}, \overline{s}]\}.
\end{equation}
We can write the optimization problem over the partition created by these subintervals as 
\[
	t(\mS_1,\dots,\mS_K) = \min_{\newscen{1},\dots,\newscen{K}}\left(\max_{u=1,\dots,K} \;\frac{\overline{s}_u}{\newscen{u}}\right) \left(\max_{v=1,\dots,K} \;\frac{\newscen{v}}{\underline{s}_v}\right),
\]
where \(\overline{s}_u = w_u\) with \(w_K = \overline{s}\) and \(\underline{s}_v = w_{v-1}\) with \(w_0 = \underline{s}\).
Any feasible scenario yields an upper bound.
So we set \(\newscen{j} = \underline{s}_j\) for all \(j = 1,\dots, K\).
Then we obtain  
\[
	t(\mS_1,\dots,\mS_K) \leq \max_{u=1,\dots,K} \;\frac{\overline{s}_u}{\underline{s}_{u}} = \max_{u=1,\dots,K} \;\frac{w_u}{w_{u-1}}.
\]

This upper bound can be tightened by finding the optimal values of \(w_1,\dots,w_K\) to minimize the bound on \(t(\mS_1,\dots,\mS_K)\) which can be expressed as 
\begin{equation}
	\label{prob:max_single}
	\begin{aligned}
	\min_{w_1,\dots,w_{K-1}} &\; \max\left(\frac{w_1}{\underline{s}}, \frac{w_2}{w_1}, \dots, \frac{w_{K-1}}{w_{K-2}}, \frac{\overline{s}}{w_{K-1}}\right)\\
	\text{s.t.} \;\quad &\; \underline{s} \leq w_1 \leq \cdots \leq w_{K-1} \leq \overline{s}.
	\end{aligned}
\end{equation}
To solve this problem we replace the ratios \(\frac{w_1}{\underline{s}}, \frac{w_2}{w_1}, \dots\) by their logarithm.
This is valid since the logarithm function is monotonically increasing, all elements are positive, and \(\log(\max{a_1,\dots,a_n}) = \max(\log a_1, \dots, \log a_n)\).
We can write the objective function of the problem as
\begin{equation*}
	\begin{aligned}
	\max\left(\log \frac{w_1}{\underline{s}}, \log \frac{w_2}{w_1}, \dots, \log \frac{w_{K-1}}{w_{K-2}}, \log \frac{\overline{s}}{w_{K-1}} \right).
	\end{aligned}
\end{equation*}
This is equivalent to $\max\left(\log w_1 - \log \underline{s}, \log w_2 - \log w_1, \dots, \log \overline{s} - \log w_{K-1}\right)$.
Let \(\hat{w}_i = \log w_i\). 
Then, the objective can be expressed as 
\begin{equation*}
	\begin{aligned}
	\max\left(\hat{w}_1 - \log{\underline{s}}, \hat{w}_2 - \hat{w}_1, \dots, \log{\overline{s}} - \hat{w}_{K-1}\right).
	\end{aligned}
\end{equation*}
Define \(\delta_i = \hat{w}_i - \hat{w}_{i-1}\) with \(\hat{w}_0 = \log \underline{s}\) and \(\hat{w}_{K} = \log \overline{s}\).
Then the above problem is $\max\left(\delta_1, \delta_2, \dots, \delta_K\right)$.
The constraints can equivalently be expressed in terms of \(\delta_i\) as follows:
\begin{equation*}
	w_i \geq w_{i-1} \iff \delta_i \geq 0, \quad \text{and} \quad 	\underline{s} \leq w_1 \leq \dots w_{K-1} \leq \overline{s} \iff \sum_{i=1}^{K}\delta_i = \log \frac{\overline{s}}{\underline{s}} .
\end{equation*}
Thus, the problem can be written as
\begin{equation*}
\begin{aligned}
	\min_{\delta_i} \; \max\left(\delta_1, \delta_2, \dots, \delta_K\right)
	\;\text{s.t.} \; \sum_{i=1}^{K}\delta_i = \log \frac{\overline{s}}{\underline{s}},\; \delta_i \geq 0.
\end{aligned}
\end{equation*}
The above problem is permutation invariant and the maximization is performed over linear functions. 
Hence, all \(\delta\) variables are equal at optimality as exemplarily displayed in Figure~\ref{fig:log_scaling} for $K=4$.
Thus \(\delta_i^{*} = \frac{1}{K}\log \frac{\overline{s}}{\underline{s}} \) leads to an optimal objective of \(\frac{1}{K}\log \frac{\overline{s}}{\underline{s}}\).
Therefore, the optimal objective in terms of the original problem is given by \(\exp(\delta_i^*)\) for any \(i\).
This allows us to write
\[
t_K(\mS) \leq \exp \left(\frac{1}{K}\log \frac{\overline{s}}{\underline{s}} \right) = \left(\frac{\overline{s}}{\underline{s}}\right)^{\frac{1}{K}},
\]
which concludes the proof.
\end{proof}

\begin{figure}[htb]
	\centering
	\begin{tikzpicture}
		\draw[-] (1,1) -- (16,1);
		\foreach \x [count=\i from 0] in {$a$, $w_1$, $w_2$, $w_3$, $b$} { 
				\draw[thick] (2^\i,0.9) -- (2^\i,1.1);
				\node[below] at (2^\i,0.9) {\x};
			}
			
			\draw[-] (1,0) -- (16,0);
			\foreach \x [count=\n from 0] in {$\log a$, $\hat{w}_1$, $\hat{w}_2$, $\hat{w}_3$, $\log b$} {
				\pgfmathsetmacro\pos{1+3.75*\n} 
				\draw[thick] (\pos,-0.1) -- (\pos,0.1);
				\node[below] at (\pos,-0.1) {\x};
			}
			\foreach \x in {1,2,4,8} {
				\pgfmathsetmacro\pos1+ln(\x)/ln(10)*10} 
		
	\end{tikzpicture}
	\caption{\label{fig:log_scaling} Variable substitution in proof of Lemma~\ref{lem:one_dim_approx_bound} exemplarily for $K=4$.}
\end{figure}
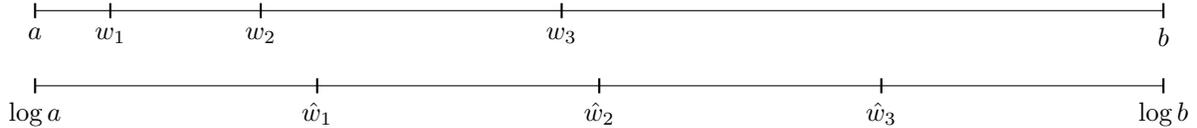
We proceed by providing a proof for Theorem~\ref{thm:approx_bound}, which leverages Lemma~\ref{lem:one_dim_approx_bound} along each dimension.

\begin{proof}
(\emph{Theorem~\ref{thm:approx_bound}}):
We obtain an upper bound for the optimal objective value of problem~\eqref{prob:opt_partition} by evaluating its value for a suitable partition.
To this end, we cover the set \(\mS\) with \(K\) hyperrectangles as in Figure~\ref{fig:set_partition}.
These hyperrectangles are constructed by partitioning the interval \([\underline{s}_i,\; \overline{s}_i]\) along each dimension \(i=1,\ldots,m\) into \(r_i\) subintervals such that each subinterval contains its left endpoint, but not its right endpoint, except the final subinterval, which contains \(\overline{s}_i\).
Given a partition \(S_1,\dots,S_K\) constructed by these hyperrectangles, we bound the objective value \(t(\mS_1,\dots,\mS_K)\).
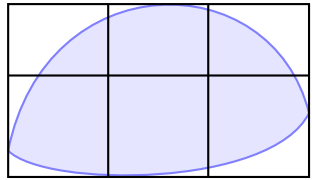
\begin{figure}[htb]
	\centering
	\begin{tikzpicture}
		\filldraw[fill=blue!20, draw=blue, thick, opacity=0.5] 
			(1,1) .. controls (1.5,3.5) and (4.5,3.5) .. (5,1.5)
			.. controls (4.5,0.5) and (1.5,0.5) .. (1,1);
		
		\draw[thick] (1,0.65) rectangle (5,2.95); 
		\draw[thick] (2.33,0.65) -- (2.33,2.95); 
		\draw[thick] (3.66,0.65) -- (3.66,2.95); 
		\draw[thick] (1,2) -- (5,2);           
		\end{tikzpicture}
	\caption{\label{fig:set_partition}Covering of a scenario set (blue region) with hyperrectangles where \(r_1 = 3\) and \(r_2 = 2\) for \(K = 6\).}
\end{figure}
	
By definition, we have 
\[
	t(\mS_1,\dots,\mS_K) = \min_{\newscen{1},\dots,\newscen{K}}\left(\max_{u=1,\dots,K} \max_{i = 1,\dots,m}\;\frac{\overline{s}_{ui}}{\newscen{ui}}\right) \left(\max_{v=1,\dots,K} \max_{l=1,\dots,m}\;\frac{\newscen{vl}}{\underline{s}_{vl}}\right),
\]
We obtain an upper bound by selecting \(\newscen{j} = \underline{s}_j\) for all hyperrectangles \(j=1,\dots,K\). 
This implies \(\newscen{vl} = \underline{s}_{vl}\) for all subsets \(v = 1,\dots,K,\) and all dimensions \(l=1,\dots,m\).
Thus, we write 
\[
	t(\mS_1,\dots,\mS_K) \leq \max_{u=1,\dots,K} \max_{i = 1,\dots,m}\;\frac{\overline{s}_{ui}}{\underline{s}_{ui}}.
\]
Now, we can exchange the two max terms and obtain 
\[
	t(\mS_1,\dots,\mS_K) \leq \max_{i = 1,\dots,m} \max_{u=1,\dots,K}\;\frac{\overline{s}_{ui}}{\underline{s}_{ui}}.
\]
While there are a total of \(K\) partitions along any particular dimension \(i\), there are only \(r_i\) effective partitions by construction in each dimension $i=1,\ldots,m$.
Equivalently, we state that for a fixed \(i\), there are only \(r_i+1\) distinct values of the ratio \(\frac{\overline{s}_{ui}}{\underline{s}_{ui}}\) over all \(u=1,\dots,K\).
Subsequently, we replace the index \(u\) with a new index \(w \in W_i\) with \(W_i = \{1,\ldots,r_i\}\), which iterates over the values in the index set \( W_i\) for all $i=1,\cdots,m$.
Then we obtain
\[
	t(\mS_1,\dots,\mS_K) \leq \max_{i = 1,\dots,m} \max_{w \in W_i}\;\frac{\overline{s}_{wi}}{\underline{s}_{wi}}.
\]

From Lemma~\ref{lem:one_dim_approx_bound}, we know that given a partition of size \(r_i\), the approximation ratio along any single dimension can be bounded above by \(
	\left(\frac{\overline{s}_{i}}{\underline{s}_{i}}\right)^{\frac{1}{r_i}}\).
	Therefore, we can write 
	\[
		t(\mS_1,\dots,\mS_K) \leq \max_{i = 1,\dots,m} \;\left(\frac{\overline{s}_{i}}{\underline{s}_{i}}\right)^{\frac{1}{r_i}}.
	\]
Let \(\overline{s}_i\) and \(\underline{s}_i\) be the upper and lower bounds of the projection of set \(\mS\) along dimension \(i\) and let \(r_i = \left(\frac{\overline{s}_i}{\underline{s}_i}\right)^{\frac{1}{r_i}}\).
Since this was for a partition constructed using hyperrectangles, we get
\[
		t_K(\mS) \leq \max_{i = 1,\dots,m} \;\left(\frac{\overline{s}_{i}}{\underline{s}_{i}}\right)^{\frac{1}{r_i}}.
	\]
This concludes the proof.

\end{proof}

The previous results discussed the issue of partitioning scenario sets. 
However, we made the key assumption that the projections of the set along each dimension are connected.
This assumption is similar to but weaker than requiring the scenario sets to be convex. 
It is needed because we want to prove a bound on the maximum component-wise ratio of two uncertain parameters in a set.  

However, proving such a bound for any arbitrary set can be quite challenging. 

As such, we prove that an upper bound for this ratio can be obtained by estimating it for an axis-parallel hyperrectangle containing the set.
Computing this ratio for a hyperrectangle is simpler. 
We can compute the maximum ratio for each axis independently and then take the maximum across all axes. 

The assumption allows us to construct this hyperrectangle by taking the product of the projections of the set along each dimension. 

If the projections are not connected, we do not obtain a single connected hyperrectangle for which to compute this ratio. 

In such a setting, however, our results remain meaningful, as we can focus on a single hyperrectangle containing all the pieces of a set.
Then we obtain a more conservative upper bound of the ratio.

The following section focuses on computing a partition for discrete scenarios.


\subsection{Optimal Partitioning for Discrete Scenarios}\label{Sec:Discrete_Scenarios}
Our previous results provide an upper bound on the best approximation ratio achievable for Theorem~\ref{thm:abapprox} when partitioning a set.
However, the calculated bound is not necessarily tight.
While calculating the optimal partition is challenging for an
arbitrary scenario set, the problem becomes computationally easier if we limit ourselves to random variables with discrete and finite support.
We next construct a mixed integer program to optimally partition a discrete scenario set and compute the optimal representative scenarios, which minimize the approximation guarantee \(\alpha \beta\) for Theorem~\ref{thm:abapprox}.

To calculate \(\alpha\) and \(\beta\), we directly use the inequalities from Lemma~\ref{assump:scenario_bounds} as componentwise constraints in the partitioning problem. 
Given a collection of discrete scenarios $\mS=\{s_1,\ldots,s_{|\mS|}\}$, the optimization problem for the computation of the reduced scenarios $\tS=\{\tilde{s}_1,\ldots,\tilde{s}_K\}$ while minimizing the approximation guarantee \(\alpha\beta\) given by Theorem~\ref{thm:abapprox}, can be formulated as
\begin{subequations} \label{Eq:Clustering_MINLP}
	\begin{align}
		\min_{\alpha, \beta,\tilde{s},z} \quad & \alpha\beta \label{new_MINLPa}\\
		\text{ s.t. } \hspace{0.3cm} &  s_i \leq  \alpha \tilde{s}_j + M(1-z_{ij})\quad \forall i=1,\ldots,|\mS|, j=1,\ldots,K, \label{new_MINLPb}
		\\
		& \tilde{s}_j \leq \beta s_i + M(1-z_{ij}) \quad \forall i=1,\ldots,|\mS|, j=1,\ldots,K, \label{new_MINLPc}	
		\\
		&\sum_{j=1}^{K} z_{ij} = 1 \quad \forall i=1,\ldots,|\mS|, \label{new_MINLPd}
		\\
		& \sum_{i=1}^{|\mS|}z_{ij} \geq 1 \quad \forall j=1,\ldots,K, \label{new_MINLPe}
		\\
		& \alpha,\beta\geq 0, \quad  z_{ij} \in \{0,1\} \quad \forall i,j.\label{new_MINLPf}
	\end{align}
\end{subequations}
In the problem described above, the binary variable \(z_{ij} = 1\) indicates that the original scenario \(s_i\) is assigned to the representative scenario \(\tilde{s}_j\) of cluster \(\tilde{S}_j\).  
The constraints~\eqref{new_MINLPb} and~\eqref{new_MINLPc} enforce the inequalities required for Theorem~\ref{thm:abapprox}.  
The included big-\(M\) terms \(M(1-z_{ij})\), where the real number \(M > 0\) is
sufficiently large, ensure that the constraints are enforced only when
scenario \(s_i\) is assigned to cluster \(\tilde{S}_j\) and redundant otherwise.  
Equations~\eqref{new_MINLPd} and inequalities~\eqref{new_MINLPe} enforce, respectively, that each scenario is assigned to exactly one cluster and that all clusters are non-empty.

In order to linearize the quadratic mixed-integer nonlinear program~\eqref{Eq:Clustering_MINLP} we can set w.l.o.g. \(\beta=1\) (or \(\alpha=1\)), since for a solution of~\eqref{Eq:Clustering_MINLP}, one can set \(\hat{s}\coloneqq \tilde{s}_j/\beta\). \(\hat{s}\) is also a solution of~\eqref{Eq:Clustering_MINLP} with \(\hat{\alpha}\coloneqq \alpha \beta\) and \(\hat{\beta}=1\).
Then, problem~\eqref{Eq:Clustering_MINLP} can be written as the mixed-integer linear program (with \(t\coloneqq 1/\alpha\))
\begin{subequations} \label{Eq:Clustering_MIP}
	\begin{align}
		\max_{t,\tilde{s},z} \quad & t \label{Eq:Clustering_MIPa}\\
		\text{ s.t.} \hspace{0.3cm} & t s_i \leq  \tilde{s}_j + M(1-z_{ij})\quad \forall i=1,\ldots,|\mS|, j=1,\ldots,K, \label{Eq:Clustering_MIPb}
		\\
		& \tilde{s}_j \leq  s_i + M(1-z_{ij}) \quad \forall i=1,\ldots,|\mS|, j=1,\ldots,K, \label{Eq:Clustering_MIPc}	
		\\
		&\sum_{j=1}^{K} z_{ij} = 1 \quad \forall i=1,\ldots,|\mS|, \label{Eq:Clustering_MIPd}
		\\
		& \sum_{i=1}^{|\mS|}z_{ij} \geq 1 \quad \forall j=1,\ldots,K, \label{Eq:Clustering_MIPe}
		\\
		& t \geq 0, \quad  z_{ij} \in \{0,1\} \quad \forall i,j.\label{Eq:Clustering_MIPf}
	\end{align}
\end{subequations}

The big-M constant for the constraints can be calculated independently
for every constraint and each component $k=1,\ldots,m$:
\begin{align*}
	\eqref{Eq:Clustering_MIPb}: M_k= s_{ik}, \quad
	~\eqref{Eq:Clustering_MIPc}: M_k= \max_{i=1,\ldots,|\mS|}\{s_{ik}\} - s_{ik}.
\end{align*}
In our experiments, the resulting big-\(M\) values remain moderate in size and so that the MIP solution remains numerically stable, 
as all scenario values are strictly positive and bounded (cf.~Assumption~\ref{assump:pos_scenario_sets}).

To summarize, in Section~\ref{sec:scenario_partitions} we have
developed an approach for optimal scenario reduction for the worst-case approximation guarantee $\alpha\beta$ as in Theorem~\ref{thm:abapprox}.
In order to construct the approximating DRO problem~\eqref{Eq:DRO_reduced}, we discuss next how to construct a new ambiguity set~$\tP$ over the reduced scenarios based on the original ambiguity set~$\mP$.
This is addressed in Section~\ref{sec:dimen_red_amb_sets}.



\section{Dimension Reduction of Ambiguity Sets}
\label{sec:dimen_red_amb_sets}
So far we have reduced the scenario set to provide the tightest possible approximation guarantee \(\alpha \beta\) of the reduced DRO problem~\eqref{Eq:DRO_reduced}.
We now proceed by discussing the structure of the ambiguity set~\(\tilde{\mP}\) over the reduced scenarios given by~\eqref{Eq:ambiguity_reduced} for a given original ambiguity set \(\mP\). 
This discussion is limited to the case of finitely many scenarios where we can derive explicit expressions of the reduced ambiguity set for confidence intervals and ellipsoidal ambiguity sets. 

For a given clustering $\{\mS_1,\ldots,\mS_K\}$ with \(\mS=\bigcup_{j=1}^{K}{\mS}_j\), \({\mS}_k \cap {\mS}_l = \emptyset\) for $k \neq l$, we construct the reduced ambiguity set as
\begin{equation}
	\tilde{\mP}= \left\{\tilde{p}\in[0,1]^{K} : \tilde{p}_j = \textstyle\sum_{\{ i\text{:}s_i \in \mS_j\}} p_i,\; p\in\mP\right\} = \left\{\tilde{p}\in[0,1]^{K} :\tilde{p} = Ap,\; p\in\mP\right\}, \label{Eq:reduced_ambiguity}
\end{equation}
where $A\in\{0,1\}^{K \times \vert \mS \vert}$ is a matrix mapping a scenario $s_i \in \mS$ to a cluster $\tilde{S}_j$ via the binary matrix entry $a_{ij}\in\{0,1\}$.
We note that the matrix $A$ has full rank, i.e., $rank(A)=K$.
In this section, we focus on ambiguity sets that maintain their geometrical structure under the linear transformation induced by \(A\).


%
%
%

\subsection{Interval Ambiguity Sets}\label{Sec:Confidence_Intervalls}

Let the original ambiguity set \(\mP\) consist of intervals, i.e. we can write
\begin{equation}
	\mP=\left\{p\in [0,1]^{|\mS|} : l \leq p \leq u,\; \textstyle\sum_{i=1}^{|\mS|} p_i=1\right\}. \label{Eq:box_ambiguity}
\end{equation}
For this ambiguity set, we prove the following result.
\begin{lemma}
	If the original ambiguity set is given by~\eqref{Eq:box_ambiguity}, then the reduced ambiguity set~\eqref{Eq:reduced_ambiguity} can be written as
	\begin{equation}
		\label{def:reduced_ambiguity_set}
		\begin{aligned}
			\tilde{\mP}
			=& \left\{\tilde{p} \in  [0,1]^{K}  : Al \leq \tilde{p} \leq Au,\; \textstyle\sum_{j=1}^{K}  \tilde{p}_j=1\right\},
		\end{aligned}
	\end{equation}
	where $A \in \{0, 1\}^{K \times \vert \mS \vert}$ is the matrix that maps the scenarios to their respective clusters.
\end{lemma}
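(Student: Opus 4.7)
The plan is to prove the set equality by establishing both inclusions. Let me denote the candidate set on the right-hand side of~\eqref{def:reduced_ambiguity_set} by $\tilde{\mP}'$, so the goal is to show $\tilde{\mP} = \tilde{\mP}'$.

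For the inclusion $\tilde{\mP} \subseteq \tilde{\mP}'$, I would pick any $\tilde{p} \in \tilde{\mP}$, so $\tilde{p} = Ap$ for some $p \in \mP$. Since $A$ has nonnegative entries, the componentwise inequalities $l \leq p \leq u$ are preserved under multiplication by $A$, yielding $Al \leq Ap = \tilde{p} \leq Au$. For the normalization, recall that each scenario is assigned to exactly one cluster, so every column of $A$ contains exactly one $1$ and zeros elsewhere; consequently $\mathbf{1}^\top A = \mathbf{1}^\top$, and thus $\sum_{j=1}^K \tilde{p}_j = \mathbf{1}^\top A p = \mathbf{1}^\top p = 1$. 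Together with $\tilde{p} \in [0,1]^K$, this shows $\tilde{p} \in \tilde{\mP}'$.

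The nontrivial direction is $\tilde{\mP}' \subseteq \tilde{\mP}$. Here I would take any $\tilde{p} \in \tilde{\mP}'$ and explicitly construct a disaggregated $p \in \mP$ satisfying $Ap = \tilde{p}$. The key observation is that the construction decouples over clusters: for each $j = 1,\dots,K$, I need to choose values $\{p_i : s_i \in \mS_j\}$ with $l_i \leq p_i \leq u_i$ and $\sum_{s_i \in \mS_j} p_i = \tilde{p}_j$. By definition of $A$, the cluster-wise bounds from $Al \leq \tilde{p} \leq Au$ read precisely $\sum_{s_i \in \mS_j} l_i \leq \tilde{p}_j \leq \sum_{s_i \in \mS_j} u_i$, which is the necessary and sufficient condition for such a disaggregation to exist. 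A clean way to produce one explicitly is to set
\begin{equation*}
    p_i = l_i + \lambda_j (u_i - l_i) \quad \text{for } s_i \in \mS_j,
\end{equation*}
where $\lambda_j \in [0,1]$ is chosen so that $\sum_{s_i \in \mS_j} p_i = \tilde{p}_j$; solving for $\lambda_j$ gives a well-defined value in $[0,1]$ exactly because of the interval constraint just derived (and if the denominator vanishes, then $l = u$ forces the trivial choice $p_i = l_i$). By construction $l_i \leq p_i \leq u_i$, and combining over all clusters yields $Ap = \tilde{p}$ and $\sum_i p_i = \sum_j \tilde{p}_j = 1$, so $p \in \mP$ and hence $\tilde{p} \in \tilde{\mP}$.

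The main obstacle is the disaggregation argument, but it reduces to a one-dimensional feasibility check per cluster, so the convex-combination formula above handles it cleanly without needing any heavier machinery. The only subtlety worth flagging is the degenerate case $l_i = u_i$ for all $s_i \in \mS_j$, which must be handled separately to avoid division by zero, but then $\tilde{p}_j$ is forced to equal $\sum_{s_i \in \mS_j} l_i$ and the disaggregation is unique.
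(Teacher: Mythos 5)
Your proof is correct, and it is in fact more complete than the one in the paper: the paper's proof of this lemma is essentially a one-line assertion that aggregating the probabilities according to $A$ yields the stated box description, leaving the nontrivial inclusion implicit. Your two-inclusion argument makes precise exactly what needs checking. The forward inclusion (nonnegativity of $A$ preserves $l\leq p\leq u$, and $\mathbf{1}^\top A=\mathbf{1}^\top$ since each column of $A$ has a single $1$, preserving the normalization) matches what the paper tacitly uses. The backward inclusion is the part the paper glosses over, and your cluster-wise disaggregation $p_i=l_i+\lambda_j(u_i-l_i)$ with $\lambda_j\in[0,1]$ chosen to match the cluster total $\tilde{p}_j$ is a clean way to establish it: the condition $Al\leq\tilde{p}\leq Au$ is exactly the per-cluster feasibility condition $\sum_{s_i\in\mS_j} l_i\leq\tilde{p}_j\leq\sum_{s_i\in\mS_j} u_i$, and since $l_i\leq p_i\leq u_i\subseteq[0,1]$ and $\sum_i p_i=\sum_j\tilde{p}_j=1$, the constructed $p$ indeed lies in $\mP$ with $Ap=\tilde{p}$. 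Your handling of the degenerate case $l_i=u_i$ on a cluster is also the right caveat. In short, what your approach buys is a verification that the projected set is not merely contained in the box description but coincides with it, which is the substance of the lemma; the paper's terser argument buys brevity at the cost of leaving that surjectivity-type step to the reader.
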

\begin{proof} By aggregating the probabilities for the clustered scenarios according to matrix~$A$, we obtain:
	\begin{equation*}
		\tilde{\mP}
		= \left\{\tilde{p}\in[0,1]^{K} :\tilde{p} = Ap,\; p\in\mP\right\}
		= \left\{\tilde{p} \in  [0,1]^{K}  : Al \leq \tilde{p} \leq Au,\; \textstyle\sum_{j=1}^{K}  \tilde{p}_j=1.\right\}
	\end{equation*}
	This concludes the proof.
\end{proof}

This result can also be extended to general polytopes.
Specifically, if the original ambiguity set is a polytope, then the reduced set is also a polytope.
This holds due to scenario reduction being equivalent to a linear transformation.
To obtain the reduced polytope, we can either use the lifted formulation or develop a simplified formulation using Fourier-Motzkin elimination applied to~\eqref{Eq:reduced_ambiguity}.
We do not go into further detail here.

\subsubsection{Ellipsoidal ambiguity sets}\label{Sec:ellipsoidal}
Another very popular modeling approach for the ambiguity set is the ellipsoidal ambiguity set. 
We can show that the reduced ambiguity set~$\tP$ is ellipsoidal if the original ambiguity set has this geometry.
For the remainder of this section, let our original ambiguity set be given by 
\[
\mP=\left\{p\in [0,1]^{|\mS|} : (p - p^0)^{\top}\Sigma^{-1}(p - p^0) \leq r^2,\;\; \textstyle\sum_{i=1}^{|\mS|} p_i=1\right\},
\]
where $\Sigma \in \mathbb{R}^{\vert \mS \vert \times \vert \mS \vert}$ is a symmetric positive definite matrix, $p^0 \in [0,1]^{\vert \mS \vert}$ and $r > 0$.

\begin{restatable}{lemma}{lemredellips}
	Let the original ambiguity set \(\mP\) be an ellipsoid on the probability simplex. Then the ambiguity set over the reduced scenarios is also an ellipsoid and is given by 
	\begin{equation*}
		\tilde{\mP}=\left\{\tilde{p}\in [0,1]^{K} : (\tilde{p} - A p^0)^{\top}\tilde{\Sigma}^{-1}(\tilde{p} - A p^0) \leq r^2,\;\; \textstyle\sum_{i=1}^{K} \tilde{p}_i=1\right\},
	\end{equation*}
	where \(\tilde{\Sigma} = A \Sigma A^{\top}\).
\end{restatable}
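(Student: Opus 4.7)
The plan is to establish the claimed equality via a double inclusion, exploiting two structural properties of the clustering matrix $A\in\{0,1\}^{K\times|\mS|}$: its rows have disjoint supports, so $A$ has full row rank $K$ and $\tilde{\Sigma}=A\Sigma A^\top$ inherits symmetric positive definiteness (hence invertibility) from $\Sigma$; and its columns each sum to one, i.e.\ $\mathbf{1}^\top A = \mathbf{1}^\top$, so the simplex constraint transfers automatically under $p\mapsto Ap$.

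For the inclusion $\tilde{\mP}\subseteq\{\,\cdot\,\}$, given $\tilde{p}=Ap$ with $p\in\mP$, I would set $q=p-p^0$ so that $\tilde{p}-Ap^0=Aq$. With the auxiliary matrix $C:=A\Sigma^{1/2}$ one computes
\begin{equation*}
(Aq)^\top \tilde{\Sigma}^{-1}(Aq) \;=\; (\Sigma^{-1/2}q)^\top \bigl[C^\top (CC^\top)^{-1} C\bigr]\,(\Sigma^{-1/2}q) \;\leq\; \|\Sigma^{-1/2}q\|^2 \;=\; q^\top \Sigma^{-1}q \;\leq\; r^2,
\end{equation*}
since $C^\top(CC^\top)^{-1}C$ is an orthogonal projector (the full-row-rank of $A$ is used here). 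For the reverse inclusion $\{\,\cdot\,\}\subseteq\tilde{\mP}$, I would exhibit the canonical preimage
\begin{equation*}
p^\star := p^0 + \Sigma A^\top \tilde{\Sigma}^{-1}(\tilde{p}-Ap^0),
\end{equation*}
which is the minimum Mahalanobis-norm solution of $Ap=\tilde{p}$. A direct substitution gives $Ap^\star=\tilde{p}$ and $(p^\star-p^0)^\top\Sigma^{-1}(p^\star-p^0)=(\tilde{p}-Ap^0)^\top\tilde{\Sigma}^{-1}(\tilde{p}-Ap^0)\leq r^2$.

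I expect the main obstacle to be verifying that $p^\star$ satisfies the remaining defining conditions of $\mP$, namely $\mathbf{1}^\top p^\star = 1$ and $p^\star\in[0,1]^{|\mS|}$. The simplex identity hinges on the natural compatibility $\mathbf{1}^\top\Sigma=0$ (the covariance acts on the tangent hyperplane of the simplex), under which the correction term is orthogonal to $\mathbf{1}$ and $\mathbf{1}^\top p^\star=\mathbf{1}^\top p^0=1$. The box constraints are the most delicate point: the minimum-norm preimage need not lie in $[0,1]^{|\mS|}$ in general. I would resolve this either by (i) arguing that for the radii $r$ of practical interest $\mP$ lies in the interior of the box so the constraints are non-binding throughout the fibre $\{p\in\mP:Ap=\tilde{p}\}$, or (ii) convexly combining $p^\star$ with a fixed box-feasible element of the fibre, which preserves both the ellipsoidal bound (by convexity of $\mP$) and box feasibility. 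A brief closing remark would note that, since the forward direction is unconditional, the reverse inclusion is the only place where these subtleties enter, and the stated equality then follows.
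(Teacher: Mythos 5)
Your ellipsoidal computation is essentially the paper's argument in compressed form. The paper proves a standalone lemma that the image of an ellipsoid under a full row-rank linear map is again an ellipsoid, by whitening $\Sigma=SDS^\top$ and then treating the sphere case in two steps: the forward inclusion via the bound $\|A^\top(AA^\top)^{-1}A\|\leq 1$ (your orthogonal-projector inequality with $C=A\Sigma^{1/2}$ is the same computation), and the reverse inclusion via the pseudoinverse $A^\top(AA^\top)^{-1}$, which after undoing the whitening is exactly your minimum-Mahalanobis-norm preimage $p^\star=p^0+\Sigma A^\top\tilde{\Sigma}^{-1}(\tilde{p}-Ap^0)$. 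The simplex constraint is handled in the paper just as you note: each column of $A$ has a single one, so $\mathbf{1}^\top Ap=\mathbf{1}^\top p$ and the constraint transfers forward. Up to this point you and the paper coincide.

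Where you go beyond the paper is in demanding that the preimage $p^\star$ also satisfy $\mathbf{1}^\top p^\star=1$ and $p^\star\in[0,1]^{|\mS|}$. The concern is legitimate (the image of an intersection need not equal the intersection of the images), but the paper's proof does not attempt this step at all; it only records the ellipsoid-image fact plus the forward transfer of the simplex constraint. Your proposed repairs, moreover, do not go through as stated. The identity $\mathbf{1}^\top\Sigma=0$ is incompatible with the paper's standing assumption that $\Sigma$ is symmetric positive definite (it would put $\mathbf{1}$ in the kernel of $\Sigma$, contradicting invertibility, which is needed for $\Sigma^{-1}$ in the definition of $\mP$), so the simplex identity for $p^\star$ cannot be obtained that way. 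Option (ii) is circular: for the convex combination to preserve the Mahalanobis bound you need the box-feasible element of the fibre to already lie in $\mP$, which is precisely the preimage you are trying to construct; combining $p^\star$ with a point outside the ellipsoid gives no control. So if you insist on the literal set equality including box and simplex constraints, you need an additional hypothesis such as your option (i) (radius small enough that the ellipsoid lies inside the simplex slice); otherwise your reverse inclusion has a gap exactly at the point you flagged, while the unconditional part of your argument reproduces the paper's proof.
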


For completeness, the proof is provided in the appendix. These two
classes of ambiguity sets are used in the computational results.

\section{Extension to Convex Quadratic Objectives}
\label{sec:extensions}
So far we have discussed the problem of scenario reduction with
objectives that that satisfy Assumption~\ref{assump:func_assumps}.
Due to the relevance of quadratic objectives, we focus our attention
on them next. 
We consider a convex quadratic objective 
	$f(x,Q) = x^\top Q x$,
with a symmetric positive definite matrix \(Q\in\R^{n\times n}\).
Similar to Assumption~\ref{assump:func_assumps}, we make use of the fact that for convex quadratic objectives we have the following implication,
\begin{equation*}
	f(x,Q) \leq \alpha f(x,\tilde{Q}) \quad \forall Q,\tilde{Q} \in \R^{n \times n} \text{ with } Q\preceq \alpha\tilde{Q},
\end{equation*}  
where $\preceq$ denotes the partial order defined by the convex cone of positive semidefinite matrices.

We model the stochasticity in the objective by assuming that each
scenario corresponds to a positive definite matrix \(Q_i\in\R^{n\times n}\) for \(i=1,\ldots,|\mS|\).
Given these scenarios, we state the following optimization problem to identify the set of reduced scenarios. 

\begin{theorem}
	The optimization problem for computing the matrix clusters $S_j$ and its representatives $\tilde{Q}_j$ that minimize the approximation guarantee \(\alpha\beta\) for the reduced DRO problem~\eqref{Eq:reduced_ambiguity} can be formulated as
	\begin{equation*}
		\begin{aligned}
			\min_{\alpha, \beta,\tilde{Q}_j,z} \quad & \alpha\beta \\
			\text{ s.t. } \hspace{0.45cm} &  Q_i \preceq  \alpha \tilde{Q}_j + M(1-z_{ij})\quad \forall i=1,\ldots,|\mS|, j=1,\ldots,K, 
			\\
			& \tilde{Q}_j \preceq \beta Q_i + M(1-z_{ij}) \quad \forall i=1,\ldots,|\mS|, j=1,\ldots,K, 	
			\\
			&\sum_{j=1}^{K} z_{ij} = 1 \quad \forall i=1,\ldots,|\mS|, 
			\\
			& \sum_{i=1}^{|\mS|}z_{ij} \geq 1 \quad \forall j=1,\ldots,K,
			\\
			& \alpha,\beta\geq 0,Q_j\succeq 0 \text{ }\forall j=1,\ldots,K,  z_{ij} \in \{0,1\}\text{ } \forall i,j.
		\end{aligned}
	\end{equation*}
\end{theorem}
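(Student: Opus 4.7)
The plan is to mirror the derivation of the mixed-integer program~\eqref{Eq:Clustering_MINLP} for the vector case in Section~\ref{Sec:Discrete_Scenarios}, but working with the Löwner partial order $\preceq$ on symmetric matrices instead of componentwise inequalities. First I would establish the matrix analog of Lemma~\ref{assump:scenario_bounds}: given a clustering $\{S_1,\dots,S_K\}$ of $\{Q_1,\dots,Q_{|\mS|}\}$ with strictly positive definite representatives $\tilde{Q}_j\succ 0$, the scalars
\[
\alpha_j = \lambda_{\max}\!\bigl(\tilde{Q}_j^{-1/2}\bigl(\max_{Q_i\in S_j} Q_i\bigr)\tilde{Q}_j^{-1/2}\bigr), \quad
\beta_j = \max_{Q_i\in S_j}\lambda_{\max}\!\bigl(Q_i^{-1/2}\tilde{Q}_j Q_i^{-1/2}\bigr)
\]
yield $Q_i\preceq\alpha_j\tilde{Q}_j$ and $\tilde{Q}_j\preceq\beta_j Q_i$ for all $Q_i\in S_j$, so that $\alpha=\max_j\alpha_j$ and $\beta=\max_j\beta_j$ are well-defined. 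This matches the structure exploited for the vector case.

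Next I would adapt the proof of Theorem~\ref{thm:abapprox} to $f(x,Q)=x^\top Q x$. The crucial observation is that $Q\preceq\alpha\tilde{Q}$ implies $x^\top Q x\leq\alpha\,x^\top\tilde{Q}x$ pointwise in $x$, so the same chain of inequalities used to obtain the $\alpha\beta$-approximation guarantee goes through verbatim: one decomposes the expectation over the clusters $S_j$, applies the upper bound $Q_i\preceq\alpha\tilde{Q}_j$ to replace the original scenarios by the representatives, uses optimality of $\tilde{x}$ for the reduced problem to switch to $x^*$, and finally applies $\tilde{Q}_j\preceq\beta Q_i$ to return to the original scenarios. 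This confirms that minimizing $\alpha\beta$ over all feasible clusterings and representative matrices yields the tightest guarantee obtainable via Theorem~\ref{thm:abapprox} in the quadratic setting.

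Having fixed the correct quantities to minimize, I would translate them into a mixed-integer semidefinite program. The binary variables $z_{ij}\in\{0,1\}$ encode assignment of $Q_i$ to cluster $j$, and the assignment constraints $\sum_j z_{ij}=1$, $\sum_i z_{ij}\geq 1$ are identical to~\eqref{new_MINLPd}--\eqref{new_MINLPe}. The componentwise bounds~\eqref{new_MINLPb}--\eqref{new_MINLPc} are replaced by the Löwner-order inequalities $Q_i\preceq\alpha\tilde{Q}_j+M(1-z_{ij})$ and $\tilde{Q}_j\preceq\beta Q_i+M(1-z_{ij})$, where the big-$M$ term is interpreted as $M(1-z_{ij})I$ for the identity $I\in\R^{n\times n}$. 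For $M$ chosen large enough (for instance $M\geq\max_i\lVert Q_i\rVert_2$ times a sufficient factor), the constraints become vacuous whenever $z_{ij}=0$ and coincide with the desired PSD bounds whenever $z_{ij}=1$, exactly as in the scalar derivation.

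The main obstacle, and the point I would justify most carefully, is this PSD big-$M$ interpretation: unlike the vector case where the big-$M$ constant can be computed coordinatewise from the scenario data, here one needs a spectral bound that dominates both $Q_i-\alpha\tilde{Q}_j$ and $\tilde{Q}_j-\beta Q_i$ in the Löwner order whenever the assignment is inactive, and I would argue this is possible by compactness of the scenario set together with $\tilde{Q}_j\succ 0$, paralleling the boundedness argument already used in Lemma~\ref{assump:scenario_bounds}. Once this is in place, the equivalence between the MISDP and the approximation-minimizing clustering is immediate, and the program may then be simplified (as was done in~\eqref{Eq:Clustering_MIP}) by fixing $\beta=1$ via the rescaling $\tilde{Q}_j\mapsto\tilde{Q}_j/\beta$, which linearizes the bilinear objective $\alpha\beta$ into $\alpha$.
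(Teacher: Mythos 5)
Your proposal is correct and follows essentially the same route as the paper: the paper's proof also rests on the equivalence \(Q_i \preceq \alpha \tilde{Q}_j \Leftrightarrow x^\top Q_i x \leq \alpha\, x^\top \tilde{Q}_j x\) for all \(x\), which lets the argument of Theorem~\ref{thm:abapprox} carry over verbatim, with the big-\(M\) assignment mechanism copied from the vector formulation~\eqref{Eq:Clustering_MINLP}. The spectral big-\(M\) bounds you flag as the delicate point are exactly what the paper supplies immediately after the theorem via \(M^{(1)}_i=\lambda^{\max}(Q_i)\) and \(M^{(2)}_i=\max_i\lambda^{\max}(Q_i)-\lambda^{\min}(Q_i)\), and the \(\beta=1\) rescaling matches the paper's subsequent derivation of~\eqref{Eq:MISDP}.
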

\begin{proof}
The reduced scenarios satisfy worst-case approximation guarantees if:
\begin{enumerate}
	\item There exists an \(\alpha>0\), such that for every cluster \(S_j\) we have \(Q_i  \preceq \alpha \tilde{Q}_j \quad \forall Q_i\in S_j\),
	\item There exists a \(\beta>0\), such that for every cluster \(S_j\) we have \(\tilde{Q}_j \preceq \beta Q_i\quad \forall Q_i \in S_j\).
\end{enumerate}
The proof proceeds analogously to the proof of Theorem~\ref{thm:abapprox}, since
\begin{align*}
	Q_i  \preceq \alpha \tilde{Q}_j &\Leftrightarrow  \alpha \tilde{Q}_j -  Q_i  \text{ is positive semidefinite}
	\\
	&\Leftrightarrow	x^\top (\alpha \tilde{Q}_j -  Q_i) x \geq 0 \quad \forall x\in \R^n 	
	\\
	&\Leftrightarrow	x^\top Q_i x \leq \alpha x^\top \tilde{Q}_j x  \quad \forall x\in \R^n.
\end{align*}
We skip further details here.
\end{proof}

Analogously to Section~\ref{Sec:Discrete_Scenarios}, we can set w.l.o.g $\beta=1$ and rewrite the optimization problem.
The optimization problem for the computation of the clusters \(\tilde{s}_j\) with \(z_{ij}\) denoting the membership of scenario $i$ to cluster $j$, and \(t\coloneqq \frac{1}{\alpha}\) can be formulated as
\begin{subequations}\label{Eq:MISDP}
	\begin{align}
		\max_{t,\tilde{Q}_j,z} \quad & t \label{Eq:MISDPa}\\
		\text{ s.t. } \hspace{0.3cm} &  tQ^i \preceq  \tilde{Q}_j + M_i^{(1)}(1-z_{ij})\quad \forall i=1,\ldots,|\mS|, j=1,\ldots,K, \label{Eq:MISDPb}
		\\
		& \tilde{Q}_j \preceq  Q_i + M_i^{(2)}(1-z_{ij}) \quad \forall i=1,\ldots,|\mS|, j=1,\ldots,K, 	\label{Eq:MISDPc}
		\\
		&\sum_{j=1}^{k} z_{ij} = 1 \quad \forall i=1,\ldots,|\mS|, \label{Eq:MISDPd}
		\\
		& \sum_{i=1}^{K}z_{ij} \geq 1 \quad \forall j=1,\ldots,K, \label{Eq:MISDPe}
		\\
		& t\geq 0,Q_j\succeq 0 \text{ }\forall j=1,\ldots,K,  z_{ij} \in \{0,1\}\text{ } \forall i,j. \label{Eq:MISDPf}
	\end{align}
\end{subequations}
The matrix clustering problem is a mixed-integer semidefinite program (MISDP) that can be solved by modern software tools such as the SCIP-SDP plugin for SCIP~\cite{SCIP}.
Valid values for constants $M_i^{(1)}$, $M_i^{(2)}$ for $i=1, \ldots, \vert \mS \vert$ in constraints~\eqref{Eq:MISDPb} and~\eqref{Eq:MISDPc} are given by
\begin{equation}
	M^{(1)}_i= \lambda^{\text{max}}(Q_i), \quad\quad
	M^{(2)}_i= \max_{i=1,\ldots,|\mS|}\{\lambda^{\text{max}}(Q_i)\} - \lambda^{\text{min}}(Q_i),
\end{equation}
where \(\lambda^{\text{max}}(Q)\) and \(\lambda^{\text{min}}(Q)\) denote the largest and smallest eigenvalues of matrix $Q$, respectively.

Solving optimization problem~\eqref{Eq:MISDP} can be time-consuming, especially in high dimensions and for a large number of scenarios.
As a heuristic, we can perform clustering by the $k$-means algorithm with a suitable matrix norm, e.g., the Frobenius norm.
For a given clustering, we can compute an upper bound for the resulting approximation guarantee
\begin{equation}
	\alpha_j = \max_{\{i\colon Q_i \in \mS_{j}\}}\frac{ \lambda^{\text{max}}(Q_i) }{ \lambda^{\text{min}}(\tilde{Q}_i)  }, \quad
	\beta_j = \max_{\{i\colon \tilde{Q}_{i} \in \mS_{j}\}}\frac{ \lambda^{\text{max}}(\tilde{Q}_{j}) }{ \lambda^{\text{min}}(Q_{i})  }, \label{eq:eigvals}
\end{equation}
for each cluster \(j=1,\dots,K\).
The overall worst-case approximation guarantee is given by \(\max_j\alpha_j \beta_j\).
We use the fact here, that \(\lambda^{\text{max}}(A)\leq\lambda^{\text{min}}(B)\) implies \(A\preceq B\) for symmetric matrices.
We also note that this is only an upper bound due to this sufficient criterion.

By combining the results from Sections~\ref{Sec:Discrete_Scenarios} and~\ref{sec:extensions}, one can also further extend our methodology to reduce scenarios for uncertain objective functions of the form $f(x,s)=x^\top Q x + s^\top x$.

While our mixed-integer formulations (MIP for linear objectives and MISDP for quadratic objectives) allow for computing optimally reduced scenario sets with formal approximation guarantees, they are not designed for large-scale use. Solving them to optimality becomes computationally demanding as the number of scenarios and the problem dimension grow. However, this exact formulation serves an important purpose: it provides a benchmark to assess the quality of heuristic scenario reduction methods such as \emph{k-means}. In practice, we recommend using such heuristics for scalability, while relying on the optimal formulation primarily for small to medium-sized problems or when a certified approximation guarantee is needed.



\section{Computational Experiments}
\label{sec:experiments}

In this section, we evaluate the performance of our methods through numerical experiments on distributionally robust mixed-integer linear and quadratic optimization problems.
To highlight the broad applicability of our approach, we extract linear instances from the well-known MIPLIB benchmark library~\cite{furini2018online}.
For the quadratic case, we solve a robust portfolio optimization problem, utilizing real-world financial data.
We provide further details on how we construct the DRO instances for the two testcases in Sections~\ref{Section:MIPLIB} and~\ref{sec:port_opt}.

Given a DRO instance, we reduce the problem size by solving the clustering MIP~\eqref{Eq:Clustering_MIP} for linear objectives and the MISDP~\eqref{Eq:MISDP} for quadratic objectives.
We benchmark our approach against the well-known \emph{k-means} algorithm~\cite{kmeans_macqueen}, using the Euclidean norm for vectors and the Frobenius norm for matrices. 
The selection of representative scenarios for the clusters is not unique, as discussed in Section~\ref{sec:optimal_representative}.
For mixed-integer linear test cases, we select as cluster representatives, the mean of all scenarios within a cluster, projected onto the diagonal of the \( d \)-dimensional hyperrectangle defined by the component-wise minimum and maximum of all scenarios in the cluster.
This choice is valid since the approximation guarantee holds for all points on the diagonal, as shown in Corollary~\ref{coro:diagonale}.
For matrix clustering, the cluster representatives are determined by solving~\eqref{Eq:MISDP} with \(\beta=1\).
In the case of \emph{k-means}, the representatives are directly obtained by the algorithm as the mean of all scenarios within a cluster.

The primary goal of our approach is to improve the tractability of distributionally robust optimization (DRO) problems by reducing the number of scenarios.
To evaluate its effectiveness, we use three performance measures that capture different aspects of the trade-off between computational efficiency and solution quality.

First, the \emph{time factor} (TF) quantifies the speed-up achieved by our scenario reduction method.
It is defined as the ratio between the time required to solve the reduced DRO problem and the time needed for the original problem without reduction.
A TF significantly smaller than one indicates substantial runtime improvements, which is especially relevant for large-scale applications.
Second, the \emph{approximation factor} (AF) captures the impact of scenario reduction on solution quality.
It is defined as the ratio of the worst-case objective value over the original scenarios obtained from the reduced DRO problem to the one from the original problem.
An AF close to one implies that the reduction has little effect on the solution quality, while larger values indicate a loss in objective value due to reduced uncertainty representation.
Third, we introduce the \emph{scenario reduction factor} (SRF), which measures the compression ratio of the scenario set.
It is defined as the size of the original scenario set divided by the size of the reduced set.
Higher values of SRF correspond to a larger reduction in prolems size.

Formally, these performance metrics are given by:
\begin{align}
	\text{Time Factor (TF)} &= \frac{\text{Time to solve \eqref{Eq:DRO_reduced} with reduced scenario set}}{\text{Time to solve \eqref{Eq:DRO} with original scenario set}},\label{Eq:TF}\\
	\text{Approximation Factor (AF)} &= \frac{\text{Worst-case obj. val. of \eqref{Eq:DRO_reduced} with reduced scenario set}}{\text{Worst-case obj. val. of~\eqref{Eq:DRO} with original scenario set}},\label{Eq:AF}\\
	\text{Scenario Reduction Factor (SRF)} &= \frac{\text{Size of original scenario set}}{\text{Size of reduced scenario set}} = \frac{|\mS|}{|\tS|}.\label{Eq:SRF}
\end{align}
In Section~\ref{sec:port_opt}, we also conduct an out-of-sample evaluation to assess the generalization quality of the reduced models.
Here, we compute the objective value using unseen data (e.g., future years in the stock return time series) and compare it to the in-sample performance.

From now on, we refer to instances where scenarios are clustered by solving the MIP~\eqref{Eq:Clustering_MIP} or the MISDP~\eqref{Eq:MISDP} as \emph{opt}.
We calculate these factors for both scenario reduction methods, i.e., \emph{k-means} and \emph{opt}.

\subsection{MIPLIB instances}\label{Section:MIPLIB}
All computations in this section were performed using a Python implementation on machines equipped with an Intel® Xeon Gold 6326 CPU @ 2.9 GHz with 16 cores and 256 GB RAM.
We employed SCIP~9.0.0~\cite{SCIP} as the MIP solver.  
We also provide an alternative clustering approach based on the well-known \emph{k-means} algorithm~\cite{kmeans_macqueen}, using the scikit-learn~1.3.2 implementation~\cite{scikit-learn} with default parameters.
This implementation heuristically solves the problem of finding the best centroids with initializations based on kmeans++.

We demonstrate the effectiveness of our scenario reduction approach through numerical experiments on mixed-integer linear programs.  
We conduct experiments on a wide variety of problem instances and compare the performance of the solutions obtained using both the original and the reduced set of scenarios.  
Subsequently, the solutions are evaluated based on their objective values and the time required to solve the optimization problem.

To validate the performance of our approach, we used publicly available instances from the well-known benchmark library MIPLIB~\cite{gleixner2017miplib}.
We sorted the problems in the library by the number of variables and selected the smallest test instances that satisfied Assumption~\ref{assump:func_assumps} and could be solved within one hour of CPU time.
We generated different scenarios by perturbing the coefficients in the objective function uniformly at random by a multiplicative factor \(s_{inc}\).
The 17 instances selected are listed in Table~\ref{Table:overview_mip} in the appendix.
Consequently, the perturbed scenarios take random values in the interval 
$[(1-s_{inc})\cdot \bar{s},\ (1+s_{inc})\cdot \bar{s}]$,
where \(\bar{s}\in \R^n\) represents the original cost vector.
To construct ambiguity sets as described in Section~\ref{Sec:Confidence_Intervalls}, we compute multi-dimensional confidence intervals by drawing up to 10,000 realizations of the scenarios from an initially generated probability distribution \(p^*\).
Based on these realizations, we employ the analytic formula from~\cite{fitzpatrick1987quick} to construct the multi-dimensional confidence intervals:
\begin{equation}
	[l,u] = \left[\hat{p}_N - \frac{z_{\frac{\delta}{2}}}{2\sqrt{N}},\ \hat{p}_N + \frac{z_{\frac{\delta}{2}}}{2\sqrt{N}}\right].\label{conf_interval}
\end{equation}
Here, \(\hat{p}_N\) denotes the empirical probability distribution of \(p^*\) based on \(N\) samples, and \(z_{\frac{\delta}{2}}\) is the upper \((1-\delta/2)\)-percentile of the standard normal distribution.
In our experiments, we set the confidence level of the ambiguity sets to \(1-\delta = 0.9\).
Results for ellipsoidal ambiguity sets, constructed using the \(\ell_2\)-norm as described in Subsection~\ref{Sec:ellipsoidal}, exhibit similar characteristics and are therefore omitted here, but are displayed in the appendix.

\begin{figure}[H]
	\centering
	\includegraphics[width=0.9\linewidth]{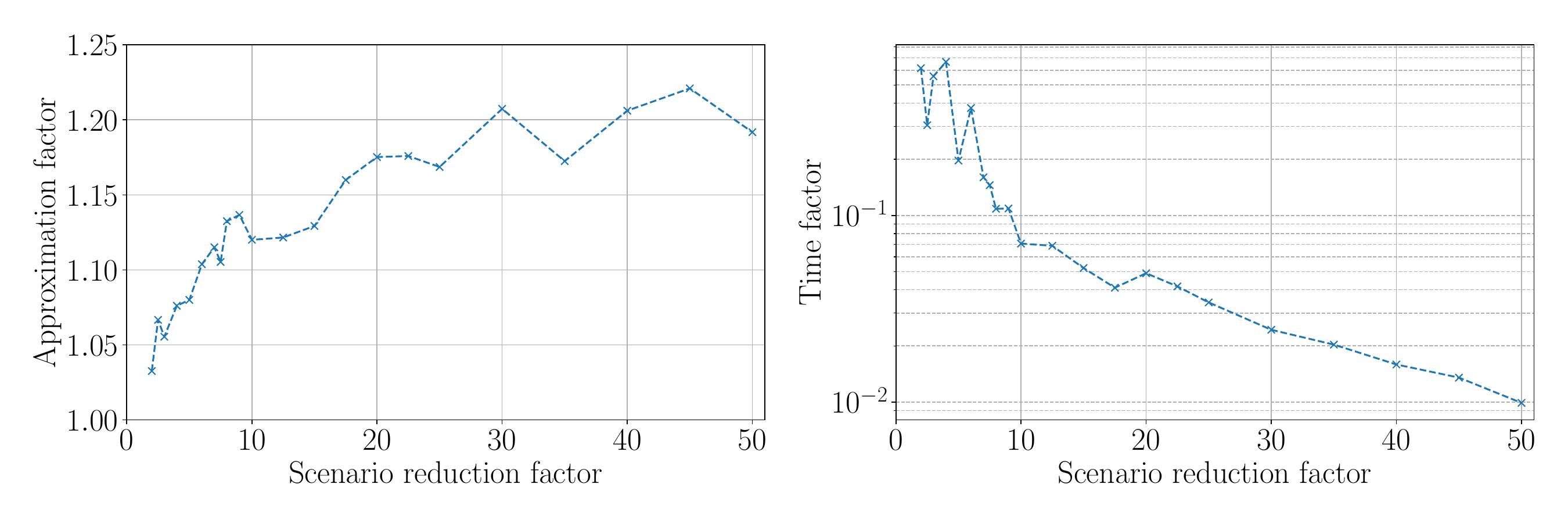}
	\caption{\label{fig:af_srf_0}The mean approximation factor (left) and time factor (right) in dependence of the scenario reduction factor for $N=0$ samples for \emph{opt} for the instance \emph{app2-2}.}
\end{figure}

The number of scenarios is varied in increments of 5, ranging from 5 to 50, while the number of clusters is set to 1, 2, or 5 (the case of reducing from 5 to 5 scenarios is omitted).
Thus, each instance is characterized by a unique combination of a MIPLIB instance, the number of scenarios, the number of reduced scenarios, and the value of \(s_{inc}\), which can be \(0.5\), \(0.75\), or \(0.9\).
Each such combination is solved for 10 different random initializations (seeds).

To evaluate the performance of the \emph{opt} and the \emph{k-means} approaches, we select DRO instances based on \emph{app2-2} from the MIPLIB library, using the probability simplex as the ambiguity set, as this instance exhibits the largest approximation errors, while other instances yield comparable or better results.
Figure~\ref{fig:af_srf_0} presents the mean approximation factor (AF) and the mean time factor (TF) as functions of the scenario reduction factor (SRF) for scenario reduction with \emph{opt}.
The mean is computed over all values of \(s_{inc}\) and averaged across all \(10\) random seeds.  
As expected, a higher scenario reduction factor, meaning a stronger reduction in the number of scenarios, leads to an increase in the approximation factor on average.  
For large scenario reduction factors such as \(45\), the approximation factor reaches approximately \(1.22\), indicating that reducing the number of scenarios results in a deterioration of up to \(22\%\) in the DRO objective function value compared to the original problem.
\emph{Opt} aims to find scenarios that minimize the approximation guarantee~\eqref{new_MINLPa}, which is an upper bound for the approximation factor. 
The solution time required to solve the DRO problem decreases significantly as the reduction factor increases due to the smaller problem size.
By balancing computational effort and accuracy, the solution time can be reduced by up to \(99\%\) for a scenario reduction factor of \(50\), while maintaining a relatively moderate approximation error.
Scenario reduction via \emph{$k$-means} leads to a similar reduction in solution time.

\begin{figure}[H]
	\centering
	\includegraphics[width=0.9\linewidth]{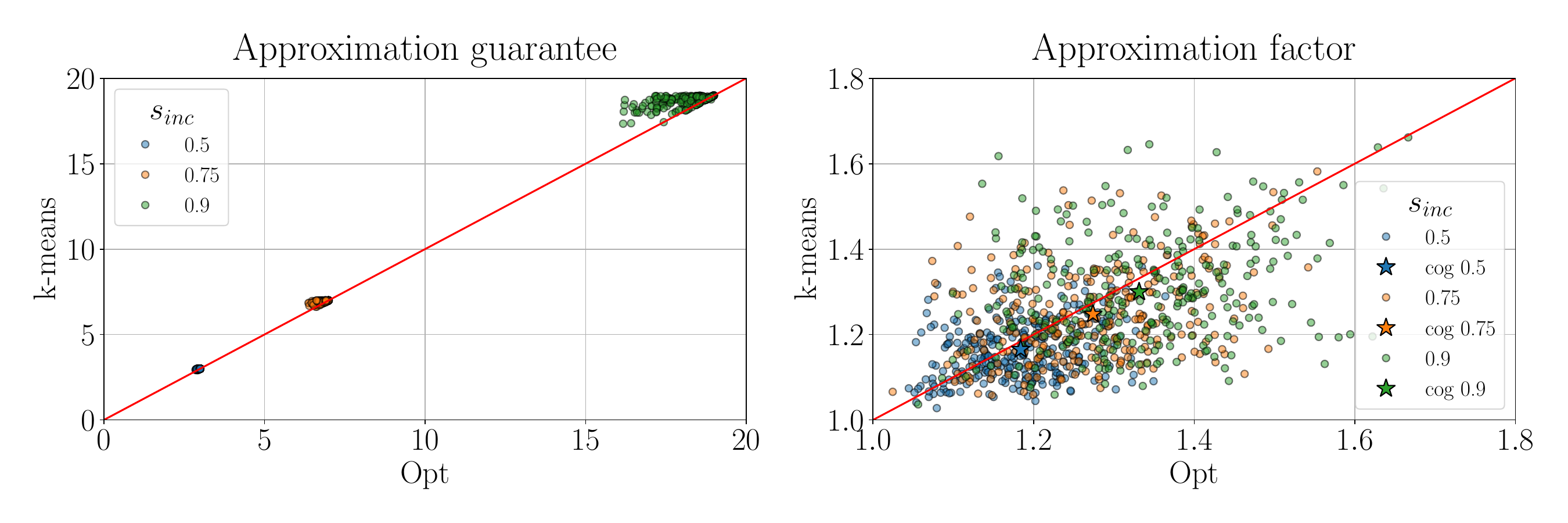}
	\caption{\label{fig:s_incr}The approximation guarantees (left) and the realized approximation factors (right) with the red line indicating the line through the origin for the instance \emph{app2-2}.
		The coloring of the dots indicates the different values of $s_{inc}$ used in the creation of the scenarios.
		The cogs indicate the average AFs for the three values of $s_{inc}$.}
\end{figure}

Next, we analyze the influence of \(s_{inc}\), which describes the magnitude of disturbance, on the approximation factor by plotting AF for \emph{opt} against AF for \emph{k-means}.  
The left scatter plot in Figure~\ref{fig:s_incr} reports the approximation guarantees given by the clustering problem~\eqref{Eq:Clustering_MINLP}, while the right plot displays the realized approximation errors measured by the approximation factor as defined in~\eqref{Eq:AF}.  
As before, we select the instance \emph{app2-2} with \(0\) samples.  
In both plots, the AFs for \emph{opt} and \emph{k-means} are plotted on the \(x\)- and \(y\)-axes, respectively.  
This representation allows for a direct comparison of both clustering methods: for any data point located below the red diagonal, \emph{k-means} yields a lower AF than \emph{opt}, while the opposite holds for points above the diagonal.  
Focusing on the left plot, we observe two key findings:  
First, all data points lie above the diagonal, which is expected, since \emph{opt} minimizes the worst-case AF.  
Second, \(s_{inc}\) has a strong impact on the worst-case AF.  
Although the influence of \(s_{inc}\) on the worst-case AF is substantial, its effect diminishes in practice, as seen in the right plot.  
Nevertheless, the impact remains noticeable for both clustering algorithms.  
There is no clear indication that either \emph{opt} or \emph{k-means} consistently achieves lower AFs.  
The gap between approximation guarantee and realized AFs is large, since the clustering process does not incorporate information about the ambiguity set or the feasible set $\mX$.  
Also, in many cases \emph{k-means} achieves similar results, as indicated by the cogs lying close to the diagonal in the right plot of Figure~\ref{fig:s_incr}.

\begin{figure}[H]
	\centering
	\includegraphics[width=0.9\linewidth]{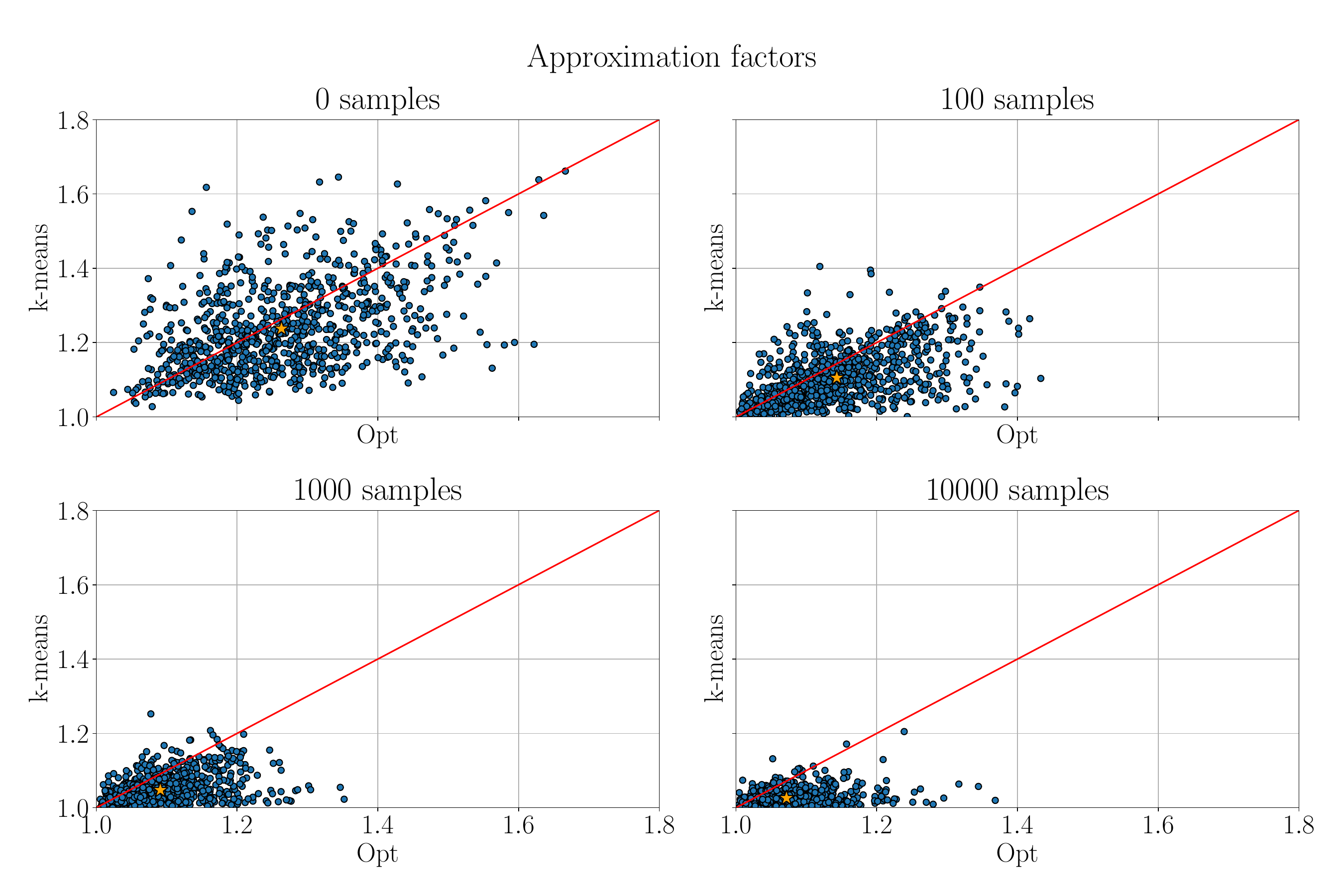}
	\caption{\label{fig:samples}Scatter plot of approximation factors for \emph{opt} and \emph{k-means} for the instance \emph{app2-2} with varying numbers of samples. The red line represents the diagonal through the origin, while the orange star marks the center of gravity of all displayed instances.}
\end{figure}

In Figure~\ref{fig:samples}, we plot the approximation factors for scenario reduction at different ambiguity set sizes, i.e., sets created using different numbers of sampled points (parameter \(N\) in~\eqref{conf_interval}).  
We vary the sample sizes in increments of \num{0}, \num{100}, \num{1000}, and \num{10000}.  
With a sample size of \(N=0\), we robustify against the entire probability simplex as the uncertainty set.  
Again, the red line passes through the origin and separates regions where \emph{opt} exhibits higher (above the line) or lower (below the line) approximation factors (AF).  
The center of gravity (cog) (orange) represents the average of all dotted points.  
As the number of samples increases, both the point cloud and the cog move towards the point \((1,1)\), indicating that the confidence intervals \eqref{conf_interval} shrink as more data is incorporated.  
This effect is observed for both clustering algorithms but is more pronounced for \emph{k-means} than for \emph{opt}.  
Since the cog remains below the line through the origin for all sample sizes, \emph{k-means} consistently achieves lower AFs than \emph{opt} on average.  

Finally, we report the average time needed to solve the MIP \eqref{Eq:Clustering_MIP} and the average time factor in Figure~\ref{fig:cluster_time} as a function of the number of initial scenarios for all 17 tested instances for \emph{opt} with one standard error.
Each curve represents the reduction to a different number of scenarios.
In the left plot, we see that the clustering time increases linearly with the number of scenarios.
The number of clusters, however, has a more significant impact on clustering time as indicated by comparing the line of $5$ clusters (green) to the line of one cluster (blue).
Initially, for 5 reduced scenarios, the solving time increases linearly.
Afterwards it levels off, since clustering times over 3600s were removed.
Comparatively, the median clustering time for \emph{opt} was much smaller at $8.4$~s.
For our experimental data, the TF decreases exponentially with the number of scenarios as already discussed in~\ref{fig:af_srf_0}, but gets larger with increasing number of reduced scenarios.
As can be expected, reductions to a small number of scenarios equate to the largest benefit in time reduction.

\begin{figure}[H]
	\centering
	\includegraphics[width=0.9\linewidth]{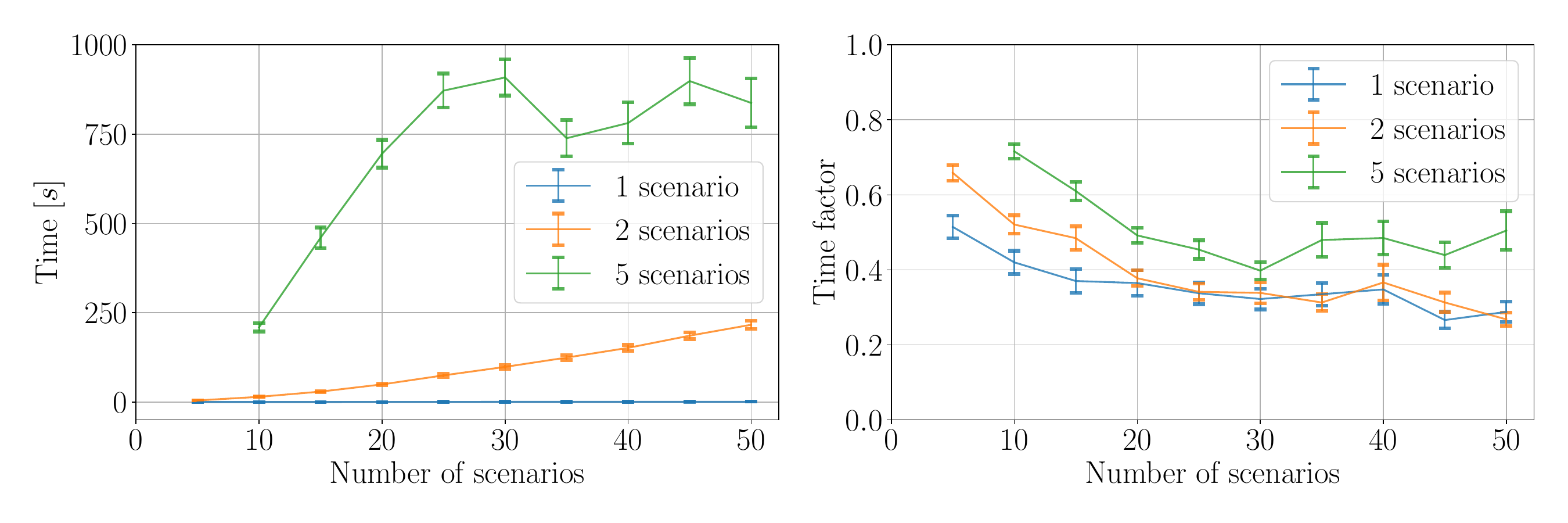}
	\caption{\label{fig:cluster_time}Clustering time in $s$ for solving the MIP \eqref{Eq:Clustering_MIP} (left) and the time factor (right) as a function of the number of scenarios for all tested instances.
		Each curve indicates a reduction to a different number of scenarios.}
\end{figure}

To summarize, the approximation factor increases with both the scenario reduction factor and \(s_{inc}\).
By design, \emph{opt} provides a smaller approximation guarantee than \emph{k-means}.
However, since the guarantee is independent of the ambiguity set and the feasible set~$\mX$, the observed approximation factors when solving a concrete MIPLIB instance are significantly lower than the theoretical bounds in Theorem~\ref{thm:abapprox}.  
Additionally, the AF decreases for both algorithms as the number of samples increases, since the confidence intervals in \eqref{conf_interval} shrink.  
Regarding the time factor, we observe an exponential decrease as a function of the scenario reduction factor and the total number of scenarios.  
Increasing the number of clusters affects both the time factor and the computational effort required for solving the clustering problem in \eqref{Eq:Clustering_MIP}.
As expected, the greatest time savings occur when reducing a large number of scenarios to a small number of clusters.  
Our numerical experiments show that the time required to solve an instance from MIPLIB can be reduced by a factor of up to 100, while the approximation factor remains below 1.35.  
We observe no significant differences in time factors when comparing the \emph{opt} and \emph{k-means} algorithms.  
Due to its low computational cost, \emph{k-means} is generally a well-suited approach for scenario reduction.  
However, if a theoretical approximation bound is required, scenario reduction via \emph{opt} is preferable.  

In our experimental evaluation, we restrict the comparison of heuristic methods to the well-known \emph{k-means} algorithm. 
The purpose of including \emph{k-means} is to provide a scalable surrogate for the exact mixed-integer clustering formulation, which serves as the theoretically grounded reference method with provable approximation guarantees. 
Thus, \emph{k-means} is not intended as a competitor among many heuristics, but as a representative scalable approximation of the exact approach when solving the MIP/MISDP becomes impractical. 
Including additional clustering heuristics would primarily amount to comparing different approximations of the same exact formulation, without adding further conceptual insight.

\subsection{Nonlinearly distributed scenarios}
To better understand the performance of the \emph{opt} and \emph{kmeans} methods, we extend our numerical calculations to objectives of the form
\begin{align}\label{eq:power}
	f(x, s) = x^\top s^\rho
\end{align}
for some exponent $\rho>1$ where we raise every component of $s$ to the power $\rho$.
These experiments are conducted on the \emph{MIPLIB} instance \emph{app2-2} as we see here the largest approximation errors in our experiments.
Scenarios $s$ are again generated uniformly at random, as described in Section~\ref{Section:MIPLIB}.

Figure~\ref{fig:bothplots} (left) presents the average approximation factors for opt and $k$-means as functions of the power~$\rho$ of the scenario~$s$ in the objective~\eqref{eq:power}.
The bars capture the standard error for each value of~$\rho$.

For $p \leq 1.5$, the approximation factor lies between 1.1 and 1.15 for both opt and $k$-means.
As $\rho$ increases, the approximation factors also rise, with \emph{opt} consistently yielding lower average values across all tested powers.
For larger values of~$\rho$, the advantage of \emph{opt} becomes more pronounced, as evidenced by the substantial gap in approximation factors at $\rho = 4$.

The superior performance of \emph{opt} arises from the nonlinearity of the objective:
raising~$s$ to a power~$\rho$ transforms the uniformly distributed scenarios into a nonlinear distribution in $\mathbb{R}^{|\mathcal{S}|}$, thereby reducing the effectiveness of the average-based scenario reduction performed by \emph{k-means}. 

Since \emph{opt} minimizes the worst-case approximation ratio on a logarithmic scale, it achieves greater robustness under these nonlinear transformations.
We therefore conjecture that \emph{opt} yields lower approximation factors in cases involving highly nonlinear scenario distributions. 

\begin{figure}[H]
	\centering
	\begin{subfigure}[t]{0.48\textwidth}
		\centering
		\includegraphics[width=\linewidth]{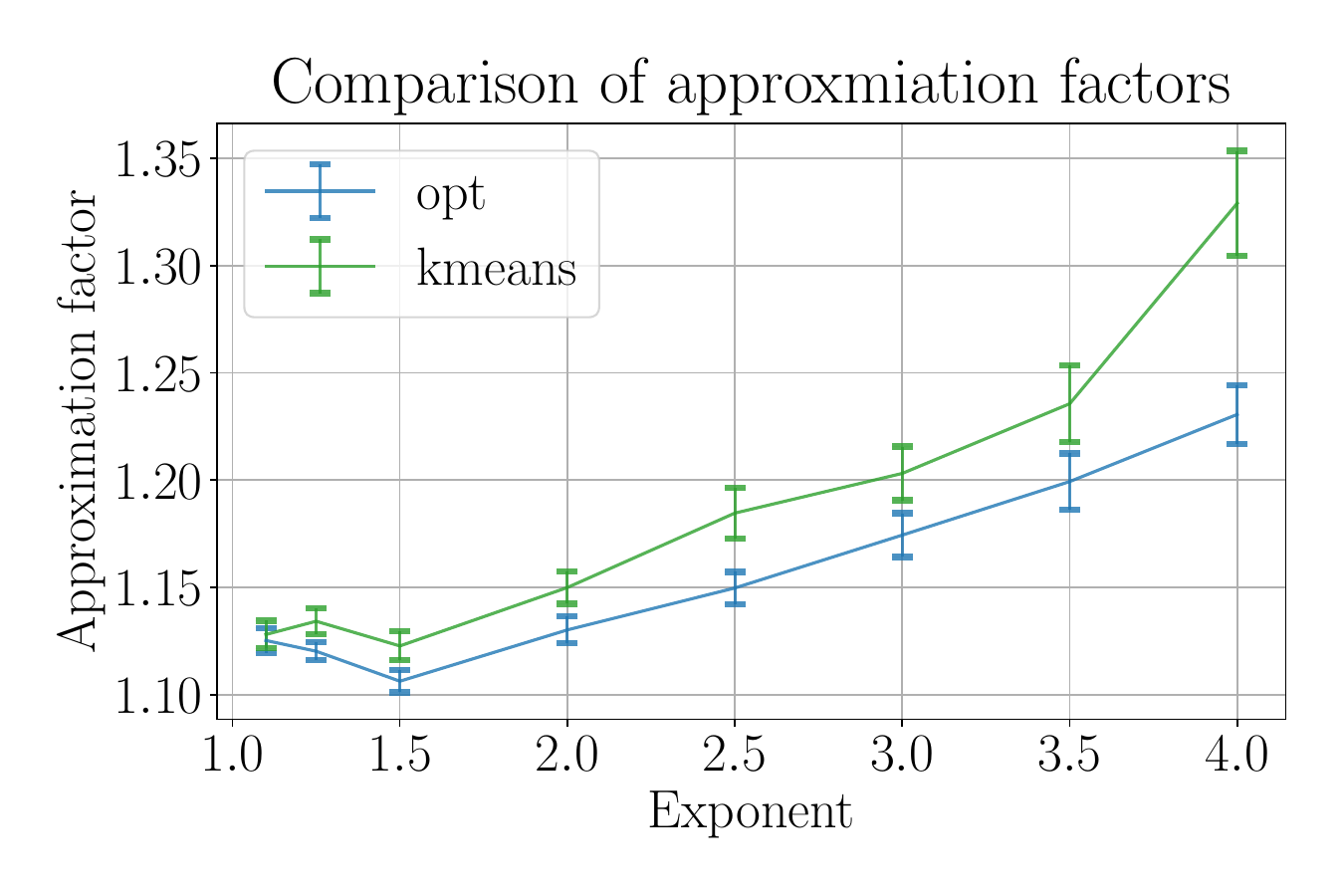}
	\end{subfigure}
	\hfill
	\begin{subfigure}[t]{0.48\textwidth}
		\centering
		\includegraphics[width=\linewidth]{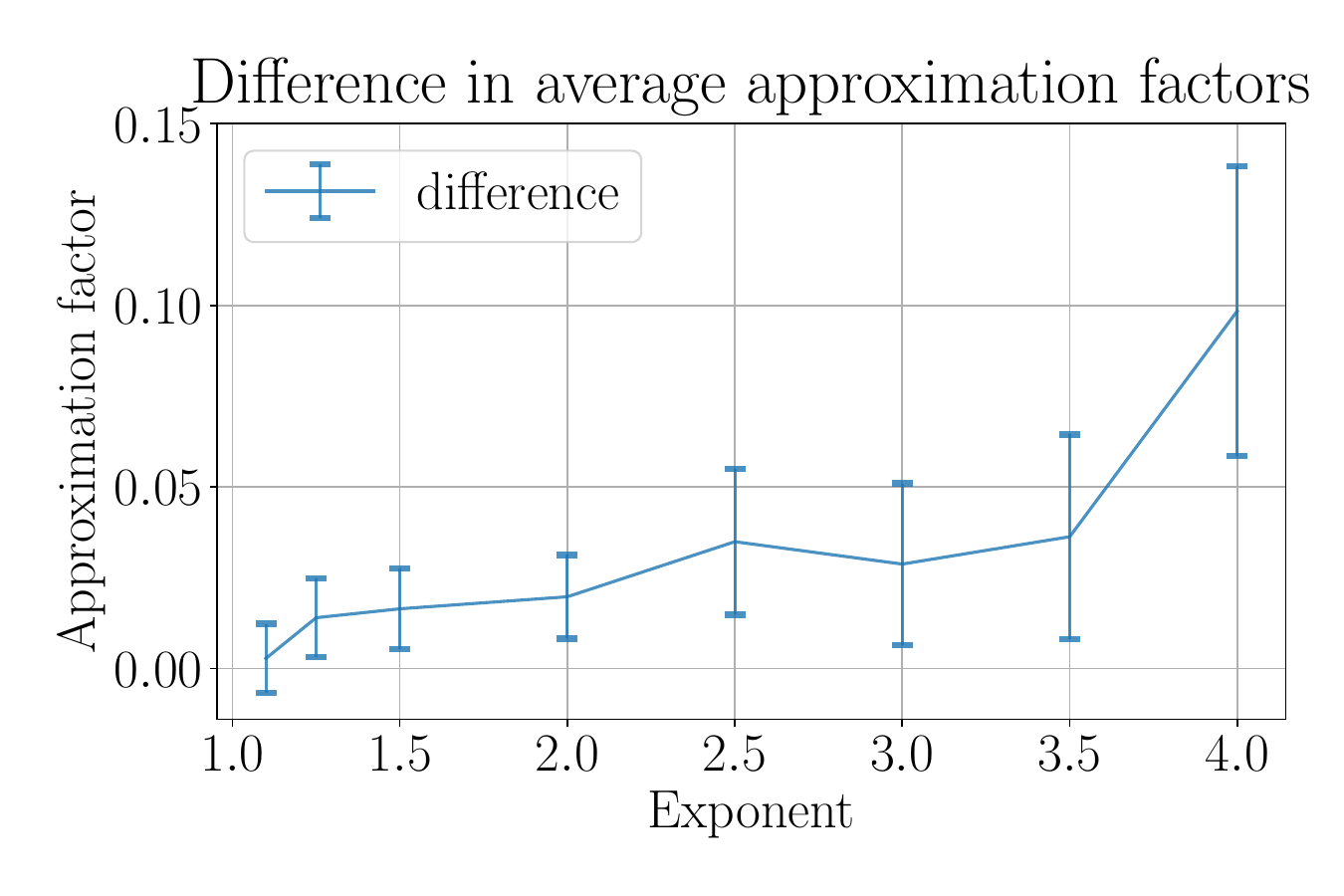}
	\end{subfigure}
	\caption{Approximation factors as a function of the exponent $\rho$ in \eqref{eq:power} for \textit{opt} and $k$-means (left), including standard errors for each exponent, and the corresponding average difference of the approximation factors (right), where positive values indicate that \textit{opt} achieves a lower approximation factor than $k$-means; two standard errors are shown for each exponent.}
	\label{fig:bothplots}
\end{figure}


In Figure~\ref{fig:bothplots} (right), we plot the average difference of approximation factors between \emph{opt} and $k$-means as functions of the power~$\rho$.
Positive values indicate lower approximation factor of \emph{opt} than $k$-means.
Furthermore, we provide two standard errors of the difference in average approximation factors for every $\rho$.
For every tested value $\rho\geq 1.25$, the lower bar is located above the zero line, indicating a significant difference between \emph{opt} and $k$-means.

For $\rho \leq 2$, the difference in average approximation factor lies between 0 and 0.025.
As $\rho$ increases, this difference also rises, indicating lower average approximation factors for \emph{opt} than $k$-means.
For larger values of~$\rho$, the advantage of \emph{opt} becomes more pronounced, as evidenced by the substantial increase at $\rho = 4$.

\subsection{Scenario Reduction for Portfolio Optimization}
\label{sec:port_opt} 
Portfolio optimization is one of the most prominent and practically relevant applications of distributionally robust optimization.
It serves as a widely studied benchmark for testing the performance of DRO methods, since it naturally involves uncertainty in expected returns and covariances.
In particular, the classical mean-variance formulation by Markowitz offers a structured yet realistic testbed to evaluate approximation quality and solution times under uncertainty.

We apply our scenario reduction approach to Markowitz's original formulation of portfolio optimization~\cite{markowitz}, which is given by
\begin{align*}
	\min_{\omega} \hspace{0.3cm} \omega^T \Sigma \omega \hspace{0.3cm}\text{s.t.} \hspace{0.3cm}\mu_0 \omega_0 + \mu^T \omega \geq R, \quad
	\sum_{i=0}^r \omega_i = 1, \quad \omega \geq 0,
\end{align*}
where \(\mu \in \mathbb{R}^r\) represents the expected returns of the given risky assets, and \(\mu_0\) is the risk-free return.
For our purposes, we use the average return of 1-year U.S. Treasuries over the selected time period.
Additionally, the covariance matrix of the risky assets is denoted by \(\Sigma \in \mathbb{R}^{r \times r}\).
The weights \(\omega_i\) represent the fraction of capital invested in asset \(i\), where \(i=0, 1, \ldots, r\).
The sum constraint \(\sum_{i=0}^r \omega_i = 1\) ensures full capital allocation.
The objective of this optimization problem is to minimize the portfolio variance \(\omega^T \Sigma \omega\) while guaranteeing an expected return of at least \(R \geq \mu_0\).

To construct distributionally robust test instances, we obtained stock market data using the \texttt{yfinance} module for \texttt{Python}.  
The selected period ranges from 2012 to 2023, with the final two years reserved for out-of-sample testing.  
To this end, and to mitigate survivorship bias, we retrieved the adjusted closing prices of all stocks that were part of the NASDAQ-100 at any point during this period.  
Scenarios were generated by randomly sampling a subset of stocks from this selection and computing the covariance matrix \(\Sigma\) of the stocks' time series for each year in the training dataset.  
Each resulting covariance matrix was treated as a possible scenario.  
For the construction of ambiguity sets using again Formula~\eqref{conf_interval}, we sampled from a uniform distribution over the scenarios.

Out of sample returns for the portfolio optimization problem are reported in Appendix.

\begin{figure}[H]
	\centering
	\includegraphics[width=0.9\textwidth]{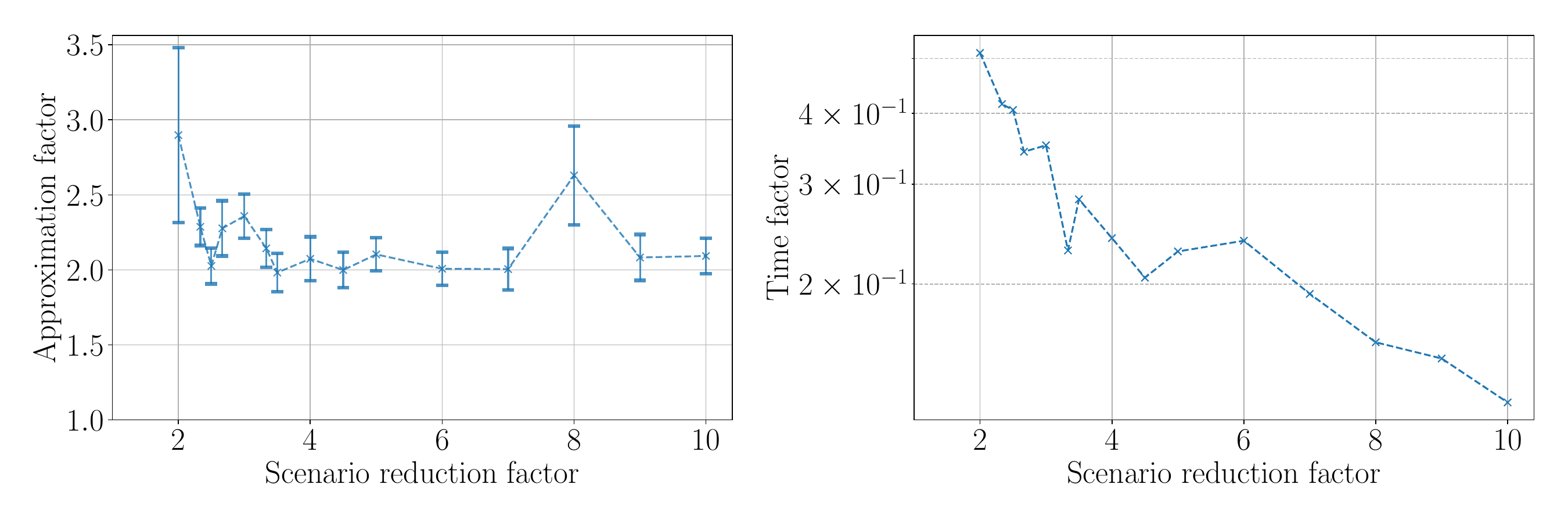}
	\caption{The mean time factor as a function of the scenario reduction factor (left) and the dimension of the covariance matrix (right) for \emph{opt}. 
		The time factor is calculated as the average across all instances for $N=0$ samples.\label{fig:time_factor}}
\end{figure}

Subsequently, the scenario matrices were clustered by solving \eqref{Eq:MISDP} using SCIP-SDP~4.3.0 and by applying the \emph{k-means} algorithm.  
Methodologically, the number of scenarios was set to 5, 6, 7, 8, 9, or 10 years, with the final year being 2021 and out-of-sample testing conducted in 2022 and 2023.  
The number of scenarios was then reduced to 1, 2, or 3, while the number of stocks, i.e., the dimensionality of \(\Sigma\), was randomly drawn from \(\{10, 11, \ldots, 20\}\) following a uniform distribution.  
The expected portfolio return was sampled uniformly from \([0.015, 0.25]\).  
This setup resulted in the creation of 800 portfolio optimization instances, with a maximum clustering time limit of 3600s.

All computations in this section were performed using a Python implementation on a machine equipped with an Intel® Core i7-8550U CPU @ 1.80GHz with 8 cores and 16 GB RAM.
We employed Gurobi~11.0.1~\cite{gurobi} as the solver for quadratic optimization problems and used the SCIP-SDP~4.3.0 plugin to solve mixed-integer semidefinite programs~\eqref{Eq:MISDP}.
As a benchmark, we compare our clustering approach against the well-known \emph{k-means} algorithm~\cite{kmeans_macqueen}, using the pyclustering~0.10.1.2 implementation~\cite{Novikov2019} and extending it to matrix clustering via the Frobenius norm.
We utilized the pyclustering package due to greater flexibility in editing the source code.

Analogous to the experiments in Section~\ref{Section:MIPLIB}, the results are analyzed with respect to the time savings achieved through scenario reduction (time factor (TF)) and the approximation error (approximation factor) of the in-sample DRO objective.
In the right subplot of Figure~\ref{fig:time_factor}, the time factor is shown as a function of the scenario reduction factor on a logarithmic scale for \emph{opt}.
	TF decreases linearly on the logarithmic axis thus the relative time required to solve~\eqref{Eq:DRO} decreases exponentially with increasing SRF.
	Furthermore, the time factor remains independent of both the number of samples drawn by the algorithm and the specific scenario reduction method employed.\\
	In the left subplot of Figure~\ref{fig:time_factor}, we plot the average approximation factor and the standard error as a function of the scenario reduction factor for \emph{opt}.
	The approximation factor remains relatively constant attaining values mostly between 2.0 and 2.5.
	This might be caused by the worst-case protection of the matrix clustering SDP~\eqref{Eq:MISDP}.
The median runtimes for scenario reduction were 8.4 s for \emph{opt} and 0.7 ms for \emph{k-means}, while the average runtimes were 62 s and 0.8 ms, respectively.  
However, the runtime required for \emph{opt} exhibits high variance, with the longest observed runtime reaching 2852 s, compared to only 2.6 ms for \emph{k-means}.
As expected, obtaining a cluster using \emph{k-means} is significantly faster than solving the MISDP~\eqref{Eq:MISDP}.  

In conclusion, our scenario reduction approaches perform well on the tested instances.  
They effectively reduce the complexity of quadratic DRO problems while maintaining a small approximation error.  
In practice, scenario reduction using \emph{k-means}  is a well-suited approach to achieving small approximation factors with limited computational effort.   
Moreover, as the number of samples increases, \emph{k-means} approaches an approximation factor of 1.  
Thus, \emph{k-means} serves as a high-quality heuristic for scenario reduction, particularly when minimizing the runtime of the global optimization problem is a priority.


\section{Conclusion} 
\label{sec:conclusion}
We introduced a scenario reduction method for distributionally robust optimization based on clustering the scenario set.  
This reduction provides a performance guarantee for the solution obtained when solving the problem with a reduced set of scenarios.  
For optimization problems with linear and quadratic objectives, we show that a minimized approximation guarantee and the corresponding reduced scenarios can be obtained by solving a linear and a semidefinite mixed-integer program, respectively.  
In the case of linear objectives, we derive theoretical bounds on the approximation guarantees and specify optimal representative scenarios.  
While our focus is on box and ellipsoidal ambiguity sets, our theoretical results are applicable to arbitrary sets.  
To evaluate the performance of the reduced scenarios, we applied our approach to instances from the MIPLIB library as well as to a convex quadratic portfolio optimization problem.
Our numerical results demonstrate a significant reduction in the runtime required to solve the DRO problem.  
Furthermore, we show that the \emph{k-means} algorithm serves as a high-quality heuristic for quickly solving the DRO problem while incurring only a small penalty in objective performance.
However, when the objective depends non-linearly on the scenario, our newly introduced approach yields significantly tighter approximation factors.
Currently, our approximation guarantees hold for arbitrary ambiguity sets, offering great flexibility but at the cost of accuray.
Additionally, we demonstrate the tightness of the bound introduced in our theoretical framework.

Future research will aim to incorporate problem-specific ambiguity set properties to improve the theoretical approximation guarantees.  
Additionally, the properties of the objective function and the feasible set $\mX$ could be more incorporated when reducing scenarios.  
Moreover, dimensionality reduction techniques could be applied to further decrease the runtime of the DRO problem.  


\section*{Acknowledgments}
We thank the German Research Foundation (DFG) for their support within Projects A05, B06, and B10 in the Sonderforschungsbereich/Transregio 154 Mathematical Modelling, Simulation and Optimization using the Example of Gas Networks with Project-ID 239904186.
This paper has been supported by the Deutsche Forschungsgemeinschaft (DFG, German Research Foundation) - Project-ID 416229255 - CRC 1411. 
Research reported in this paper was partially supported through the Research Campus Modal funded by the German Federal Ministry of Education and Research (fund numbers 05M14ZAM,05M20ZBM) and the Deutsche Forschungsgemeinschaft (DFG) through the DFG Cluster of Excellence MATH+.

\bibliographystyle{apalike}
\bibliography{references} 

\newpage
\section*{Appendix}
\renewcommand\thetheorem{\Alph{section}.\arabic{theorem}}
Here we present the proofs omitted from the main body of the paper as well as additional numerical experiments.

\subsection*{Theoretical Results}

The proof for Corollary~\ref{coro:diagonale}:

\begin{proof}
	%
	%
	We consider a point \(s'\) which lies on the line joining \(\underline{s}\) and \(\overline{s}\). 
	Then there exists a \(\lambda \geq 0\) such that 
	$s' = \lambda \underline{s} + (1 - \lambda)\overline{s}$.
	Given \(s'\), optimal approximation ratios
	\(\alpha\) and \(\beta\) for  \(s'\) are given by 
	\[
	\alpha = \max_{i=1,\dots,m}\left(\frac{\overline{s}_i}{s_i'}\right),\;\;\beta = \max_{i=1,\dots,m} \left(\frac{s_i'}{\underline{s}_i}\right).
	\]
	We will now prove \(\alpha \beta = \max_{i=1,\dots,m}{\overline{s}_i}/{\underline{s}_i}\).
	Let there be some \(j \in \{1, \ldots, m\}\) such that \(\alpha = {\overline{s}_{j}}/{s_{j}'}\). This means that \({\overline{s}_{j}}/{s_{j}'} \geq {\overline{s}_{i}}/{s_{i}'}\) for all \(i\neq j\). 
	Now we know that 
	\[
	s_i' = \lambda \underline{s}_i + (1 - \lambda) \overline{s}_i \quad \forall i=1,\ldots,m. 	
	\]
	Dividing on both sides by \(s_i'\) we obtain 
	\[
	1 = \lambda \frac{\underline{s}_i}{s_i'} + (1 - \lambda) \frac{\overline{s}_i}{s_i'}\quad \forall i=1,\ldots,m.
	\]
	Since all ratios (for all \(i\)) are equal to \(1\), then we can write
	\[
	\lambda \frac{\underline{s}_j}{s_j'} + (1 - \lambda) \frac{\overline{s}_j}{s_j'} = \lambda \frac{\underline{s}_i}{s_i'} + (1 - \lambda) \frac{\overline{s}_i}{s_i'}\quad \forall i \neq j.
	\]
	Rearranging the terms yields
	\[
	\lambda \left(\frac{\underline{s}_j}{s_j'} - \frac{\underline{s}_i}{s_i'}\right) =  (1 - \lambda)\left(\frac{\overline{s}_i}{s_i'} - \frac{\overline{s}_j}{s_j'}\right)\quad \forall i \neq j.
	\]
	Since we know that the index \(j\) maximizes the ratio
	\(\frac{\overline{s}_{j}}{s_{j}'}\), this means that \(\frac{\overline{s}_{j}}{s_{j}'} \geq \frac{\overline{s}_{i}}{s_{i}'} \;\;\forall i=1,\ldots,m\). 
	As such, the right-hand side of the above equation is less or equal to \(0\). 
	Thus, we have for the left-hand side that 
	\[
	\lambda \left(\frac{\underline{s}_j}{s_j'} - \frac{\underline{s}_i}{s_i'}\right) \leq 0 \quad \forall i \neq j.
	\]
	Flipping the numerator and the denominators, we obtain
	\[
	\frac{\underline{s}_j}{s_j'} \leq \frac{\underline{s}_i}{s_i'} \quad \forall i \neq j \;\implies\; \frac{s_j'}{\underline{s}_j} \geq \frac{s_i'}{\underline{s}_i} \quad \forall i \neq j.
	\]
	This means that 
	\[
	\beta = \max_{i=1,\dots,m} \left(\frac{s_i'}{\underline{s}_i}\right) = \frac{s_j'}{\underline{s}_j}.
	\]
	So we have that 
	\[\alpha \beta = \frac{\overline{s}_j}{s_j'} \cdot \frac{s_j'}{\underline{s}_j} = \frac{\overline{s}_j}{\underline{s}_j} = \max_{i=1,\dots,m} \left(\frac{\overline{s}_i}{\underline{s}_i}\right).\]
	This concludes the proof.
\end{proof}

\begin{lemma}
	\label{lemma:scaling_intervals}
	Scaling an axis by a constant $c>0$ doesn't change the axis' approximation bounds $\alpha$ and $\beta$.
\end{lemma}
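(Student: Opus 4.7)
The plan is to reduce the statement to the fact that every ratio appearing in the definitions of $\alpha$ and $\beta$ (see Lemma~\ref{assump:scenario_bounds}) is homogeneous of degree zero along the axis being scaled. Concretely, I would fix an axis $i \in \{1,\ldots,m\}$ and define the coordinate-scaling map $T_c\colon \R^m \to \R^m$ by $T_c(s_1,\ldots,s_m) = (s_1,\ldots,c s_i,\ldots,s_m)$. Since $c>0$, $T_c$ preserves strict positivity, so Assumption~\ref{assump:pos_scenario_sets} is maintained after scaling.

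Next, I would argue an equivariance between feasible solutions of the clustering problem~\eqref{Eq:Clustering_MINLP} over $\mS$ and over $T_c\mS$. Given any partition $\{\mS_1,\dots,\mS_K\}$ of $\mS$ with representatives $\tilde{s}_1,\dots,\tilde{s}_K$ that certifies scaling factors $(\alpha,\beta)$, the collection $\{T_c\mS_1,\dots,T_c\mS_K\}$ with representatives $T_c\tilde{s}_1,\dots,T_c\tilde{s}_K$ partitions $T_c\mS$. Inspecting the constraints componentwise, for every axis $k \neq i$ the inequalities $s_{jk} \leq \alpha \tilde{s}_{jk}$ and $\tilde{s}_{jk} \leq \beta s_{jk}$ are untouched, while for axis $i$ both sides are multiplied by $c$, so
\[
c s_{ji} \leq \alpha (c\tilde{s}_{ji}) \quad\text{and}\quad c\tilde{s}_{ji} \leq \beta (c s_{ji})
\]
hold with the same $\alpha,\beta$. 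The same reasoning with the inverse map $T_{1/c}$ shows the reverse implication.

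Consequently, feasible values of $(\alpha,\beta)$ for the scaled and unscaled problems coincide, and in particular the per-axis maxima $\max_i \overline{s}_{ji}/\tilde{s}_{ji}$ and $\max_i \tilde{s}_{ji}/\underline{s}_{ji}$ produced along the scaled axis $i$ remain $\overline{s}_{ji}/\tilde{s}_{ji}$ and $\tilde{s}_{ji}/\underline{s}_{ji}$ respectively, because the factor $c$ cancels. Taking the optimum of the clustering problem on either side proves invariance of $\alpha$ and $\beta$.

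The only obstacle is a notational one: one must agree on what "the axis' approximation bounds" means, i.e. whether $\alpha,\beta$ denote the global factors or their per-axis contributions entering the maxima in Lemma~\ref{assump:scenario_bounds}. In either interpretation the conclusion follows from the same one-line observation that $(c a)/(c b) = a/b$, so there is no deep difficulty beyond careful bookkeeping of the scaling map and the fact that the optimal representative along axis $i$ scales consistently with the scenarios.
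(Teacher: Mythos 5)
Your proof is correct and takes essentially the same route as the paper: scale the representative scenario together with the scenario set and observe that every ratio defining $\alpha$ and $\beta$ is invariant because the factor $c$ cancels. Your explicit use of the inverse map $T_{1/c}$ to transfer optimality back and forth is just a slightly more careful rendering of the paper's closing remark that the scaled solution remains optimal because the constraints stay tight.
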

\begin{proof}
	Consider a scenario set \(\mS \subseteq \R^n\). 
	Let 
	\[
	\underline{s}_i = \min_{s \in \mS} e_i^{\top}s \text{ and }	\overline{s}_i = \max_{s \in \mS} e_i^{\top}s.
	\]
	We also assume that $0<\underline{s}< \overline{s}$. 
	Let $\tilde{s} \in \mS$ be the optimal representative scenario for this set. 
	We know that \(\tilde{s}\) solves the following optimization problem 
	\begin{equation*}
	\begin{aligned}
		\min_{\alpha, \beta, \tilde{s}} &\; \alpha \beta \\
		\text{s.t.} &\; s \leq \alpha \tilde{s} \;\;\forall s \in \mathcal{S},\\
		&\; \tilde{s} \leq \beta s \;\;\forall s \in \mathcal{S}.
	\end{aligned}
	\end{equation*}
	Given \(\tilde{s}\) we can then write the optimal approximation ratios \(\alpha\) and \(\beta\) as 
	\begin{equation*}
	\alpha = \frac{\overline{s}}{\tilde{s}} \text{ and } \beta=\frac{\tilde{s}}{\underline{s}}.
	\end{equation*}
	Let \(c \in \R^n\) be a constant such that \(c > 0\). 
	Now, we scale the box by dividing component-wise by $c$. Then we can the scaled set \(\mS'\)
	\[
	\mS^\prime = \left\{\left(\frac{s_1}{c_1},\frac{s_2}{c_2},\dots,\frac{s_n}{c_n}\right) \mid s \in \mS\right\}.
	\]
	We first define a new cluster representative $\tilde{s}^\prime = \left(\frac{\tilde{s}_1}{c_1},\frac{\tilde{s}_2}{c_2},\dots,\frac{\tilde{s}_n}{c_n}\right)$ for the scaled set \(\mS'\).
	We can then obtain the new approximation bounds $\alpha^\prime$ and $\beta^\prime$ for \(\tilde{s}^\prime\).
	\begin{equation*}
	\alpha^\prime = \frac{\overline{s}/c}{\tilde{s}/c} = \frac{\overline{s}}{\tilde{s}}=\alpha \text{ and } \beta^\prime = \frac{\tilde{s}/c}{\underline{s}/c} = \frac{\tilde{s}}{\underline{s}} = \beta.
	\end{equation*}
	These are the same as the approximation ratios without scaling.
	
	We also observe that \(\alpha, \beta, \tilde{s}'\) are also the optimal solution to the problem 
	\begin{equation*}
		\begin{aligned}
			\min_{\alpha, \beta, \tilde{s}} &\; \alpha \beta \\
			\text{s.t.} &\; s \leq \alpha \tilde{s} \;\;\forall s \in \mathcal{S}',\\
			&\; \tilde{s} \leq \beta s \;\;\forall s \in \mathcal{S}',
		\end{aligned}
	\end{equation*}
	since the constraints are tight and the values of \(\alpha\) and \(\beta\) cannot be improved.
\end{proof}


	

\begin{remark}
	While the scaling in Lemma~\ref{lemma:scaling_intervals} was limited to one dimension, we can naturally extend it to multiple dimensions by using a different scaling constant \(c_i\) for dimension \(i\). 
	This allows us to rescale any set \(\mS\) such that its lower bound \(\underline{s}\) is always equal to \((1,1,\dots,1)\).
\end{remark}

\begin{theorem}
	\label{theorem:alpha_beta}
	Let $\alpha\beta$ be approximation bounds and define $\gamma_{\min}$ to be the minimal product of $\alpha\beta$. Then, the following statements hold for any cluster.
	\begin{enumerate}
	\item Any point $\tilde{s} \in \mS_j \subset \mathbb{R}^2$ is optimal w.r.t. $\alpha\beta$ if and only if $\tilde{s}$ is located between the two lines $y=x$ and $y=\frac{\overline{s}_y}{\overline{s}_x} \cdot x$ where $\overline{s} =(\overline{s}_x, \overline{s}_y)^T$ is the upper right corner and $(1 \;\; 1)^T$ is the lower left corner of the cluster.
	\item The following holds for two-dimensional problem instances:
	Suppose a cluster whose component-wise ratios of upper and lower bounds are equal. Then, only the points on the diagonal of the cluster are optimal w.r.t. $\alpha\beta$.
	\end{enumerate}
	\end{theorem}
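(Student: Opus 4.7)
My plan is to normalize the cluster, then perform a short case analysis on which coordinate attains each max in the definitions of $\alpha$ and $\beta$.

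First I would invoke Lemma~\ref{lemma:scaling_intervals} to rescale the cluster componentwise so that its lower-left corner becomes $\underline{s}=(1,1)^\top$; both $\gamma_{\min}$ and the set of optimal representatives are invariant under this rescaling. Then Lemma~\ref{lem:single_scenario} gives $\gamma_{\min}=\max(\overline{s}_x,\overline{s}_y)$, and by swapping coordinates if necessary I would assume WLOG $\overline{s}_y\leq\overline{s}_x$, so $\gamma_{\min}=\overline{s}_x$. In this normalization, for any candidate $\tilde{s}\in[1,\overline{s}_x]\times[1,\overline{s}_y]$, the bounds reduce to $\alpha(\tilde{s})=\max(\overline{s}_x/\tilde{s}_x,\overline{s}_y/\tilde{s}_y)$ and $\beta(\tilde{s})=\max(\tilde{s}_x,\tilde{s}_y)$.

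The switch between arguments of the $\beta$-maximum occurs on $y=x$, and the switch for the $\alpha$-maximum on $y=(\overline{s}_y/\overline{s}_x)x$; since $\overline{s}_y/\overline{s}_x\leq 1$, the second line lies on or below the first, so the two lines cut the cluster into three nonempty regions. I would then compute $\alpha\beta$ in each: in the wedge between the two lines, $\alpha=\overline{s}_x/\tilde{s}_x$ and $\beta=\tilde{s}_x$, giving $\alpha\beta=\overline{s}_x=\gamma_{\min}$; below the lower line, the defining inequality implies $\alpha\beta=\overline{s}_y\tilde{s}_x/\tilde{s}_y>\overline{s}_x$; and above the diagonal, $\alpha\beta=\overline{s}_x\tilde{s}_y/\tilde{s}_x>\overline{s}_x$. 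This establishes both directions of the \emph{iff} in part~1. Part~2 then follows immediately: under the hypothesis $\overline{s}_x/\underline{s}_x=\overline{s}_y/\underline{s}_y$, the rescaling yields $\overline{s}_x=\overline{s}_y$, the two bounding lines coincide with $y=x$, and the optimal set collapses to the diagonal.

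The main obstacle is purely bookkeeping: I would need to verify that the fourth combinatorial region $\{\tilde{s}_y>\tilde{s}_x,\ \tilde{s}_y<(\overline{s}_y/\overline{s}_x)\tilde{s}_x\}$ is empty under the chosen ordering, and to track boundary behavior carefully so that points on either of the two lines are included in the optimal set (so that the "between" in the theorem statement is interpreted as a closed condition). Beyond that, the argument reduces to comparing three explicit products against $\gamma_{\min}=\overline{s}_x$.
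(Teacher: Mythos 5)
Your proposal is correct and follows essentially the same route as the paper's proof: normalize the cluster so the lower-left corner is $(1,1)$ via the scaling lemma, identify $\gamma_{\min}=\max_i \overline{s}_i$, and compare $\alpha\beta$ explicitly in the wedge between the two lines versus the two outside regions, with part~2 following since equal ratios make the lines coincide. The only differences are cosmetic: you fix the opposite WLOG ordering of $\overline{s}_x,\overline{s}_y$ and cite Lemma~\ref{lem:single_scenario} explicitly for the value of $\gamma_{\min}$, which the paper leaves implicit.
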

	\vspace{.25cm}
\begin{proof}
	\emph{Part (1)}: Since we can scale any dimension by any factor $c>0$, we now assume the cluster box to be normalized, i.e., the lower left corner has a value of 1 in every component.\\
	W.l.o.g let $\overline{s}_y > \overline{s}_x$. Furthermore, let $\tilde{s} = (c, d)^T$ satisfy the above-mentioned conditions implying $d \geq c$. We obtain the following identity:
	\begin{equation*}
	\frac{d}{c} \leq \frac{\overline{s}_y}{\overline{s}_x} \iff \frac{\overline{s}_x}{c} \leq \frac{\overline{s}_y}{d}.
	\end{equation*}
	Calculating the approximation factors as well as their product yields
	\begin{align*}
	\alpha &= \max\left\{\frac{\overline{s}_x}{c}, \frac{\overline{s}_y}{d} \right\} = \frac{\overline{s}_y}{d}\\
	\beta &= \max\left\{c, d \right\} = d\\
	\alpha\beta &= \frac{\overline{s}_y}{d} \cdot d = \overline{s}_y
	\end{align*}
	$s_{u, y} = \max_{i}{\overline{s}^{(i)}} = \gamma_{\min}$ implies the proposition.\\
	We proceed by proving non-optimality of $\tilde{s}=(c \;\; d)^T$ w.r.t. $\alpha\beta$ for any $\tilde{s}$ outside of the aforementioned convex set.\\
	Again, w.l.o.g. let $\frac{d}{c} > \frac{\overline{s}_y}{\overline{s}_x} > 1$ and equivalently $\frac{\overline{s}_x}{c} > \frac{\overline{s}_y}{d}$. Calculating $\alpha$ and $\beta$ we obtain:
	\begin{align*}
	\alpha &= \max \left\{\frac{\overline{s}_x}{c}, \frac{\overline{s}_y}{d} \right\} = \frac{\overline{s}_x}{c}\\
	\beta &= \max \left\{ c, d \right\} = d\\
	\alpha\beta &= \frac{\overline{s}_x}{c} \cdot d > \frac{\overline{s}_y}{d} \cdot d = \overline{s}_y = \gamma_{\min}
	\end{align*}
	Finally, we keep the setting above, but select $\tilde{s}$ to be below $y=x$ implying $\frac{d}{c} < 1 < \frac{\overline{s}_y}{\overline{s}_x}$ and thus $\frac{\overline{s}_x}{c} < \frac{\overline{s}_y}{d}$. Calculating $\alpha$ and $\beta$ we obtain:
	\begin{align*}
	\alpha &= \max \left\{ \frac{\overline{s}_x}{c}, \frac{\overline{s}_y}{d} \right\} = \frac{\overline{s}_y}{d}\\
	\beta &= \max \left\{ c, d \right\} = c\\
	\alpha\beta &= \frac{\overline{s}_y}{d} \cdot c = \gamma_{\min} \cdot \frac{c}{d} > \gamma_{\min}
	\end{align*}
	\emph{Part (2)}: Since we can scale any axis by its lower bound, we can normalize the lower left corner to $(1 \;\; 1)^T$. Hence, the transformed cluster's upper right corner is given by $(d \;\; d)^T$ with $d>1$ and the two lines $y=x$ and $y=\frac{\overline{s}_y}{\overline{s}_x}x = \frac{d}{d} x = x$ from the previous statement (4) are the same. Therefore, the only optimal points w.r.t. $\alpha\beta$ lie on the diagonal of the cluster.
\end{proof}

\begin{lemma}\label{App:representatives}
	\begin{enumerate}
		\item A cluster representative $\tilde{s} \in \mathbb{R}^n_+$ is optimal w.r.t. $\gamma \coloneqq \alpha \beta$ if and only if $\argmax_i \tilde{s}^i = \argmax_i \frac{\overline{s}^i}{\tilde{s}^i}$.
		\item The set $C=\{\tilde{s} \in S \; \vert \; \gamma(\tilde{s}) = \gamma_{\min} \}$ of optimal cluster representatives is convex.
		\end{enumerate}
\end{lemma}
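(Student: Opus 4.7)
The plan is to base both statements on the elementary componentwise chain $\alpha(\tilde{s})\beta(\tilde{s}) \geq (\overline{s}^i/\tilde{s}^i)(\tilde{s}^i/\underline{s}^i) = \overline{s}^i/\underline{s}^i$, valid for every index $i$ because $\alpha \geq \overline{s}^i/\tilde{s}^i$ and $\beta \geq \tilde{s}^i/\underline{s}^i$ by definition of the max. Maximising over $i$ recovers the value $\gamma_{\min} = \max_i \overline{s}^i/\underline{s}^i$ from Lemma~\ref{lem:single_scenario}, and the equality case pins down exactly which indices drive the outer maxima in $\alpha$ and in $\beta$. That structural information is the content of (1) and the engine of (2).

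For statement (1), I would fix any $k^* \in \argmax_i \overline{s}^i/\underline{s}^i$. If $\tilde{s}$ is optimal, both slack inequalities $\alpha \geq \overline{s}^{k^*}/\tilde{s}^{k^*}$ and $\beta \geq \tilde{s}^{k^*}/\underline{s}^{k^*}$ must be tight, since otherwise their product would strictly exceed $\overline{s}^{k^*}/\underline{s}^{k^*} = \gamma_{\min}$, contradicting $\gamma(\tilde{s}) = \gamma_{\min}$. Consequently $k^*$ simultaneously lies in $\argmax_i \overline{s}^i/\tilde{s}^i$ and in $\argmax_i \tilde{s}^i/\underline{s}^i$; after the normalisation $\underline{s}^i \equiv 1$ furnished by Lemma~\ref{lemma:scaling_intervals} the latter set coincides with $\argmax_i \tilde{s}^i$. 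For the converse, if some $k$ attains both argmaxes, the product telescopes to $\overline{s}^k/\underline{s}^k \leq \gamma_{\min}$, which combined with the universal lower bound yields equality.

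For statement (2), the strategy is to verify convexity directly from the characterisation in (1). Given $p, q \in C$ and $\lambda \in [0,1]$, set $\tilde{s} = \lambda p + (1-\lambda) q$ and fix $k^* \in \argmax_i \overline{s}^i/\underline{s}^i$. Part (1) gives $k^* \in \argmax_i \overline{s}^i/p^i$ and $k^* \in \argmax_i \overline{s}^i/q^i$, which rewrite componentwise as $\overline{s}^{k^*} p^i \geq \overline{s}^i p^{k^*}$ and $\overline{s}^{k^*} q^i \geq \overline{s}^i q^{k^*}$ for every $i$. Taking the convex combination of these two inequalities (with weights $\lambda, 1-\lambda \geq 0$) preserves the direction and yields $\overline{s}^{k^*}\tilde{s}^i \geq \overline{s}^i \tilde{s}^{k^*}$, i.e.\ $k^* \in \argmax_i \overline{s}^i/\tilde{s}^i$. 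The analogous manipulation with $\underline{s}$ replacing $\overline{s}$ in the denominator delivers $k^* \in \argmax_i \tilde{s}^i/\underline{s}^i$, and the backward implication of (1) then certifies $\tilde{s} \in C$.

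The main obstacle I anticipate is notational rather than technical: the set-equality $\argmax_i \tilde{s}^i = \argmax_i \overline{s}^i/\tilde{s}^i$ written in (1) must be read as the existence of a common maximising index, since ties can break literal set equality (e.g.\ $\tilde{s} = \underline{s} = (1,\dots,1)$ is optimal yet puts every coordinate in $\argmax_i \tilde{s}^i$ while only $\argmax_i \overline{s}^i$ sits on the other side). Once this reading is fixed, both parts reduce to the componentwise lower bound above together with the single mediant-style manipulation used in (2), and no further machinery is required.
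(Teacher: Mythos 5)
Your proof is correct and follows essentially the same route as the paper's: part (1) via tightness of the two componentwise bounds $\alpha \geq \overline{s}^{k^*}/\tilde{s}^{k^*}$ and $\beta \geq \tilde{s}^{k^*}/\underline{s}^{k^*}$ at an index $k^*$ maximizing $\overline{s}^{i}/\underline{s}^{i}$, and part (2) by showing this common maximizing index survives convex combinations and then invoking (1). Your closing remark that the stated argmax equality must be read as the existence of a common maximizing index is apt (ties do break literal set equality, e.g.\ $\tilde{s}=\underline{s}$); the paper's proof implicitly adopts the same reading via its normalization and coordinate sorting, and your fixed-$k^*$ formulation handles the tie cases a bit more cleanly.
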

\begin{proof}
		\emph{Part (1)}: W.l.o.g., we normalize the lower left corner of the hyper rectangle to $(1, \ldots, 1)$. Furthermore, we can assume $\tilde{s}^{(1)} \geq \tilde{s}^{(2)} \geq \ldots \geq \tilde{s}^{(n)}$ by permutation of coordinates.\\
		Let's first assume $1 = \argmax_i \tilde{s}^{(i)} = \argmax_i \frac{\overline{s}^{(i)}}{\tilde{s}^{(i)}}$. Thus we obtain $\alpha=\frac{\overline{s}^{(1)}}{\tilde{s}^{(1)}}$ and $\beta = \tilde{s}^{(1)}$. Therefore $\alpha\beta = \overline{s}^{(1)}$ which is optimal as shown in 3).\\
		Now we let $j = \argmax_i \frac{\overline{s}^{(i)}}{\tilde{s}^{(i)}} \neq 1$ which implies $\frac{s_{u}^{(j)}}{\tilde{s}^{(j)}} > \frac{\overline{s}^{(1)}}{\tilde{s}^{(1)}}$. We still have $\beta = \tilde{s}^{(1)}$, but $\alpha = \frac{\overline{s}^{(j)}}{\tilde{s}^{(j)}}$. Hence
		\begin{align*}
		\alpha\beta = \frac{\overline{s}^{(j)}}{\tilde{s}^{(j)}} \tilde{s}^{(1)} > \frac{\overline{s}^{(1)}}{\tilde{s}^{(1)}} \tilde{s}^{(1)} = \overline{s}^{(1)} = \gamma_{\min}.
		\end{align*}
		\emph{Part (2)}: Let $\tilde{s}_1, \tilde{s}_2 \in C$. W.l.o.g. 6) implies 
		\begin{align*}
		\alpha(\tilde{s}_1) = \frac{\overline{s}^{(1)}}{\tilde{s}^{(1)}}, \;\;\;\; \beta(\tilde{s}_1) = \tilde{s}_1^{(1)}, \\
		\alpha(\tilde{s}_2) = \frac{\overline{s}^{(1)}}{\tilde{s}^{(1)}}, \;\;\;\; \beta(\tilde{s}_2) = \tilde{s}_2^{(1)}.
		\end{align*}
		For $\lambda \in [0, 1]$ let $t \coloneqq \lambda \tilde{s}_1 + (1-\lambda) \tilde{s}_2$ yielding
		\begin{align*}
		\alpha(t) &= \max_i \left\{ \frac{\overline{s}^{(i)}}{t^{(i)}} \right\} = \max_i \left\{ \frac{\overline{s}^{(i)}}{(\lambda \tilde{s}_1 + (1-\lambda) \tilde{s}_2)^{(i)}} \right\}\\
		\beta(t) &= \max_i \left\{ t^{(i)} \right\} = \lambda \tilde{s}_1^{(1)} + (1-\lambda) \tilde{s}_2^{(1)}
		\end{align*}
		We prove $\alpha(t) = \frac{\overline{s}^{(1)}}{(\lambda \tilde{s}_1 + (1-\lambda) \tilde{s}_2)^{(1)}}$ by contradiction. Assume there exists some $k \neq 1$ with
		\begin{align*}
		\frac{\overline{s}^{(k)}}{(\lambda \tilde{s}_1 + (1-\lambda) \tilde{s}_2)^{(k)}} &> \frac{\overline{s}^{(1)}}{(\lambda \tilde{s}_1 + (1-\lambda) \tilde{s}_2)^{(1)}}\\
		\iff \lambda \frac{\tilde{s}_1^{(k)}}{\overline{s}^{(k)}} + (1-\lambda) \frac{\tilde{s}_2^{(k)}}{\overline{s}^{(k)}} &< \lambda \frac{\tilde{s}_1^{(1)}}{\overline{s}^{(1)}} + (1-\lambda) \frac{\tilde{s}_2^{(1)}}{\overline{s}^{(1)}}.
		\end{align*}
		However since $\tilde{s}_1$, $\tilde{s}_2$ are optimal w.r.t. $\gamma$, the following holds for all $k \neq 1$:
		\begin{align*}
		\frac{\overline{s}^{(k)}}{\tilde{s}_1^{(k)}} < \frac{\overline{s}^{(1)}}{\tilde{s}_1^{(1)}} \text{  and  }
		\frac{\overline{s}^{(k)}}{\tilde{s}_2^{(k)}} < \frac{\overline{s}^{(1)}}{\tilde{s}_2^{(1)}}\\
		\iff \frac{\tilde{s}_1^{(k)}}{\overline{s}^{(k)}} > \frac{\tilde{s}_1^{(1)}}{\overline{s}^{(1)}} \text{  and  }
		\frac{\tilde{s}_2^{(k)}}{\overline{s}^{(k)}} > \frac{\tilde{s}_2^{(1)}}{\overline{s}^{(1)}}.
		\end{align*}
		Combining the previous two considerations, we obtain a contradiction:
		\begin{align*}
		\lambda \frac{\tilde{s}_1^{(1)}}{\overline{s}^{(1)}} + (1-\lambda) \frac{\tilde{s}_2^{(1)}}{\overline{s}^{(1)}} < \lambda \frac{\tilde{s}_1^{(k)}}{\overline{s}^{(k)}} + (1-\lambda) \frac{\tilde{s}_2^{(k)}}{\overline{s}^{(k)}} < \lambda \frac{\tilde{s}_1^{(1)}}{\overline{s}^{(1)}} + (1-\lambda) \frac{\tilde{s}_2^{(1)}}{\overline{s}^{(1)}}.
		\end{align*}
		Therefore $\alpha(t) = \frac{\overline{s}^{(1)}}{(\lambda \tilde{s}_1 + (1-\lambda) \tilde{s}_2)^{(1)}}$ must hold. And calculating
		\begin{align*}
		\alpha(t) \cdot \beta(t) = \frac{\overline{s}^{(1)}}{(\lambda \tilde{s}_1 + (1-\lambda) \tilde{s}_2)^{(1)}} \cdot \left(\lambda \tilde{s}_1^{(1)} + (1-\lambda) \tilde{s}_2^{(1)} \right) = \overline{s}^{(1)} = \gamma_{\min}
		\end{align*}
		implies $t \in C$. Thus $C$ is convex.
		%
		%
\end{proof}

\lemredellips*

\begin{proof}
	Consider the original ellipsoid contained in \(\mP\). We can express it as 
	\[
	\Omega = \{ p \mid (p - p^0)^{\top}\Sigma^{-1}(p - p^0) \leq r^2\}
	\]
	We know that the set of probability distributions over the reduced scenario set is a linear transformation of the original probability distribution with a full row rank matrix \(A\). 
	
	Using Lemma~\ref{lemma:ellipsoid_transformation}, we observe that the linear transformation of the ellipsoid \(\Omega\) with a full row rank matrix \(A\) is a new ellipsoid as follows
	\[
	\Omega = \{ \tilde{p} \mid (\tilde{p} - A p^0)^{\top}\tilde{\Sigma}^{-1}(\tilde{p} - A p^0) \leq r^2\}
	\]
	where \(\tilde{\Sigma} = A \Sigma A^{\top}\). 
	
	Since the matrix \(A\) aggregates the probability over the scenarios, we have that if the original set of probabilities satisfies \(\sum_{i=1}^{|\mS|} p_i=1\), then the aggregate set of probabilities \(\tilde{p} = A p\) will satisfy \(\sum_{i=1}^{|\tilde{\mS}|} \tilde{p}_i=1\). 

	This concludes the proof.
\end{proof}

\begin{lemma}
	\label{lemma:ellipsoid_transformation}
	\begin{enumerate}[label=(\roman*)]
	\item The transformation of a sphere centered at the origin by a full row rank matrix \(A\) is an ellipsoid centered at the origin with covariance matrix \(A A^{\top}\). 
	
	\item The map of an ellipsoid $P \in \mathbb{R}^n$ centered at \(x_0\) and with convariance matrix \(\Sigma\) under a full row-rank linear transformation $A \in \mathbb{R}^{m \times n}$ with $n \geq m$ is an ellipsoid in $R^m$ centered at \(A x_0\) and with covariance matrix \(A \Sigma A^{\top}\).
	\end{enumerate}
\end{lemma}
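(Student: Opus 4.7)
The plan is to prove part (i) directly via the Moore--Penrose pseudoinverse and then reduce part (ii) to part (i) by a Cholesky change of variables.

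For part (i), the sphere in question is $B_r = \{x \in \R^n : x^\top x \leq r^2\}$, and I want to identify the image $A B_r$. Since $A \in \R^{m \times n}$ has full row rank with $m \leq n$, the matrix $AA^\top \in \R^{m \times m}$ is symmetric positive definite and hence invertible. The key observation is that for a fixed $y \in \R^m$, the minimum-norm solution of $Ax = y$ is $x^* = A^\top (AA^\top)^{-1} y$, with squared norm $\|x^*\|^2 = y^\top (AA^\top)^{-1} y$. Thus $y \in A B_r$ iff there exists $x$ with $Ax = y$ and $\|x\|^2 \leq r^2$, which happens iff $y^\top (AA^\top)^{-1} y \leq r^2$. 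This gives $A B_r = \{y \in \R^m : y^\top (AA^\top)^{-1} y \leq r^2\}$, an ellipsoid centered at the origin with covariance $AA^\top$.

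For part (ii), I would first rewrite $P = x_0 + \{u \in \R^n : u^\top \Sigma^{-1} u \leq r^2\}$. Using the Cholesky factorization $\Sigma = L L^\top$ with $L$ invertible (here I use that $\Sigma$ is positive definite, which is implicit in the statement that the ellipsoid is full-dimensional), the substitution $u = Lz$ shows that the ellipsoid is the image of the ball $B_r \subset \R^n$ under the linear map $L$. Hence $A(P) = Ax_0 + (AL)(B_r)$. Now $AL \in \R^{m \times n}$, and since $L$ is invertible and $A$ has full row rank $m$, the product $AL$ also has full row rank $m$. Applying part (i) to $AL$ yields
\begin{equation*}
A(P) = Ax_0 + \{y \in \R^m : y^\top (AL(AL)^\top)^{-1} y \leq r^2\} = \{y \in \R^m : (y - Ax_0)^\top (A\Sigma A^\top)^{-1} (y - Ax_0) \leq r^2\},
\end{equation*}
using $AL(AL)^\top = A L L^\top A^\top = A\Sigma A^\top$. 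This is exactly the claimed ellipsoid.

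The only subtle point, and what I expect to be the main (mild) obstacle, is the pseudoinverse argument in part (i): one must check both inclusions, i.e., that the minimum-norm preimage actually achieves $\|x^*\|^2 = y^\top(AA^\top)^{-1}y$ and that no smaller-norm preimage exists. This is standard but worth stating explicitly, e.g., by decomposing any preimage as $x = x^* + w$ with $w \in \ker A$ and noting that $x^* \perp \ker A$ so that $\|x\|^2 = \|x^*\|^2 + \|w\|^2 \geq \|x^*\|^2$. Everything else is bookkeeping about invertibility and the preservation of full row rank under right-multiplication by an invertible matrix.
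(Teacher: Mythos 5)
Your proof is correct, and its overall architecture matches the paper's: identify the image of the unit ball under a full row rank map as the ellipsoid with covariance $AA^{\top}$, then reduce part (ii) to part (i) by factoring $\Sigma = MM^{\top}$ with $M$ invertible, writing $P$ as $x_0$ plus the image of a ball under $M$, and composing to get $AM$, which is again full row rank, so that the covariance becomes $AM(AM)^{\top} = A\Sigma A^{\top}$. The differences are in the mechanics. In part (i) the paper proves the two inclusions separately: the inclusion $AS \subseteq E$ is obtained by bounding $x^{\top}A^{\top}(AA^{\top})^{-1}Ax \le \|A^{\top}(AA^{\top})^{-1}A\|\,\|x\|^2$ and establishing $\|A^{\top}(AA^{\top})^{-1}A\| \le 1$ in a separate auxiliary lemma via the singular value decomposition, while the reverse inclusion uses the same pseudoinverse preimage $x = A^{\top}(AA^{\top})^{-1}y$ that you use. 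Your minimum-norm-preimage argument, with the decomposition $x = x^* + w$, $w \in \ker A$, $x^* \perp \ker A$, delivers both inclusions in one stroke and makes the auxiliary norm lemma unnecessary; it is shorter and more self-contained, at the cost of leaving the projection structure of $A^{\top}(AA^{\top})^{-1}A$ implicit rather than recording it as a reusable fact. In part (ii) you use the Cholesky factorization where the paper uses the spectral decomposition $\Sigma = SDS^{\top}$ with $B^{-1} = SD^{1/2}$; this choice is immaterial, since only $\Sigma = MM^{\top}$ with $M$ invertible is needed, and your observation that $AL$ inherits full row rank is exactly the point the paper also relies on.
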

\begin{proof}
	\emph{Part (i):}
	Consider a sphere \(S\) given by 
	\[
	S = \{x \mid x^{\top}x \leq \epsilon^2\}	
	\]
	Given a full row rank matrix \(A\), we want to show that the transformation of \(S\) by the matrix \(A\) is an ellipsoid given by 
	\[
	E = \{y \mid y^{\top}(A A^{\top})^{-1}y \leq \epsilon^2\}.	
	\]
	To do so, we need to show that \(A S \subseteq E\) and \(E \subseteq A S\). 

	\emph{First Case}.
	Consider a point \(x \in S\). We want to show that \(A x \in E\).
	To do so, we need to show that. 
	\[
	x^{\top}A^{\top}(A A^{\top})^{-1}A x \leq \epsilon^2.	
	\]
	We know that due to the Cauchy-Schwarz inequality 
	\[
		x^{\top}A^{\top}(A A^{\top})^{-1}A x \leq \|A^{\top}(A A^{\top})^{-1}A\| \|x\|_2^2	
	\]
	where the first norm is the matrix norm.
	Due to Lemma~\ref{lemma:norm_bound}, we observe that \(\|A^{\top}(A A^{\top})^{-1}A\| \leq 1\). 
	Thus
	\[
		x^{\top}A^{\top}(A A^{\top})^{-1}A x \leq \|x\|_2^2	\leq \epsilon^2,
	\]
	which shows that \(A S \subseteq E\). 
	
	\emph{Second Case}.
	Now, we want to show that \(E \subseteq A S\). For this, we consider \(y \in E\) and show that there exists a corresponding \(x \in S\) such that \(y = Ax\).
	We define \(x\) as \(A^{+}y\). Since \(A\) has linearly independent rows then \(A^{+} = A^{\top}(A A^{\top})^{-1}\).
	We can then show that 
	\[
	A x = A A^{+} y = y, 	
	\]
	because \(A^{+}\) will be a right inverse of \(A\). 

	We now show that \(x \in S\). 
	To do so, we need to show that \(x^{\top}x \leq \epsilon^2\).
	We can then write 
	\begin{align*}
		x^{\top}x &= (A^{+}y)^{\top}A^{+} y \\
		&= y^{\top}(A^{\top}(A A^{\top})^{-1})^{\top}A^{\top}(A A^{\top})^{-1} y\\
		&= y^{\top}((A A^{\top})^{-1})^{\top}A A^{\top}(A A^{\top})^{-1} y\\
		&= y^{\top}((A A^{\top})^{-1})^{\top} y\\
		&= y^{\top}(A A^{\top})^{-1} y\\
		&\leq \epsilon^2.
	\end{align*}
	Here, the second equality is due to the definition of the pseudo-inverse, the third due to the transpose, and the fourth because \(AA^{\top}(AA^{\top})^{-1} = I\). 
	The last equality is because \(A A^{\top}\) and its inverse would be symmetric. 
	Finally, the last inequality exists because \(y \in E\). 
	This concludes the proof. 

	\emph{Part (ii):}
	Let $P=\{x \in \mathbb{R}^n \; \vert \, (x-x_0)^T \Sigma^{-1} (x-x_0) \leq \epsilon^2 \}$ be an ellipsoid in $\mathbb{R}^n$ centered at $x_0 \in \mathbb{R}^n$ with a positive definite matrix $\Sigma \in \mathbb{R}^{n \times n}$.
	Let $A \in \mathbb{R}^{m \times n}$ be a full row rank matrix.

	To prove the result, we show that the ellipsoid \(P\) can be expressed as the translation and a linear transformation of a sphere.
	
	We perform a variable transformation $x' \coloneqq x-x_0$:
	\begin{equation*}
		P' = \{x' \in \mathbb{R}^n \; \vert \; x'^T \Sigma_x^{-1}x' \leq \epsilon^2\}
	\end{equation*}
	Then we can observe that \(P = \{x \;\vert \; x_0 + x',\; x' \in P'\}\).
	Here, \(P'\) is an ellipsoid centered at the origin. 

	Since $\Sigma$ is positive definite, there exists an orthogonal decomposition
	\begin{equation*}
		\Sigma = S D S^T
	\end{equation*}
	with $S^T S = 1_n$ and $D \in \mathbb{R}^{n \times n}$ diagonal.
	With the matrix $D^{-1/2}$ containing the inverse square roots of the diagonal entries, we calculate:
	\begin{equation*}
		x'^T \Sigma^{-1} x' = x'^T S D^{-1} S^T x' = x'^T S D^{-1/2} D^{-1/2} S^T x' = (D^{-1/2} S^T x')^T (D^{-1/2} S^T x').
	\end{equation*}
	We now define
	\begin{equation*}
		B \coloneqq D^{-1/2} S^T \text{ and} \quad x'' \coloneqq Bx',
	\end{equation*}
	which allows us to write
	\begin{equation*}
		x'^T \Sigma_x x' = (Bx')^T(Bx') = x''^T x''
	\end{equation*}
	This  we can define the set \(P''\) as 
	\begin{equation*}
		P'' \; \coloneqq \; \{x'' \in \mathbb{R}^n \vert x'' = Bx', \; x \in P' \}.
	\end{equation*}
	Due to the positive definiteness of $\Sigma$, we can invert $B$:
	\begin{equation*}
		x' = B^{-1}x'' = S D^{1/2} x''.
	\end{equation*}
	This shows that 
	\[
	P' = B^{-1} P''.	
	\]

	This, combined with our earlier result, allows us to write 
	\[
	P = \{x \;\vert\; x = x_0 + B^{-1}x'',\; x'' \in P''\}.
	\] 
	More generally, we can state
	\[P = x_0 + P' = x_0 + B^{-1}P''\].

	Now, we consider the linear transformation of \(P\).
	We then have 
	\[
	A P = \{y \;\vert\; y = A x_0 + A B^{-1}x'',\; x'' \in P''\}.	
	\]

	We define \(A' = A B^{-1}\). Since \(A\) is full row rank and \(B^{-1}\) is invertible, \(A B^{-1}\) has full row rank.
	Then, we can write
	\[
	A P = \{y \;\vert\; y = A x_0 + A' x'',\; x'' \in P''\}.	
	\]

	Due to Lemma~\ref{lemma:ellipsoid_transformation} and since \(P''\) is a sphere, the transformation of the sphere \(P''\) by a full row rank matrix \(A'\) is an ellipsoid with covariance matrix \(A' (A')^{\top}\).
	We have 
	\begin{align*}
		A' (A')^{\top} &= A B^{-1} (B^{-1})^{\top}A^{\top}\\
		&= A S D^{1/2} (S D^{1/2})^{\top}A^{\top}\\
		&= A S D^{1/2}  (D^{1/2})^{\top}S^{\top} A^{\top}\\
		&= A S D S^{\top} A^{\top}\\
		&= A \Sigma A^{\top}
	\end{align*}.

	Thus the linear transformation of \(P\) by a full row rank matrix \(A\) is an ellipsoid with center \(A x_0\) and covariance matrix \(A \Sigma A^{\top}\).
\end{proof}

\begin{lemma}
	\label{lemma:norm_bound}
	For any full row rank matrix \(A\), we have that \(\|A^{\top}(A A^{\top})^{-1}A\| \leq 1\) where \(\|A\| = \sup_{\|x\|_2 = 1} \|A x\|_2\). 
\end{lemma}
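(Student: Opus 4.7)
The plan is to prove this by recognizing that $P := A^\top(AA^\top)^{-1}A$ is the orthogonal projection onto the row space of $A$. Once this is established, the norm bound follows immediately from the general fact that every orthogonal projection has operator 2-norm at most 1.

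First, I would verify symmetry: since $AA^\top$ is symmetric and invertible (the latter by full row rank), so is $(AA^\top)^{-1}$, and hence $P^\top = A^\top((AA^\top)^{-1})^\top A = A^\top(AA^\top)^{-1}A = P$. Next, I would verify idempotency by a direct calculation,
\begin{equation*}
P^2 = A^\top(AA^\top)^{-1}\bigl(AA^\top\bigr)(AA^\top)^{-1}A = A^\top(AA^\top)^{-1}A = P.
\end{equation*}
Together these two properties characterize $P$ as an orthogonal projection onto its image.

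To conclude, for any $x \in \mathbb{R}^n$ decompose $x = Px + (I-P)x$. Since $P$ is an orthogonal projection, the two summands are orthogonal, so by the Pythagorean identity $\|x\|_2^2 = \|Px\|_2^2 + \|(I-P)x\|_2^2 \geq \|Px\|_2^2$. Taking the supremum over unit vectors $x$ yields $\|P\| = \sup_{\|x\|_2=1}\|Px\|_2 \leq 1$, completing the proof. There is no real obstacle here; the only subtle point is recalling why $AA^\top$ is invertible, which is precisely the full row rank hypothesis on $A$.
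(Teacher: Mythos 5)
Your proof is correct, but it takes a different route from the paper. The paper argues via the singular value decomposition: writing \(A = U\Sigma V^\top\), it computes \((AA^\top)^{-1} = U\Delta^{-2}U^\top\) and simplifies \(A^\top(AA^\top)^{-1}A\) to \(V\bigl[\begin{smallmatrix} I_m & 0\\ 0 & 0\end{smallmatrix}\bigr]V^\top\), then bounds the norm by the product of the norms of the three factors (which it attributes to Cauchy--Schwarz, though it is really submultiplicativity of the operator norm). You instead identify \(P = A^\top(AA^\top)^{-1}A\) as an orthogonal projection directly, by checking symmetry and idempotency, and conclude \(\|P\|\leq 1\) from the Pythagorean identity. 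Your argument is more elementary — it avoids the SVD entirely and needs only the invertibility of \(AA^\top\), which you correctly tie to the full row rank hypothesis — while the paper's SVD computation gives slightly more information as a by-product (it exhibits the eigenvalues of \(P\) as exactly \(0\) and \(1\), hence in fact \(\|P\| = 1\) whenever \(m \geq 1\)). Both establish the stated bound; yours is arguably the cleaner proof of precisely what the lemma claims.
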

\begin{proof}
	Using the singular value decomposition of \(A\), we can write \(A = U \Sigma V^{\top}\). 
	Here \(U \in \mathbb{R}^{m \times m}\) and \(V \in \mathbb{R}^{n \times n}\) are orthogonal matrices and \(\Sigma \in \mathbb{R}^{m \times n}\) is a rectangular diagonal matrix of the form \([\Delta, 0]\) where \(\Delta \in \mathbb{R}^{m \times m}\) is square matrix with non-zero diagonals. 
	
	We start by considering \(A A^{\top})^{-1}\). 
	Then, we can write 
	\begin{align*}
		(A A^{\top})^{-1} &= (U \Sigma V^{\top}V \Sigma^{\top} U^{\top})^{-1}\\
		&= (U \Sigma \Sigma^{\top} U^{\top})^{-1}\\
		&= (U \Delta^2 U^{\top})^{-1}\\
		&= (U^{\top})^{-1} \Delta^{-2} U^{-1}\\
		&= U \Delta^{-2} U^{T}.
	\end{align*}
	Here, the second equality is because \(V\) is orthogonal, and the third is due to the structure of \(\Sigma\).
	We can then write 
	\begin{align*}
		A^{\top}(A A^{\top})^{-1}A &= V \Sigma^{\top} U^{\top} U \Delta^{-2} U^{T} U \Sigma V^{\top}\\
		&= V \Sigma^{\top} \Delta^{-2} \Sigma V^{\top}\\
		&= V \begin{bmatrix} I_m & 0 \\ 0 & 0 \end{bmatrix} V^{\top}.
	\end{align*}
	Observe that each of the three matrices has a maximum eigenvalue of \(1\) as \(V\) is orthogonal and because the middle matrix is a diagonal matrix with elements \(0\) or \(1\). 
	Thus, due to the Cauchy-Schwarz inequality
	\[
	\|A^{\top}(A A^{\top})^{-1}A\| \leq \|V\|\|\begin{bmatrix} I_m & 0 \\ 0 & 0 \end{bmatrix}\|\|V^{\top}\| \leq 1.
	\]
	This concludes the proof.
\end{proof}

\subsection*{Numerical Experiments}

\begin{table}[hbt]
	\centering
	\begin{tabular}{lrrrrr}
		\toprule
		Name &  \multicolumn{4}{c}{Variables} & Constraints \\
		\midrule
		& All & Bin. & Int. & Cont. & \\ \midrule
		10teams & 2025 & 1800 & 0 & 225 & 230 \\
		app2-2& 1226 & 1226 & 0 & 0 & 335 \\
		exp-1-500-5-5 & 990 & 250 & 0 & 740 & 550 \\
		flugpl& 18 & 0& 11& 7 & 18 \\
		g200x740 & 1480 & 740 & 0 & 740 & 940 \\
		gen-ip054 & 30 & 0 & 30 & 0 & 27 \\
		gr4x6 & 48 & 24 & 0 & 24 & 34 \\
		iis-glass-cov & 214 & 214 & 0 & 0 & 5375 \\
		k16x240b & 480 & 240 & 0 & 240 & 256 \\
		mtest4ma & 1950 & 975 & 0 & 975 & 1174 \\
		neos-860300 & 1385 & 1384 & 0 & 1 & 850 \\
		neos5 & 63 & 53 & 0 & 10 & 63 \\
		p0201 & 201 & 201 & 0 & 0 & 133 \\
		r50x360 & 720 & 360 & 0 & 360 & 410 \\
		ran12x21 & 504 & 252 & 0 & 252 & 285 \\
		ran13x13 & 338 & 169 & 0 & 169 & 195 \\
		seymour1 & 1372 & 451 & 0 & 921 & 4944 \\
		\bottomrule
	\end{tabular}
	\caption{\label{Table:overview_mip}Overview of MIP instances. Each problem was solved for 10 seeds \(\times\) 3 $s_{inc}$ \(\times\) 29 scenario reductions = 870 times for both ambiguity sets, i.e. box and $\ell_2$.}
\end{table}

\begin{figure}[htb]
	\centering
	\includegraphics[width=0.7\linewidth]{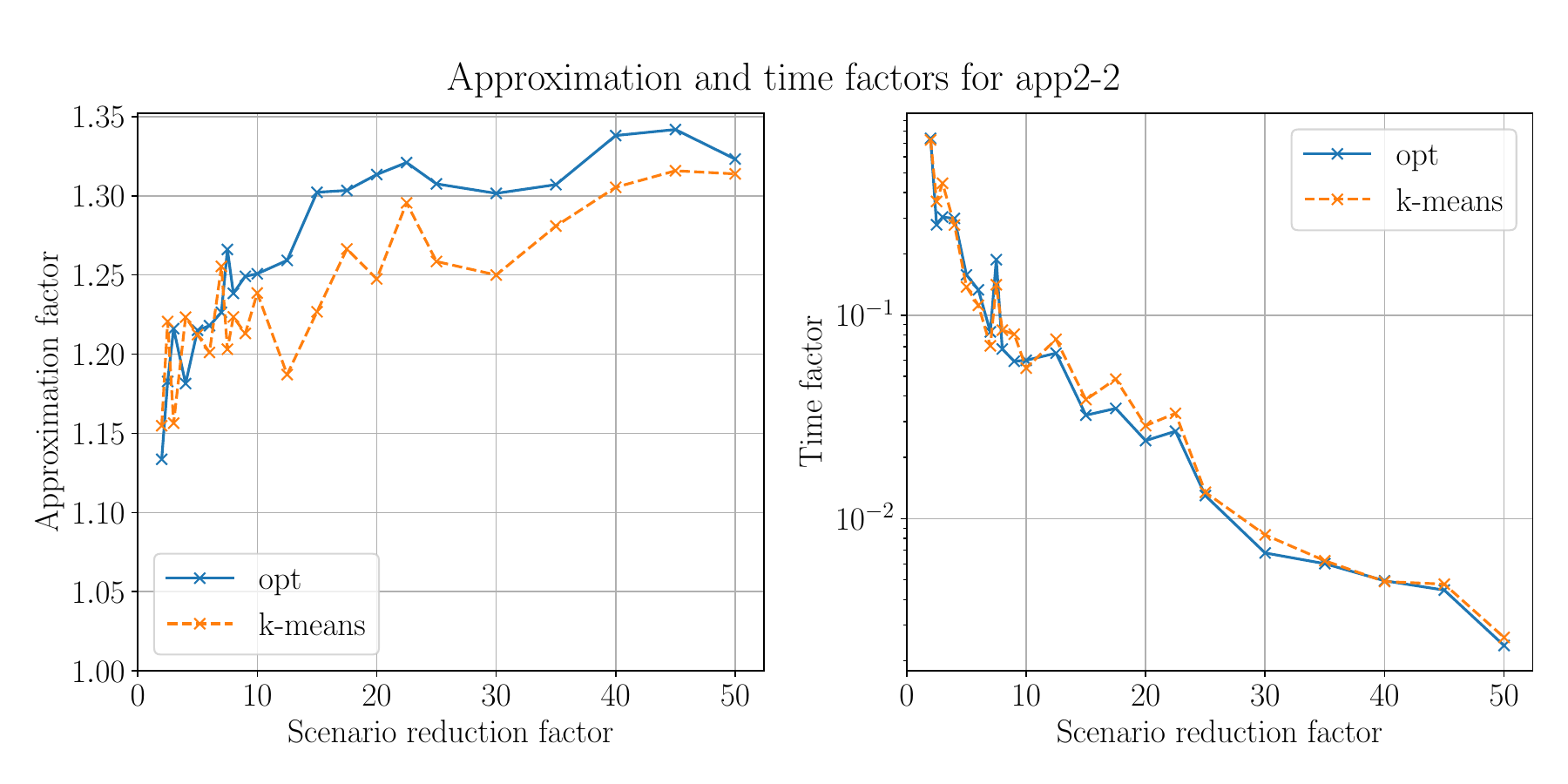}
	\caption{The mean approximation factor (left) and time factor (right) in dependence of the scenario reduction factor for $N=0$ samples for the two different clustering algorithms for the instance \emph{app2-2} and for $\ell_2$-ambiguity sets.}
\end{figure}

\begin{figure}[htb]
	\centering
	\includegraphics[width=0.7\linewidth]{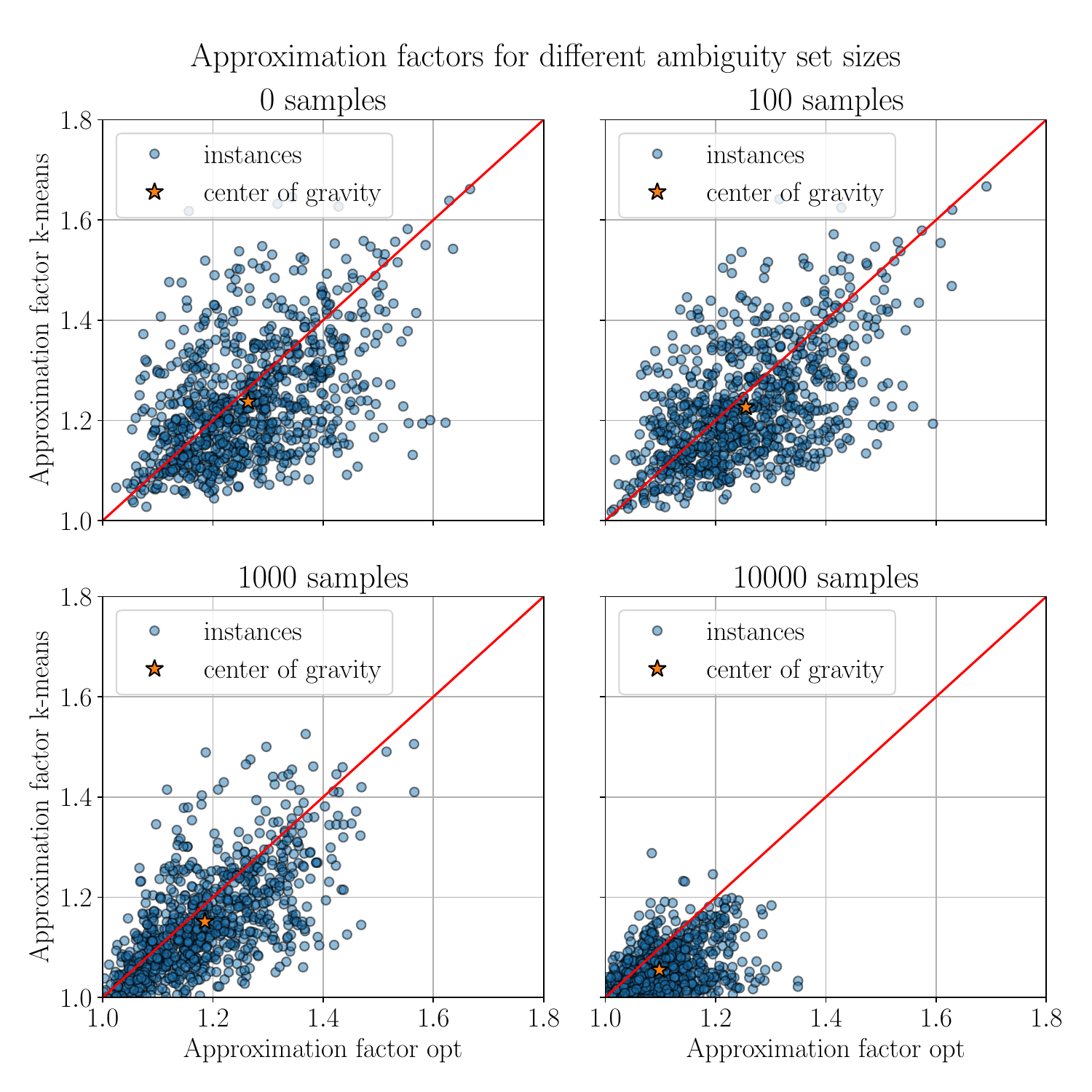}
	\caption{Scatter plot of approximation factors for \emph{opt} and \emph{k-means} for the instance \emph{app2-2} with varying numbers of samples. The red line represents the diagonal through the origin, while the orange star marks the center of gravity of all displayed instances and for $\ell_2$-ambiguity sets.}
\end{figure}
\begin{figure}[htb]
	\centering
	\includegraphics[width=0.7\textwidth]{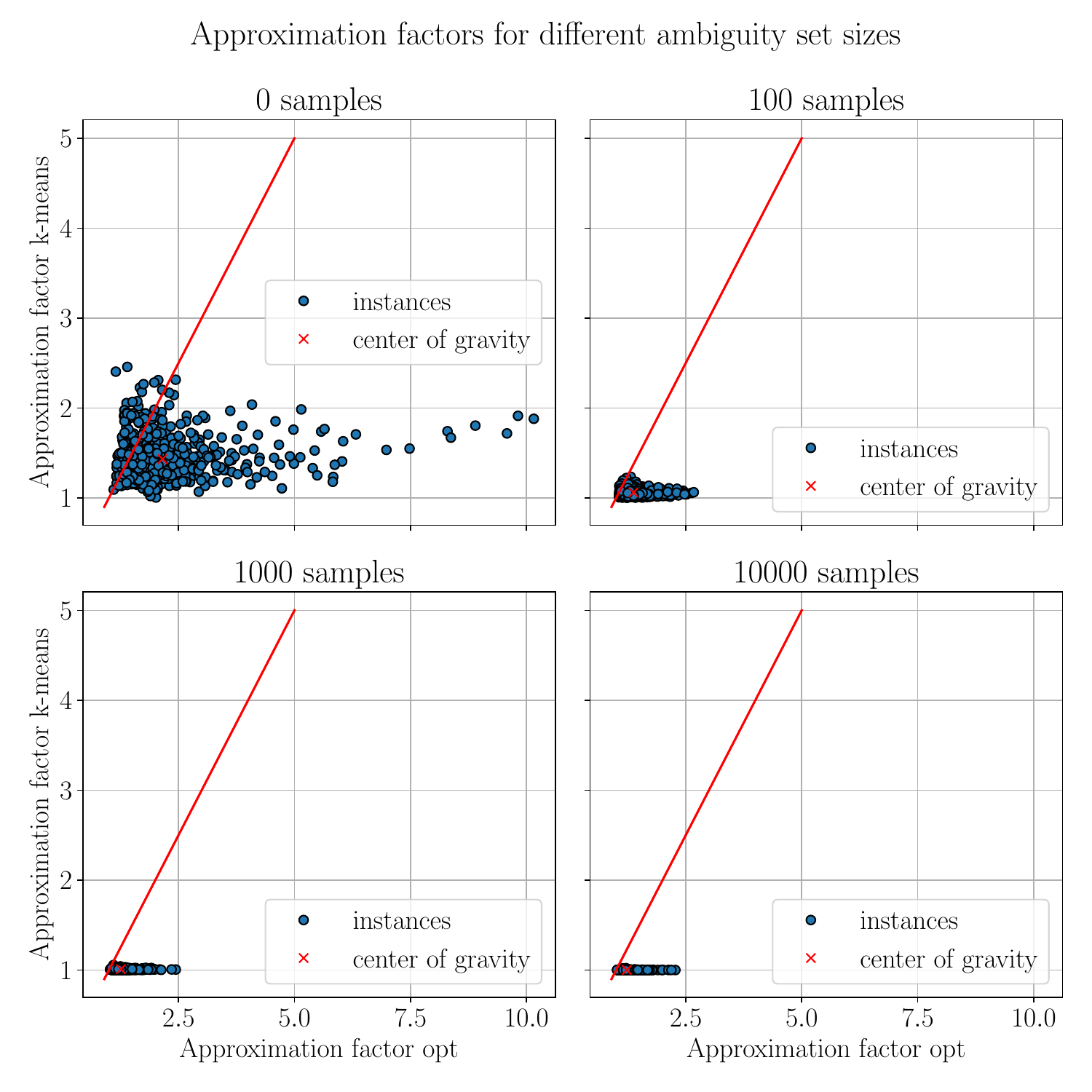}
	\caption{\label{fig:AF_portfolio}Comparison of the realized approximation factors for \emph{opt} and \emph{k-means} for all 800 portfolio instances for different number of samples.
		The red line represents the diagonal where $AF_{opt}=AF_{k-means}$ and the red cross marks the center of gravity of all data points.
	}
\end{figure}

\begin{figure}
	\centering
	\includegraphics[width=0.7\linewidth]{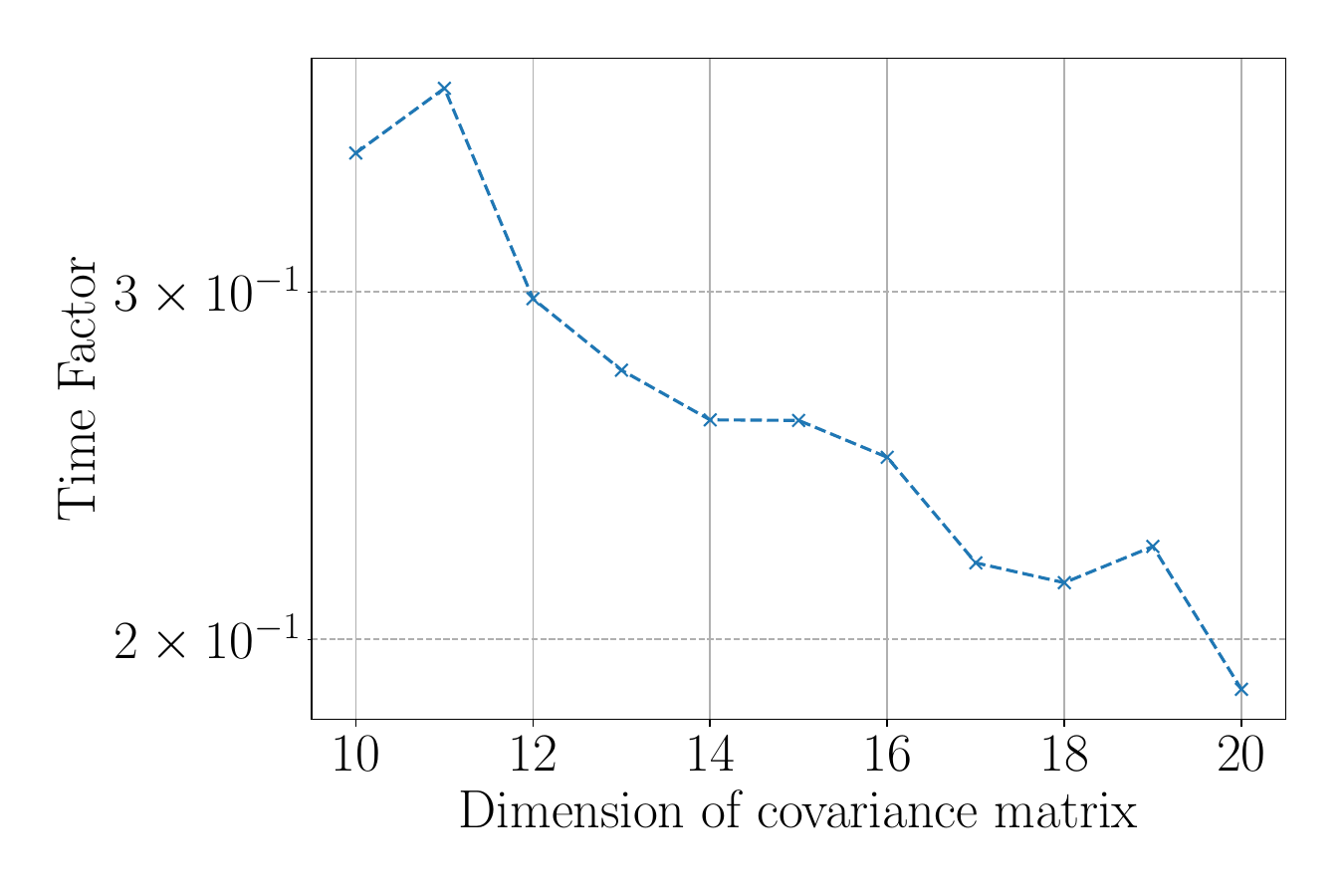}
	\caption{The mean time factor as a function of the dimension of the covariance matrix for all tested instances of portfolio optimization.
	}
\end{figure}

\begin{figure}[htb]
	\centering
	\includegraphics[width=0.7\linewidth]{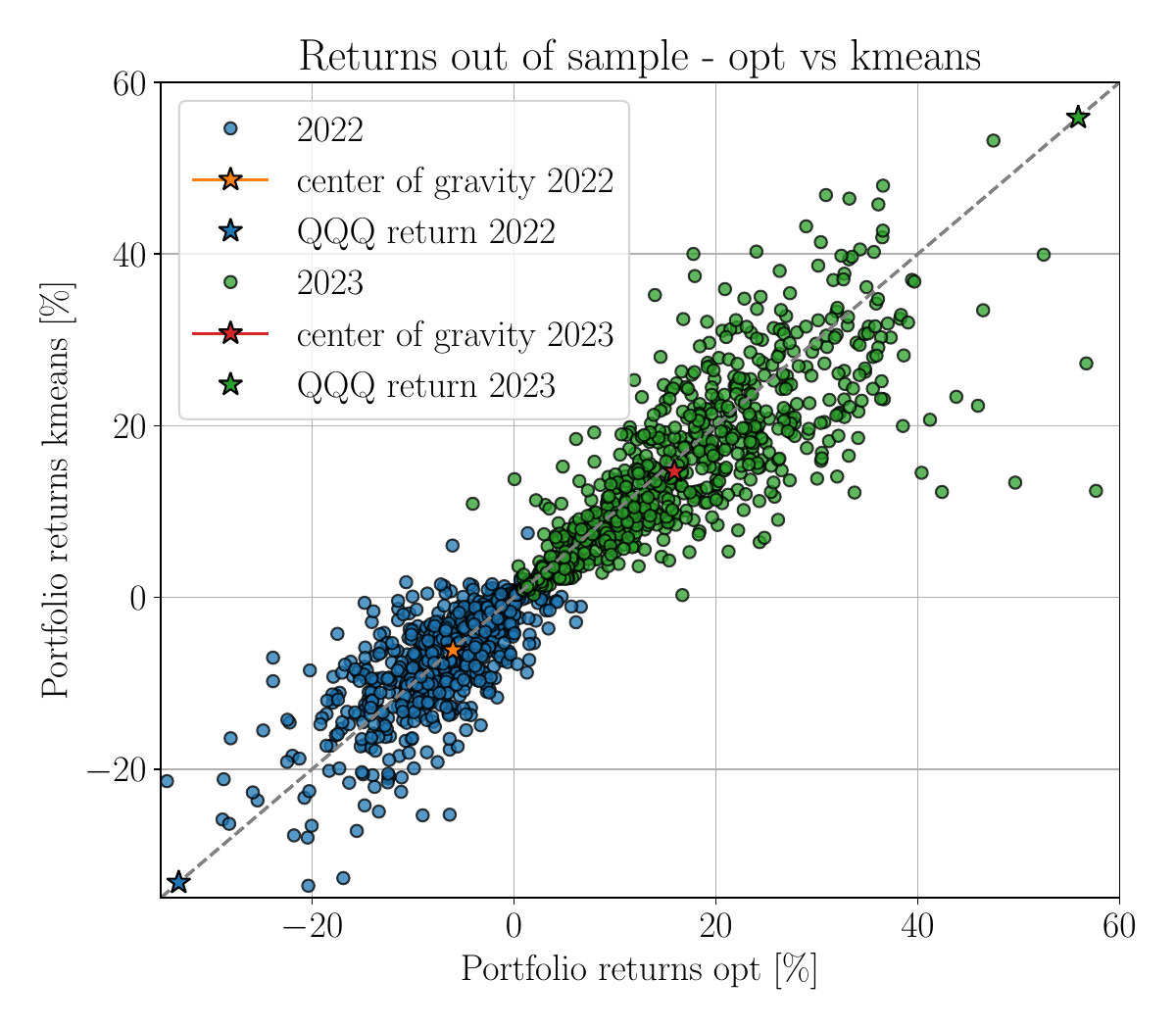}
\caption{Out of sample returns of the portfolio for opt and k-means for the years 2022 (blue) and 2023 (green).
	The blue and green stars indicate the returns of the NASDAQ-100 in the respective years.
	The orange and red stars indicate the centers of gravity of portfolio returns for the years 2022 and 2023, respectively.}
\end{figure}



\end{document}